\theoremstyle{plain}
\newtheorem{theorem}{Theorem}[section]
\newtheorem{proposition}[theorem]{Proposition}
\newtheorem{lemma}[theorem]{Lemma}
\newtheorem{corollary}[theorem]{Corollary}
\theoremstyle{remark}
\newtheorem{remark}[theorem]{Remark}
\theoremstyle{definition}
\newtheorem{definition}[theorem]{Definition}
\newtheorem{claim}[theorem]{Claim}
\theoremstyle{plain}
\newtheorem{maintheorem}{Theorem}
\numberwithin{equation}{section}
\renewcommand*{\backref}[1]{}
\renewcommand*{\backrefalt}[4]{%
	\ifcase #1 (Not cited.)%
	\or        (Cited on page~#2.)%
	\else      (Cited on pages~#2.)%
	\fi}
\address[W. Li]{College of Mathematics, 
                Sichuan University, 
                Chengdu, Sichuan 610065, P.R. China}
\email{lwc@scu.edu.cn}
\address[Y. Shi]{College of Mathematics, 
                Sichuan University, 
                Chengdu, Sichuan 610065, P.R. China}
\email{shiyi@scu.edu.cn}
\address[M. Xia]{School of Mathematical Sciences,
                Dalian University of Technology,
                Dalian, Niaoning 116024, P.R. China}
\email{xiamingyang@dlut.edu.cn}
\title[Robust Transitivity]{Robust Transitivity of Partially Hyperbolic Diffeomorphisms with Interval Central Leaves}
\author[W. Li, Y. Shi and M. Xia]{Wenchao LI, Yi SHI and Mingyang XIA}
\date{\today}
\subjclass[2020]
{Primary: 37D30;     
 Secondary: 37D05,   
            37E05.   
}
\keywords{partially hyperbolic diffeomorphism, robust transitivity, nilmanifold, intermingled basins.}
\begin{document}
\begin{CJK}{UTF8}{gbsn}

\begin{abstract}
	\begin{sloppypar}
	For a boundary-preserving partially hyperbolic diffeomorphism with interval central leaves, we completely characterize the $C^k$-robust transitivity $(k\geq 2)$ by boundary interconnection. As an application, if the boundary SRB measures admit negative central Lyapunov exponents, then boundary interconnection also completely characterizes the phenomenon of robustly intermingled basins for boundary SRB measures.
    \end{sloppypar}
\end{abstract}

\maketitle

\section{Introduction}\label{section: introduction}
As one of the most studied topological properties of dynamical systems, transitivity has been a cornerstone of chaotic behavior. The transitivity of a dynamical system implies the indecomposability of the system.
Robust transitivity requires the system to remain transitive under small perturbations. Since we can never measure the system with infinite precision in the physical world, robust transitivity produces a stable and observable phenomenon of chaos.

The simplest and most well known systems with robust transitivity are transitive Anosov diffeomorphisms. They are robustly transitive due to structural stability \cite{Anosov1967}. Although the question remains widely open as to which manifold can support an Anosov diffeomorphism and whether Anosov diffeomorphisms are all transitive \cite{Smale1967}, all of the discovered Anosov diffeomorphisms are transitive and supported on infra-nilmanifolds \cite{Ma74}, which are finitely covered by nilmanifolds.

It then becomes a question that whether robust transitivity is completely characterized by hyperbolicity, in a uniform sense or a weaker sense. In 1971, Shub \cite{Sh71} constructed examples of robustly transitive diffeomorphisms with partial hyperbolicity on $\mathbb{T}^4$. In 1978, Ma\~n\'e \cite{Mn78} constructed robustly transitive examples on $\mathbb{T}^3$ with partial hyperbolicity. In 1996, Bonatti and D\'iaz \cite{BD96} introduced the powerful mechanism of blender to construct robustly transitive systems from perturbations. In 2000, Bonatti and Viana \cite{BV00} produced further examples that are not partially hyperbolic but admit a dominated splitting. These examples indicate that generally robust transitivity does not imply uniform hyperbolicity, or even partial hyperbolicity.

However, robust transitivity indeed implies some hyperbolic behavior of the dynamical system. In 1982, Ma\~n\'e \cite{Mane1982} proved that $C^1$-robust transitivity implies uniform hyperbolicity in the case of surface diffeomorphisms. In 1999, D\'\i az, Pujals and Ures \cite{DPU99} showed that $C^1$-robust transitivity implies partial hyperbolicity in dimension three. Finally, Bonatti, D\'iaz and Pujals \cite{BDP03} proved that $C^1$-robust transitivity diffeomorphisms on closed manifolds must be volume partially hyperbolic.

In this paper, we give a complete characterization of $C^k$-robust transitivity ($k\geq2$) for the simplest partially hyperbolic diffeomorphisms, i.e. the interval extensions of Anosov diffeomorphisms on nilmanifolds. 

Let $N$ be a nilmanifold and $M = N\times [0, 1]$ be the thickened nilmanifold. Note that $\partial M$ is a disjoint union of $M_0 := N\times\{0\}$ and $M_1 := N\times\{1\}$. Let $\mathrm{Diff}_\partial^k(M)$ be the collection of $C^k$ diffeomorphisms on $M$ preserving $M_0$ and $M_1$ respectively, equipped with the usual $C^k$ topology.

\begin{definition}\label{definition: RT}
    We say that $F\in\mathrm{Diff}_\partial^k(M)$ is \textit{$C^k$-robustly transitive}, if there exists an open neighborhood $\mathcal{U}\subseteq \mathrm{Diff}_\partial^k(M)$ of $F$ such that every $G\in\mathcal{U}$ is transitive.
\end{definition}

The study of boundary-preserving systems has its own interests. Pac\'\i fico \cite{Pac84} studied the structral stability of vector fields on $3$-manifolds with boundary. Ilyashenko \cite{Il11} also studied the quasi-open set in the space of boundary-preserving diffeomorphisms of $\mathbb{T}^2\times[0,1]$, formed by the maps exhibiting the different types of thick attractors which is almost topologically mixing.

\begin{definition}\label{definition: PHI}
    We say that $F\in\mathrm{Diff}_\partial^k(M)$ is a \textit{partially hyperbolic diffeomorphism with interval central leaves}, if $F$ satisfies the following properties:
    \begin{itemize}
        \item $F$ is \textit{partially hyperbolic}, i.e. there exists a continuous $Df$-invariant splitting $TM = E^s\oplus E^c \oplus E^u$ such that for any $x\in M$,
        \begin{align*}
	       &\|Df|_{E^s(x)}\| < 1 < m(Df|_{E^u(x)}), \\
	       &\|Df|_{E^s(x)}\| < m(Df|_{E^c(x)}) \leq \|Df|_{E^c(x)}\|< m(Df|_{E^u(x)}).
        \end{align*}
        As a result, there are two $f$-invariant foliations, the stable foliation $\mathcal{F}^s$ and the unstable foliation $\mathcal{F}^u$, tangent to $E^s$ and $E^u$ respectively.
        \item $F$ is \textit{dynamically coherent}, i.e. there are two $f$-invariant foliations, the center-stable foliation $\mathcal{F}^{cs}$ and the center-unstable foliation $\mathcal{F}^{cu}$, tangent to $E^{cs} := E^c\oplus E^s$ and $E^{cu} := E^c\oplus E^u$ respectively. As a result, their intersection, the central foliation $\mathcal{F}^c$, is an $F$-invariant foliation tangent to $E^c$.
        \item Each central leaf $\mathcal{F}^c(\cdot)$ is an interval that intersects transversely with $M_0$ and $M_1$ exactly at its endpoints.
    \end{itemize}
    The collection of partially hyperbolic diffeomorphisms with interval central leaves in $\mathrm{Diff}_\partial^k(M)$ will be denoted by $\mathrm{PHI}^k(M)$.
\end{definition}

\begin{remark}\label{remark: PHI}
    Since $E^c$ is uniformly transverse to the boundary $M_0$ and $M_1$, $\mathrm{PHI}^k(M)$ is open in $\mathrm{Diff}_\partial^k(M)$ \cite{HPS77}. For $F\in\mathrm{PHI}^k(M)$ and $x\in M_i\ (i = 0,\ 1)$, since $M_i$ is $F$-invariant and transverse to $E^c(x)$, we have 
    $$
    T_xM = T_xM_i\oplus E^c(x),
    \qquad \text{where} \qquad
    T_xM_i=E^s(x)\oplus E^u(x).
    $$ 
    In particular, $F_i := F|_{M_i}:M_i\to M_i$ is an Anosov diffeomorphism. Let $\pi_i: M \to M_i$ be the natural projection along central leaves, defined by $\pi_i(x) := \mathcal{F}^c(x)\pitchfork M_i$. Then we have $\pi_i\circ F = F_i\circ\pi_i$. In particular, we have $\pi_0|_{M_1}\circ F_1 = F_0\circ \pi_0|_{M_1}$, where $\pi_0|_{M_1}: M_1 \to M_0$ is a homeomorphism. Therefore, $F_0$ and $F_1$ are topologically conjugated.
\end{remark}

\begin{definition}\label{definition: BI}
    We say that $F\in\mathrm{PHI}^k(M)$ is \textit{boundary interconnected}, if there exist two pairs of periodic points $p_i,\ q_i\in M_i\ (i = 0,\ 1)$ such that
    \begin{align*}
        \lambda^c(p_0) < 0 < \lambda^c(p_1) &\text{ and } W^s(p_0)\cap W^u(p_1)\neq\varnothing;\\
        \lambda^c(q_1) < 0 < \lambda^c(q_0) &\text{ and } W^s(q_1)\cap W^u(q_0)\neq\varnothing.
    \end{align*}
    Here $\lambda^c(\cdot)$ is the central Lyapunov exponent, and $W^s(\cdot),\ W^u(\cdot)$ are the stable and unstable sets respectively.
\end{definition}

\begin{remark}\label{remark: BI}
    In fact, $W^s(p_0)$ is an open submanifold of $\mathcal{F}^{cs}(p_0)$, and $W^u(p_1)$ is an open submanifold of $\mathcal{F}^{cu}(p_1)$, see Lemma \ref{lemma: W^s and W^u}. Hence $\dim W^s(p_0) + \dim W^u(p_1) = \dim M + 1$, and their intersection $W^s(p_0)\pitchfork W^u(p_1)$ consists of central intervals. Similarly, $W^s(q_1)\pitchfork W^u(q_0)$ also consists of central intervals.
\end{remark}

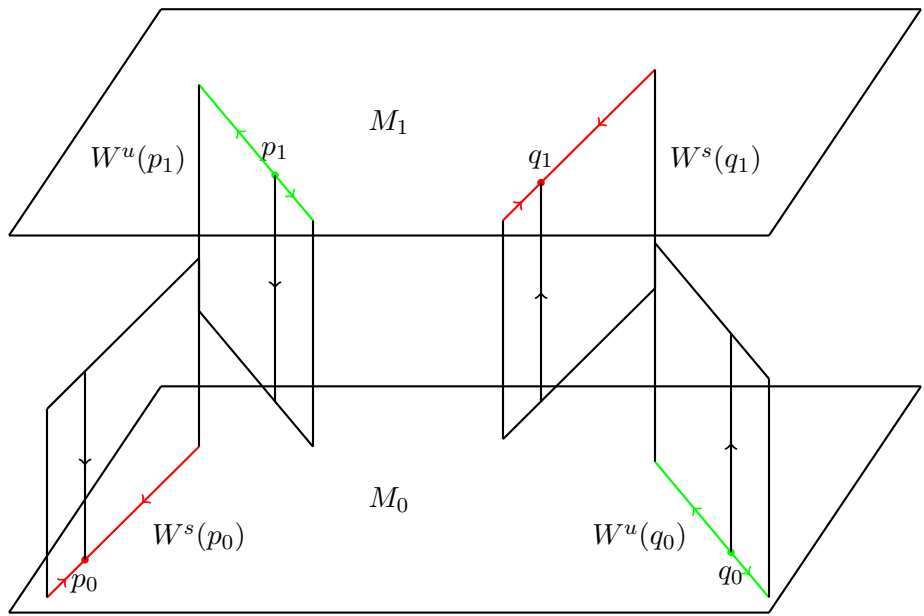
\begin{figure}[htbp]
	\centering
    \begin{tikzpicture}[mid arrow/.style = {decoration = {markings, mark = at position 0.5 with {\arrow{>}}}, postaction = {decorate}}]
        \draw [thick] (0,0)--(10,0);
        \draw [thick] (0,0)--(2,3);
        \draw [thick] (10,0)--(12,3);
        \draw [thick] (2,3)--(12,3);
        \node [label = center: $M_0$] at (5,1.5) {};
        
        \draw [thick] (0,5)--(10,5);
        \draw [thick] (0,5)--(2,8);
        \draw [thick] (10,5)--(12,8);
        \draw [thick] (2,8)--(12,8);
        \node [label = center: $M_1$] at (5,6.5) {};
        
        \node [circle, fill = red, inner sep = 1pt, label = below: $p_0$] at (1,0.7) {};
        \draw [thick, mid arrow, color = red](0.5,0.2)--(1,0.7);
        \draw [thick, mid arrow, color = red](2.5,2.2)--(1,0.7);
        \draw [thick] (0.5,0.2)--(0.5,2.7);
        \draw [thick] (2.5,2.2)--(2.5,4.7);
        \draw [thick] (0.5,2.7)--(2.5,4.7);
        \draw [thick, mid arrow](1,3.2)--(1,0.7);
        \node [label = center: $W^s(p_0)$] at (2.5,1) {};
        
        \node [circle, fill = green, inner sep = 1pt, label = above: $p_1$] at (3.5,5.8) {};
        \draw [thick, mid arrow, color = green](3.5,5.8)--(4,5.2);
        \draw [thick, mid arrow, color = green](3.5,5.8)--(2.5,7);
        \draw [thick] (2.5,7)--(2.5,4);
        \draw [thick] (4,5.2)--(4,2.2);
        \draw [thick] (2.5,4)--(4,2.2);
        \draw [thick, mid arrow] (3.5,5.8)--(3.5,2.8);
        \node [label = center: $W^u(p_1)$] at (1.7,6) {};
        
        \node [circle, fill = green, inner sep = 1pt, label = below: $q_0$] at (9.5,0.8) {};
        \draw [thick, mid arrow, color = green](9.5,0.8)--(10,0.2);
        \draw [thick, mid arrow, color = green](9.5,0.8)--(8.5,2);
        \draw [thick] (8.5,4.9)--(8.5,2);
        \draw [thick] (10,3.1)--(10,0.2);
        \draw [thick] (8.5,4.9)--(10,3.1);
        \draw [thick, mid arrow] (9.5,0.8)--(9.5,3.7);
        \node [label = center: $W^u(q_0)$] at (8.3,1) {};
        
        \node [circle, fill = red, inner sep = 1pt, label = above: $q_1$] at (7,5.7) {};
        \draw [thick, mid arrow, color = red](6.5,5.2)--(7,5.7);
        \draw [thick, mid arrow, color = red](8.5,7.2)--(7,5.7);
        \draw [thick] (6.5,2.3)--(6.5,5.2);
        \draw [thick] (8.5,4.3)--(8.5,7.2);
        \draw [thick] (6.5,2.3)--(8.5,4.3);
        \draw [thick, mid arrow](7,2.8)--(7,5.7);
        \node [label = center: $W^s(q_1)$] at (9.3,6) {};
    \end{tikzpicture}
	\caption{Boundary Interconnection}
	\label{figure: BI}
\end{figure}

\subsection{Robust Transitivity}
In this paper, we obtain the following characterization of robust transitivity for partially hyperbolic diffeomorphisms with interval central leaves.

\begin{maintheorem}\label{theorem: main theorem A}
	For $F\in\mathrm{PHI}^k(M)\ (k \geq 2)$, the following statements are equivalent:
	\begin{enumerate}
	    \item $F$ is $C^k$-robustly transitive;
        \item $F$ is boundary interconnected.
	\end{enumerate}
\end{maintheorem}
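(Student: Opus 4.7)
I plan to prove the two implications separately. For $(2)\Rightarrow(1)$, the boundary-interconnection condition is $C^k$-open: each of the periodic points $p_i, q_i$ is hyperbolic for $F$ (Anosov-hyperbolic inside $M_i$ together with a nonzero central Lyapunov exponent), so it persists under $C^k$ perturbation with continuous central exponent; by the dimension count of Remark~\ref{remark: BI}, the heteroclinic intersections $W^u(p_1)\cap W^s(p_0)$ and $W^u(q_0)\cap W^s(q_1)$ are transverse in $M$ and thus also persist. Hence it suffices to show that any boundary-interconnected $F$ is already transitive.

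To prove transitivity, my strategy is to show $\overline{W^u(p_1) \cup W^u(q_0)} = M$, from which transitivity follows via the growth of strong unstable disks under forward iteration. Applying the inclination lemma at $p_0$ to a $cu$-disk inside $W^u(p_1)$ at a point of the transverse intersection $W^u(p_1)\pitchfork W^s(p_0)$ -- and using the $F$-invariance of $W^u(p_1)$ -- yields $\overline{W^u(p_1)} \supseteq W^u(p_0)$; since $F_0$ is transitive Anosov, $W^u(p_0)$ is dense in $M_0$, so $M_0 \subseteq \overline{W^u(p_1)}$. Because $W^u(p_1)$ is an open $cu$-submanifold whose restriction to $M_1$ equals $W^u_{F_1}(p_1)$, dense in $M_1$, we also get $M_1 \subseteq \overline{W^u(p_1)}$, and the symmetric $(q_0,q_1)$-cycle provides the parallel conclusion for $W^u(q_0)$. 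The remaining step -- upgrading density near $M_0\cup M_1$ to density throughout the interior -- exploits the fact that every central leaf is a short interval joining $M_0$ to $M_1$, so that a cu-saturated set whose closure meets both boundaries densely must fill $M$; from here, transitivity follows from the fact that any small strong unstable disk $D\subset M$ has forward iterates growing into a cu-like set that therefore visits every nonempty open $V$.

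For $(1)\Rightarrow(2)$ I argue by contraposition. Assume $F$ is not boundary interconnected; without loss of generality, no pair $(p_0,p_1)$ from Definition~\ref{definition: BI} exists (the other case being symmetric), so that $W^u(p_1)\cap W^s(p_0)=\varnothing$ for every admissible pair. I will construct a $C^k$-small perturbation $G\in\mathrm{PHI}^k(M)$ admitting a proper closed forward-invariant set -- hence non-transitive -- by three steps: (i) a Kupka--Smale-type perturbation in $\mathrm{PHI}^k(M)$ makes all periodic points hyperbolic and all invariant-manifold intersections generic; (ii) the absence of heteroclinic connections allows us to exhibit disjoint $G$-invariant neighborhoods $V_0\supset M_0$ and $V_1\supset M_1$, separated by a gap and bounded by pieces of $cs$- and $cu$-manifolds; (iii) a further perturbation supported in the gap bends the central foliation so that $V_0$ becomes strictly $G$-forward invariant while keeping $G \in\mathrm{PHI}^k(M)$. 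The main obstacle is step (iii): producing a perturbation that installs a strict filtration without destroying partial hyperbolicity, the boundary-preserving property, or the interval-central-leaf structure requires a $C^k$ ($k\geq 2$) central connecting-type argument tailored to $\mathrm{PHI}^k(M)$, which is strictly more delicate than the standard $C^1$ Hayashi connecting lemma, together with careful control of the central Lyapunov exponents throughout the isotopy to ensure that the perturbed map remains in $\mathrm{PHI}^k(M)$.
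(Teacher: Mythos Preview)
Your proposal has a genuine gap in $(2)\Rightarrow(1)$, and takes an unnecessarily difficult route for $(1)\Rightarrow(2)$.

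\textbf{On $(2)\Rightarrow$ transitivity.} The step you call ``upgrading density near $M_0\cup M_1$ to density throughout the interior'' is exactly where the substance of the theorem lies, and your outline does not address it. Note first that $W^u(p_1)$ is not $cu$-saturated: by (the $F^{-1}$-analogue of) Lemma~\ref{lemma: W^s and W^u} it is the union of strong unstable leaves through a \emph{proper} subinterval of $\mathcal{F}^c(p_1)$, so your ``$cu$-saturated set meeting both boundaries'' heuristic does not apply. More importantly, even after one establishes that $W^u(p_1)$, $W^u(q_0)$, $W^s(p_0)$, $W^s(q_1)$ are all dense in $M$ --- which the paper does prove (Claim~\ref{claim: density of s and u}) --- this alone does \emph{not} yield transitivity: one must still show that the forward saturation $U^*=\bigcup_{n\ge0}F^n(U)$ meets the backward saturation $V^*=\bigcup_{n\ge0}F^{-n}(V)$, and the obstruction is that the one-dimensional central dynamics could in principle keep them apart. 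Remark~\ref{remark: base structure} makes this concrete: over a horseshoe base one has boundary interconnection but not transitivity, because the induced central dynamics admits disjoint invariant families of intervals. Your argument nowhere invokes the nilmanifold hypothesis on $N$ and would apply verbatim to that counterexample, so it cannot be correct as stated. The paper's proof uses the nilmanifold structure essentially, via Lemma~\ref{lemma: dense distribution} (density in $\mathbb{R}$ of the Birkhoff sums of $\ln\|DF|_{E^c}\|$ over periodic orbits of $F_0$), to manufacture periodic points $p_{0,m}$ whose total central exponents are asymptotically rationally independent of $S_\varphi F_0(q_0)$; this is then fed into a quantitative one-dimensional intersection lemma (Proposition~\ref{proposition: center intersection}) together with holonomy and central-twist estimates (Lemmas~\ref{lemma: C1-holonomy} and~\ref{lemma: central twist}) to force $U^*\cap V^*\neq\varnothing$.

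\textbf{On $(1)\Rightarrow(2)$.} Your contrapositive scheme is far more elaborate than needed, and your step~(iii) --- a $C^k$ ($k\ge2$) perturbation installing a strict filtration --- is not a standard tool and is unlikely to be available in this regularity. The paper proceeds directly: if no periodic point of $F_0$ has positive central exponent, then the Positive Liv\v{s}ic Theorem (Lemma~\ref{lemma: positive Livsic}) and a simple skew perturbation (Lemma~\ref{lemma: perturbation for attractor}) make $M_0$ an attractor for some nearby $G$, contradicting robust transitivity; this already yields $p_0,q_0\in\mathrm{Per}(F_0)$ and $p_1,q_1\in\mathrm{Per}(F_1)$ with the required exponent signs. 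The heteroclinic intersections are then read off from the transitivity of $F$ \emph{itself}, not of a perturbation: a dense forward orbit visits $\mathcal{F}^{cu}_{\mathrm{loc}}(p_1)$ and later $\mathcal{F}^{cs}_{\mathrm{loc}}(p_0)$, which after saturating by local $u$- and $s$-leaves produces a point of $W^s(p_0)\cap W^u(p_1)$. No Kupka--Smale genericity, no filtration construction, and no connecting-type lemma are required.
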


\begin{remark}\label{remark: main theorem A}
    Note that boundary interconnection property is $C^1$-open in $\mathrm{PHI}^1(M)$. We actually prove that for $F\in\mathrm{PHI}^1(M)$, if $F$ is boundary interconnected, then there exists an open neighborhood $\mathcal{U}\subseteq \mathrm{PHI}^1(M)$ of $F$, such that every $G\in\mathcal{U}\cap \mathrm{PHI}^2(M)$ is topologically transitive, see Corollary \ref{corollary: C1 and C2}. This kind of robust transitivity is exactly similar to the concept of stable ergodicity: It relies on a $C^1$-robust structure provided by a $C^2$ partially hyperbolic diffeomorphism, which exactly corresponds to the role of the absolute continuity in the stably ergodic issue with Hopf's argument.
\end{remark}

\begin{remark}\label{remark: base structure}
    The geometric and algebraic structure of $M$ is crucial for Theorem \ref{theorem: main theorem A}. In fact, the conclusion does not hold when $F_0: M_0 \to M_0$ is a general uniformly hyperbolic system, rather than an Anosov diffeomorphism on a nilmanifold. For example, let $\sigma: \Sigma \to \Sigma$ be the classical horseshoe, where $\Sigma = \{0,\ 1\}^\mathbb{Z}$ and $\sigma(x)_i = x_{i + 1}$. Assume that $\Phi: [0,\ 1] \to [0,\ 1]$ is a smooth diffeomorphism with exactly two fixed points, a source at $1$ and a sink at $0$. Assume further that $\Phi'$ is dominated by the hyperbolicity of $\sigma$, and consider the partially hyperbolic system $F: \Sigma \times[0,\ 1] \to \Sigma\times[0,\ 1]$, $F(x,\ t) = (\sigma(x),\ \Phi_x(t))$, where $\Phi_x = \Phi$ when $x_0 = 0$ and $\Phi_x = \Phi^{-1}$ when $x_0 = 1$. Then $F$ is boundary interconnected but not topologically transitive. In fact, choose two fixed points $p = (0),\ q = (1)$ of $\sigma$, then $p_0 :=(p,\ 0)$, $p_1 := (p,\ 1)$, $q_0 := (q,\ 0)$ and $q_1 := (q,\ 1)$ satisfy the property that we need for boundary interconnection. However, if we take open intervals $U,\ V\subseteq (0,\ 1)$ such that $\Phi^n(U)\cap V = \varnothing,\ \forall n\in\mathbb{Z}$, then $\Sigma\times \bigcup_{n\in\mathbb{Z}}\Phi^n(U)$ and $\Sigma\times \bigcup_{n\in\mathbb{Z}}\Phi^n(V)$ are disjoint $F$-invariant open subsets, and hence $F$ is not topologically transitive.
\end{remark}

\subsection{Robustly Intermingled Basins}
The phenomenon of intermingled basins was originally investigated by Kan \cite{Kan94}, and summarized by Bonatti, Diaz and Viana \cite{BDV05}. In such a system, there are two SRB measures whose basins both admit positive volume in every non-empty open subset. Some of these examples are topologically transitive or even topologically mixing \cite{GS19,Xia23}, indicating the significant difference between the topological and measure-theoretical notions of indecomposability. Moreover, the Kan's intermingled basins are robust \cite{KS11}.

This type of construction continues to be a source of interesting researches and has been especially studied in the aspect of intermingled basins \cite{MW05,DVY16,BP18,UV18}. There is also a family of topologically transitive diffeomorphisms constructed by inserting a blender and embedding into a boundaryless manifold \cite{CGS18}. Recently, there have been some advances focusing on the number of measures of maximal entropy for systems related to this type of construction \cite{NBRV21,RT22}.

Specifically, we describe the phenomenon of intermingled basins in the setting of partially hyperbolic diffeomorphisms with interval central leaves.

Given $F\in\mathrm{PHI}^k(M)\ (k \geq 2)$, for $i = 0,\ 1$, there exists a unique SRB measure $\mu_i$ on $M_i$ for $F_i$, since $F_i$ is an Anosov diffeomorphism on a nilmanifold $M_i$ and therefore topologically transitive. We call them \textit{the boundary SRB measures} of $F$.

Denote by $\mathrm{PHI}_-^k(M)\ (k \geq 2)$ the collection of diffeomorphisms $F\in\mathrm{PHI}^k(M)$ that are mostly contracting along the center direction on each boundary, i.e. both of the boundary SRB measures $\mu_i\ (i = 0,\ 1)$ admit negative central Lyapunov exponents, in the sense that
$$\int_M\ln\left\|DF|_{E^c}\right\|d\mu_i < 0.$$
Note that $\mathrm{PHI}^k_-(M)$ is open in $\mathrm{PHI}^k(M)$, and for $F\in\mathrm{PHI}^k_-(M)$, $\mu_i\ (i = 0,\ 1)$ is an SRB measure (not only for $F_i$, but also for $F$).

\begin{definition}\label{definition: IB and RIB}
    We say that $F\in\mathrm{PHI}_-^k(M)\ (k\geq 2)$ admits \textit{intermingled basins}, if the basins of its boundary SRB measures, $B(\mu_0)$ and $B(\mu_1)$, both admit positive volume inside any non-empty open subset of $M$, and their union has full volume. Further, we say that $F$ admits \textit{$C^k$-robustly intermingled basins}, if there exists an open neighborhood $\mathcal{U}\subseteq\mathrm{PHI}_-^k(M)$ of $F$ such that every $G\in\mathcal{U}$ admits intermingled basins.
\end{definition}

An interesting result of Ures and V\'asquez \cite{UV18} showed that the phenomenon of robustly intermingled basins does not appear on $\mathbb{T}^3$ , indicating the specialty of manifolds with boundary.

In this paper, we also obtain the characterization of robustly intermingled basins for partially hyperbolic diffeomorphisms with interval central leaves that are mostly contracting along the center direction on each boundary.

\begin{maintheorem}\label{theorem: main theorem B}
    For $F\in\mathrm{PHI}_-^k(M)\ (k \geq 2)$, the following statements are equivalent:
    \begin{enumerate}
        \item $F$ admits $C^k$-robustly intermingled basins;
        \item $F$ is $C^k$-robustly transitive;
        \item $F$ is boundary interconnected.
    \end{enumerate}
\end{maintheorem}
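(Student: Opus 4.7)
The equivalence $(2)\Leftrightarrow(3)$ is immediate from Theorem \ref{theorem: main theorem A} applied inside the open subclass $\mathrm{PHI}^k_-(M)\subseteq\mathrm{PHI}^k(M)$. The remaining content is to pull in statement $(1)$, which I would obtain by closing the cycle $(1)\Rightarrow(2)\Leftrightarrow(3)\Rightarrow(1)$.

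For $(3)\Rightarrow(1)$, the plan is to exploit that both boundary interconnection and the mostly contracting condition at each boundary are $C^1$-open, so they persist on a whole $C^1$-neighborhood $\mathcal{U}\subseteq\mathrm{PHI}^k_-(M)$ of $F$. Fix $G\in\mathcal{U}$. By Theorem \ref{theorem: main theorem A}, $G$ is topologically transitive, so every unstable leaf is dense in $M$. Since $G$ is mostly contracting at $M_0$ and $M_1$, the boundary measures $\mu_0^G, \mu_1^G$ lift to honest ergodic SRB measures of $G$ on $M$, and by the Bonatti--Viana theory of Gibbs $u$-states the union of their basins has full Lebesgue measure. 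The heteroclinic configurations furnished by boundary interconnection (Remark \ref{remark: BI}) provide small open $u$-discs sitting inside $B(\mu_0^G)$ and $B(\mu_1^G)$ respectively. Density of unstable leaves (via transitivity and a $\lambda$-lemma argument) together with absolute continuity of the Pesin stable holonomy then propagates positive-volume presence of each basin into every non-empty open subset of $M$, yielding intermingled basins for $G$.

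For $(1)\Rightarrow(2)$, I would argue by contraposition. Assuming $F$ is not $C^k$-robustly transitive, the equivalence $(2)\Leftrightarrow(3)$ together with $C^1$-openness of $\mathrm{PHI}_-^k(M)$ lets me find, in every $C^k$-neighborhood of $F$, a non-transitive $G\in\mathrm{PHI}_-^k(M)$ admitting two disjoint non-empty $G$-invariant open sets $A, B$. The key step is a case analysis. If $A$ and $B$ separate the two boundaries, say $M_0\subseteq\overline{A}$ and $M_1\subseteq\overline{B}$, then the mostly contracting property forces $B(\mu_0^G)\subseteq A$ and $B(\mu_1^G)\subseteq B$ up to null sets, contradicting the positive-volume requirement for intermingled basins. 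In the remaining case $\overline{A}$ contains both boundaries and $B$ is a positive-volume invariant open set whose points cannot converge to $M_0\cup M_1$; hence $B$ is disjoint from $B(\mu_0^G)\cup B(\mu_1^G)$, violating the full-volume requirement.

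The main obstacle I foresee is the positive-volume propagation step in $(3)\Rightarrow(1)$: converting the topological heteroclinic data produced by boundary interconnection into a uniform Lebesgue lower bound for each basin inside every open set, stable under $C^k$ perturbation. This demands careful control of the Pesin stable holonomy near the heteroclinic intersections, together with uniform distortion estimates along unstable discs, and is precisely where the $C^k$-regularity ($k\geq 2$) becomes indispensable, in a role parallel to that of absolute continuity in the classical Hopf argument.
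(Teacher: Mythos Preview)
Your cycle $(1)\Rightarrow(2)\Leftrightarrow(3)\Rightarrow(1)$ is a reasonable route, and the reduction $(2)\Leftrightarrow(3)$ via Theorem~\ref{theorem: main theorem A} is fine. But the sketch of $(3)\Rightarrow(1)$ contains a genuine error, and both remaining implications diverge from the paper in ways worth noting.

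\textbf{The gap in $(3)\Rightarrow(1)$.} You claim that the heteroclinic configuration $W^s(p_0)\cap W^u(p_1)$ produces $u$-discs inside $B(\mu_0^G)$. It does not: points of $W^s(p_0)$ have forward orbits shadowing $\mathrm{Orb}(p_0)$, so their empirical measures converge to the periodic-orbit measure at $p_0$, which is never the (non-atomic) SRB measure $\mu_0$. Thus $W^s(p_0)\cap B(\mu_0)=\varnothing$, and the heteroclinic data by itself gives you nothing in the basin. Relatedly, your appeal to Bonatti--Viana for the full-volume statement is unjustified: the class $\mathrm{PHI}_-^k(M)$ only assumes negative center exponent for the two \emph{boundary} SRB measures, not for every Gibbs $u$-state, so the global mostly-contracting machinery does not apply out of the box. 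The paper avoids both issues: in Theorem~\ref{theorem: TT to IB} it uses only transitivity (which you already have from Theorem~\ref{theorem: main theorem A}) to push an arbitrary $cu$-box close to $M_0$, then intersects it with a Pesin block $K_0\subseteq M_0$ on which the local center-stable set has uniform size, obtaining positive Lebesgue measure in $B(\mu_0)$; the full-volume claim is handled by a direct density-point and bounded-distortion argument (Claim~\ref{claim: basin in cu box} and the paragraph following it), not by invoking a general Gibbs $u$-state classification.

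\textbf{Comparison for $(1)\Rightarrow(2)$.} Your contraposition is a different route from the paper, which instead proves $(1)\Rightarrow(3)$ directly (Theorem~\ref{theorem: RIB to BI}): it uses Lemma~\ref{lemma: perturbation for attractor} to show that if no periodic point on $M_0$ has positive center exponent then a small perturbation makes $M_0$ an attractor, contradicting intermingled basins; the heteroclinic intersections are then read off from the basin geometry. Your argument can be made to work, but the dichotomy you state is not exhaustive (``$A$ and $B$ separate the boundaries'' versus ``$\overline A$ contains both'' omits several configurations). The clean version is: since $A,B$ are open disjoint and each $F_i$ is transitive, at most one of $A,B$ meets $M_0$; whichever one misses $M_0$ is an invariant open set whose closure meets $M_0$ in a closed nowhere-dense $F_0$-invariant set, hence a $\mu_0$-null set, so that open set is disjoint from $B(\mu_0^G)$ and intermingling fails there. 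Also, the assertion ``$B(\mu_0^G)\subseteq A$ up to null sets'' is stronger than what you can prove and stronger than what you need.
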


As an application of Theorem \ref{theorem: main theorem A} and Theorem \ref{theorem: main theorem B}, consider the skew-product system
$$A_\varphi(x,\ t) := (Ax,\ \varphi(x, t)): \mathbb{T}^d\times[0, 1] \to \mathbb{T}^d\times[0, 1],$$
where $A\in\mathrm{Aut}(\mathbb{T}^d)$ is a hyperbolic toral automorphism with the hyperbolic splitting $L^s\oplus L^u$, and $\varphi(x, t): \mathbb{T}^d\times[0, 1] \to [0, 1]$ is a $C^k\ (k \geq 2)$ map such that $\varphi(x,\ \cdot) \in \mathrm{Diff}_\partial^k([0, 1])$ for each $x\in \mathbb{T}^d$. Assume that
$$\|A|_{L^s}\| < \left|\frac{\partial \varphi}{\partial t}\right| < m(A|_{L^u}).$$
Then $A_\varphi\in\mathrm{PHI}^k(\mathbb{T}^d\times[0, 1])$. For convenience, we denote the iterations of $A_\varphi$ by
$$A_\varphi^n(x,\ t) = (A^nx,\ \varphi_x^n(t)).$$

Following the definition proposed by Bonatti, D\'iza and Viana in \cite[Chapter 11.1.1]{BDV05},
we say that $A_\varphi$ is a \textit{Kan-type skew-product system}, if there exist two periodic points $p$ and $q$ of $A\in\mathrm{Aut}(\mathbb{T}^d)$ with periods $\pi(p)$ and $\pi(q)$, such that $\varphi_p^{\pi(p)}$ has exactly two fixed points, a source at $(p,\ 1)$ and a sink at $(p,\ 0)$; and $\varphi_q^{\pi(q)}$ also has exactly two fixed points, a source at $(q,\ 0)$ and a sink at $(q,\ 1)$. Further, we say that $A_\varphi$ is mostly contracting, if
$$\int_{\mathbb{T}^d}\frac{\partial \varphi}{\partial t}(x,\ 0)dx < 0 \text{ and } \int_{\mathbb{T}^d}\frac{\partial \varphi}{\partial t}(x,\ 1)dx < 0.$$

By Theorem \ref{theorem: main theorem A}, Kan-type skew-product systems are $C^k$-robustly transitive. By Theorem \ref{theorem: main theorem B}, mostly contracting Kan-type skew product systems admit $C^k$-robustly intermingled basins, for which the boundary SRB measures are Lebesgue measures on $\mathbb{T}^d\times\{0\}$ and $\mathbb{T}^d\times\{1\}$.

\vspace{0.2cm}
\noindent
{\bf Organization.} This paper is organized as follows.
	In Section \ref{section: preliminaries}, we introduce some basic definitions and results. In Section \ref{section: BI to TT}, we establish topological transitivity from boundary interconnection (Theorem \ref{theorem: BI to TT}), which essentially shows robust transitivity in Theorem \ref{theorem: main theorem A}. Then we complete the proof of Theorem \ref{theorem: main theorem A} in Section \ref{section: RTT to BI}, showing that robust transitivity implies boundary interconnection. In Section \ref{section: IB}, we prove Theorem \ref{theorem: main theorem B}, showing the equivalence between robustly intermingled basins and boundary interconnection.

\section{Preliminaries}\label{section: preliminaries}
\subsection{Partially hyperbolic diffeomorphisms with interval central leaves}\label{subsection: PHI}

As defined in Section \ref{section: introduction}, $M$ is a thickened nilmanifold, $\mathrm{Diff}_\partial^k(M)$ is the collection of $C^k$ diffeomorphisms on $M$ preserving each boundary $M_0$ and $M_1$ respectively, $\mathrm{PHI}^k(M)$ is the collection of partially hyperbolic diffeomorphisms in $\mathrm{Diff}_\partial^k(M)$ with interval central leaves, and $\mathrm{PHI}_-^k(M)$ is the collection of diffeomorphisms in $\mathrm{PHI}^k(M)$ whose boundary SRB measures admit negative central Lyapunov exponents.

For $F\in\mathrm{PHI}^2(M)$, the dynamical foliations $\mathcal{F}^\sigma$ are H\"older continuous with $C^{1+}$ leaves, for $\sigma = s,\ c,\ u,\ cs,\ cu$. There is a well-known result on the regularity of holonomy maps in \cite{PSW97}, which we state in our setting as follows. Note that $F\in\mathrm{PHI}^2(M)$ is dynamically coherent and \textit{center bunched}, in the sense that for any $x\in M$,
$$\|Df|_{E^s(x)}\| < \frac{m(Df|_{E^c(x)})}{\|Df|_{E^c(x)}\|} \text{ and } m(Df|_{E^u(x)}) > \frac{\|Df|_{E^c(x)}\|}{m(Df|_{E^c(x)})}.$$

A subset of $M$ is \textit{$\sigma$-saturated}, if it is a union of $\sigma$-leaves, for $\sigma = s,\ c,\ u,\ cs,\ cu$.

\begin{lemma}[\cite{PSW97}, Theorem B]\label{lemma: C1-holonomy}
    For $F\in\mathrm{PHI}^2(M)$, the holonomy map along the stable foliation (resp. the unstable foliation) restricted to a center-stable leaf (resp. a center-unstable leaf) is a $C^1$ diffeomorphism. Moreover, the derivative of the holonomy map is uniformly continuous with respect to the base points.
\end{lemma}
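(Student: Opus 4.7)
The plan is to deduce the lemma directly from \cite[Theorem B]{PSW97}, so the work reduces to checking its hypotheses in our setting. The Pugh--Shub--Wilkinson theorem requires the diffeomorphism to be $C^2$, partially hyperbolic, dynamically coherent, and center bunched. For $F\in\mathrm{PHI}^2(M)$, the first three properties are built into Definition \ref{definition: PHI}. So the only thing to verify is center bunching, which is precisely what is asserted in the unnumbered paragraph just above the lemma.

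To verify center bunching, I would exploit the crucial structural feature of $\mathrm{PHI}^2(M)$: each central leaf is an interval, so $E^c$ is one-dimensional. Whenever $E^c$ has rank one, $m(Df|_{E^c(x)}) = \|Df|_{E^c(x)}\|$ at every point, so the bunching ratio is identically $1$ and the center bunching inequalities
$$\|Df|_{E^s(x)}\| < \frac{m(Df|_{E^c(x)})}{\|Df|_{E^c(x)}\|} \quad \text{and} \quad m(Df|_{E^u(x)}) > \frac{\|Df|_{E^c(x)}\|}{m(Df|_{E^c(x)})}$$
collapse to $\|Df|_{E^s(x)}\|<1<m(Df|_{E^u(x)})$, which is the defining inequality of partial hyperbolicity recorded in Definition \ref{definition: PHI}. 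Hence $F$ is automatically center bunched.

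With every hypothesis in place, I would then invoke \cite[Theorem B]{PSW97} to conclude that the stable holonomy between center-stable plaques, and the unstable holonomy between center-unstable plaques, are $C^1$ diffeomorphisms whose derivatives depend continuously on the base point. Compactness of $M$ upgrades this to uniform continuity of the derivative in the base point, which is the final assertion of the lemma.

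The main obstacle here is not mathematical but bibliographical: the notion of center bunching has several variants in the literature (pointwise versus averaged, involving $\|Df\|$ versus $\|Df^{-1}\|^{-1}$, etc.), and one should make sure the version required by \cite{PSW97} is implied by the pointwise inequalities recorded above. In the one-dimensional center case this is routine, since all standard formulations collapse to partial hyperbolicity, but it is worth an explicit remark. If one instead wanted to give a self-contained proof of \cite[Theorem B]{PSW97}, the difficult step would be the graph-transform / $C^1$-section argument that identifies the holonomy as the fixed point of a fiber contraction on a suitable space of $C^1$ sections; here that step is safely outsourced to the cited reference.
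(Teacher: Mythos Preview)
Your proposal is correct and matches the paper's treatment exactly: the paper does not give an independent proof but simply notes in the paragraph preceding the lemma that $F\in\mathrm{PHI}^2(M)$ is dynamically coherent and center bunched (the latter because $\dim E^c=1$ forces $m(Df|_{E^c})=\|Df|_{E^c}\|$), and then cites \cite[Theorem~B]{PSW97}. Your verification of the hypotheses and the observation about the one-dimensional center are precisely what the authors have in mind.
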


The following result on the estimate of central twist is also useful in our proof.

\begin{lemma}\label{lemma: central twist}
    Assume that $F\in\mathrm{PHI}^2(M)$ and $p\in M_0$ is a fixed point of $F_0$ with $\lambda^c(p) < 0$. Fix $r\in \mathcal{F}^u(p)$ and let $\mathrm{Hol}_p^r: \mathcal{F}^{cs}_{loc}(p) \to \mathcal{F}^{cs}_{loc}(r)$ be the holonomy map along the unstable foliation. For $x_0\in\mathcal{F}^{cs}_{loc}(p)$, $x_n = F^n(x)$, $n\in\mathbb{N}$, define
    $$y_n = \mathcal{F}^s_{loc}(x_n)\pitchfork\mathcal{F}^c_{loc}(p),\ z_n = \mathrm{Hol}_p^r(x_n),\ w_n = \mathrm{Hol}_p^r(y_n),\ w'_n = \mathcal{F}^s_{loc}(z_n)\pitchfork\mathcal{F}^c_{loc}(r).$$
    Then we have the following estimate:
    $$d_c(w'_n,\ w_n) \leq Ce^{n\lambda_p},\ \forall n\in\mathbb{N}.$$
    Here $C\geq 1$ is a constant depending on $p$ and $r$, and
    $$\lambda_p := \frac{\lambda^u(p) - \lambda^c(p)}{\lambda^u(p) - \lambda^s(p)}\lambda^s(p) < \lambda^c(p) < 0,$$
    where
    $$\ln\left\|DF|_{E^s(p)}\right\| =: \lambda^s(p) < \lambda^c(p) <  \lambda^u(p) := \ln m\left(DF|_{E^u(p)}\right).$$
\end{lemma}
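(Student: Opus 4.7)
The plan is to read $d_c(w_n, w_n')$ as the discrepancy between two natural compositions of stable projection and unstable holonomy evaluated at $x_n$, and to control it by combining the $C^1$ regularity of holonomies with $F$-equivariance.

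First, I introduce the maps $T, T' : \mathcal{F}^{cs}_{loc}(p) \to \mathcal{F}^c_{loc}(r)$ given by $T := \mathrm{Hol}_p^r \circ \pi^s_p$ and $T' := \pi^s_r \circ \mathrm{Hol}_p^r$, where $\pi^s_p,\pi^s_r$ are the stable projections onto $\mathcal{F}^c_{loc}(p),\mathcal{F}^c_{loc}(r)$ respectively. Dynamical coherence forces $\mathrm{Hol}_p^r$ to carry $\mathcal{F}^c_{loc}(p)$ into $\mathcal{F}^c_{loc}(r)$: for $y\in\mathcal{F}^c(p)$ one has $\mathcal{F}^u(y)\subset\mathcal{F}^{cu}(r)$, hence $\mathcal{F}^u(y)\cap\mathcal{F}^{cs}(r)\subset\mathcal{F}^c(r)$. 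Thus both $T$ and $T'$ take values in $\mathcal{F}^c_{loc}(r)$ and they coincide on $\mathcal{F}^c_{loc}(p)$. With $w_n=T(x_n)$ and $w_n'=T'(x_n)$, Lemma~\ref{lemma: C1-holonomy} and its analogue for the $u$-holonomy between $cs$-leaves yield a linearization around $y_n$ of the form $d_c(w_n,w_n')\lesssim \tau(y_n,r)\cdot d_s(x_n,y_n)$, where the twist coefficient $\tau(y,r)$ is the norm of the $E^c$-component of $D\mathrm{Hol}_p^r|_y$ restricted to $E^s(y)$, quantifying the failure of $\mathrm{Hol}_p^r$ to preserve the sub-foliation $\mathcal{F}^s$ of $\mathcal{F}^{cs}$.

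Second, I exploit the equivariance $F\circ\mathrm{Hol}_p^r=\mathrm{Hol}_p^{F(r)}\circ F|_{\mathcal{F}^{cs}(p)}$ (from $F$-invariance of $\mathcal{F}^u$ and $\mathcal{F}^{cs}$) to rescale. For any $0\leq k\leq n$, setting $r_k:=F^{-k}(r)$, we have $w_n=F^k(\tilde w_{n-k})$ and $w_n'=F^k(\tilde w_{n-k}')$, where $\tilde w_m,\tilde w_m'$ are the analogues built with base $r_k$ in place of $r$. Since $r_k$ is at distance $\lesssim e^{-k\lambda^u}$ from $p$, the holonomy $\mathrm{Hol}_p^{r_k}$ is close to the identity. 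Differentiating the equivariance identity at $y$ produces a cocycle relation for $\tau$, which iterated backwards along the orbit yields $\tau(y_{n-k},r_k)\lesssim e^{k(\lambda^s-\lambda^c)}$; meanwhile the stable contraction gives $d_s(x_{n-k},y_{n-k})\lesssim e^{(n-k)\lambda^s}$, and $F^k$ contracts $\mathcal{F}^c$ near $p$ by $\sim e^{k\lambda^c}$. Combining these three factors together with the Hölder remainders from the $C^1$ expansion (arising from the transverse regularity of the $u$-holonomy between $cs$-leaves), and optimizing the choice of $k\in[0,n]$, produces the claimed bound $d_c(w_n,w_n')\leq Ce^{n\lambda_p}$.

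The main obstacle is the precise accounting of the Hölder remainders in the linearization of $T-T'$. The leading linear terms would cancel to give a bound of order $e^{n\lambda^s}$, but the Hölder loss from the transverse regularity of the $u$-holonomy between $cs$-leaves weakens the exponent; the specific rate $\lambda_p=\frac{\lambda^u-\lambda^c}{\lambda^u-\lambda^s}\lambda^s$ emerges as the crossover at which the central-contraction gain of $F^k$ near $p$ exactly compensates the Hölder cost of pulling the base point from $r$ back to $p$ along $\mathcal{F}^u$. Verifying this balance uniformly in $n$ and $k$, and tracking the cocycle decay of $\tau$ along orbits converging to $p$, is the technical heart of the argument.
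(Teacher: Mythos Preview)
Your framework has a genuine gap. The twist coefficient $\tau(y,r)$ is defined as ``the norm of the $E^c$-component of $D\mathrm{Hol}_p^r|_y$ restricted to $E^s(y)$'', but $\mathrm{Hol}_p^r:\mathcal{F}^{cs}_{loc}(p)\to\mathcal{F}^{cs}_{loc}(r)$ is only H\"older in the $E^s$-direction, not differentiable: Lemma~\ref{lemma: C1-holonomy} gives $C^1$ regularity for the $u$-holonomy \emph{within} a $cu$-leaf (hence for $\mathrm{Hol}_p^r$ restricted to $\mathcal{F}^c_{loc}(p)$), but says nothing about the $u$-holonomy \emph{between} $cs$-leaves. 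So $D\mathrm{Hol}_p^r|_{E^s}$ does not exist in general, the linearization $d_c(w_n,w'_n)\lesssim\tau(y_n,r)\,d_s(x_n,y_n)$ is ill-posed, and the cocycle relation for $\tau$ has no object to be a relation for. You partly acknowledge this in your last paragraph, but one cannot simultaneously invoke a derivative and concede only H\"older regularity; replacing $\tau$ by a H\"older ratio would destroy the clean cocycle decay $e^{k(\lambda^s-\lambda^c)}$, and the exponent $\lambda_p$ would then have to be recovered from an unspecified bunching constant rather than from the balance you sketch.

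The paper sidesteps this entirely. It uses the $C^1$ regularity of $\mathrm{Hol}_p^r$ \emph{only} on $\mathcal{F}^c_{loc}(p)$ (legitimate, since that is the $u$-holonomy inside $\mathcal{F}^{cu}(p)$) to reduce $d_c(w_n,w'_n)$ to $d_c(y_n,y'_n)$, where $y'_n:=(\mathrm{Hol}_p^r)^{-1}(w'_n)\in\mathcal{F}^c_{loc}(p)$. It then pulls back by $F^{-N}$ and bounds the \emph{ambient} distance $d(F^{-N}(y'_n),F^{-N}(y_n))$ by the triangle inequality along the quadrilateral $y'_n\to w'_n\to z_n\to x_n\to y_n$: the two $u$-legs contract to $\lesssim e^{-N\lambda^u(p)}$ under $F^{-N}$, and the two $s$-legs are $\lesssim e^{(n-N)\lambda^s(p)}$. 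Pushing forward along the center costs $e^{N\lambda^c(p)}$, and choosing $N\approx\frac{-\lambda^s(p)}{\lambda^u(p)-\lambda^s(p)}\,n$ so that $-N\lambda^u(p)\approx(n-N)\lambda^s(p)$ yields exactly $e^{n\lambda_p}$. Your parameter $k$ plays the same role as this $N$, so your equivariance-plus-optimization skeleton is correct; the point is that no linearization of the holonomy in the stable direction is needed, only the crude ambient triangle inequality after a well-chosen pullback.
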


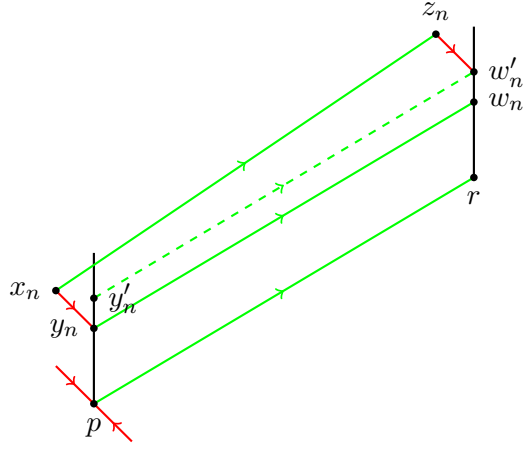
\begin{figure}[htbp]
    \centering
    \begin{tikzpicture}[mid arrow/.style = {decoration = {markings, mark = at position 0.5 with {\arrow{>}}}, postaction = {decorate}}]
        \draw [thick, mid arrow, color = red] (-0.5,0.5)--(0,0);
        \draw [thick, mid arrow, color = red] (0.5,-0.5)--(0,0);
        \draw [thick, mid arrow, color = green] (0,0)--(5,3);
        \draw [thick] (0,0)--(0,2);
        \draw [thick] (5,3)--(5,5);
        \node [circle, fill, inner sep = 1pt, label = below: $p$] at (0,0) {};
        \node [circle, fill, inner sep = 1pt, label = below: $r$] at (5,3) {};

        \draw [thick, mid arrow, color = red] (-0.5,1.5)--(0,1);
        \draw [thick, mid arrow, color = green] (-0.5,1.5)--(4.5,4.9);
        \draw [thick, mid arrow, color = green] (0,1)--(5,4);
        \draw [thick, mid arrow, color = red] (4.5,4.9)--(5,4.4);
        \draw [dashed, thick, mid arrow, color = green] (0,1.4)--(5,4.4);
        
        \node [circle, fill, inner sep = 1pt, label = left: $x_n$] at (-0.5,1.5) {};
        \node [circle, fill, inner sep = 1pt, label = left: $y_n$] at (0,1) {};
        \node [circle, fill, inner sep = 1pt, label = above: $z_n$] at (4.5,4.9) {};
        \node [circle, fill, inner sep = 1pt, label = right: $w_n$] at (5,4) {};
        \node [circle, fill, inner sep = 1pt, label = right: $w'_n$] at (5,4.4) {};
        \node [circle, fill, inner sep = 1pt, label = right: $y'_n$] at (0,1.4) {};
    \end{tikzpicture}
    \caption{Central twist}
    \label{figure: central twist}
\end{figure}

\begin{proof}
    Take $y'_n := (\mathrm{Hol}_p^r)^{-1}(w'_n)$, see Figure \ref{figure: central twist}. By Lemma \ref{lemma: C1-holonomy}, the holonomy map $\mathrm{Hol}_p^r$ restricted to $\mathcal{F}^c_{loc}(p)$ is a $C^1$ diffeomorphism. Hence, it suffices to estimate $d_c(y'_n,\ y_n)$. First notice that
    $$d_c\left(F^{-N}(y'_n),\ F^{-N}(y_n)\right) \leq 2d\left(F^{-N}(y'_n),\ F^{-N}(y_n)\right)$$
    when $n - N$ is sufficiently large. Therefore, there exists a constant $C\geq1$ such that
    \begin{align*}
        &d_c\left(F^{-N}(y'_n),\ F^{-N}(y_n)\right)\\
        \leq& 2\left[d_u\left(F^{-N}(y'_n),\ F^{-N}(w'_n)\right) + d_s\left(F^{-N}(w'_n),\ F^{-N}(z_n)\right)\right.\\
        &+ \left.d_u\left(F^{-N}(z_n),\ F^{-N}(x_n)\right) + d_s\left(F^{-N}(x_n),\ F^{-N}(y_n)\right)\right]\\
        \leq &C\left[e^{-N\lambda^u(p)} + e^{(n - N)\lambda^s(p)}\right].
    \end{align*}
    Now let $-N\lambda^u(p) \approx (n - N)\lambda^s(p)$, we take
    $$N = \left[\frac{-\lambda^s(p)}{\lambda^u(p) - \lambda^s(p)}n\right] + 1.$$
    It follows that when $n$ is sufficiently large, we have
    \begin{align*}
        d_c(y'_n,\ y_n) &\leq Ce^{N\lambda^c(p)}d_c\left(F^{-N}(y'_n),\ F^{-N}(y_n)\right)\\
        &\leq 2C^2e^{N\lambda^c(p) + (n - N)\lambda^s(p)}\\
        &\leq 4C^2e^{n\lambda_p},
    \end{align*}
    where
    $$\lambda_p = \lim_{n\to+\infty}\left(\frac{N}{n}\lambda^c(p) + \frac{n - N}{n}\lambda^s(p)\right) = \frac{\lambda^u(p) - \lambda^c(p)}{\lambda^u(p) - \lambda^s(p)}\lambda^s(p) < \lambda^c(p) < 0.$$
    This completes the proof of Lemma \ref{lemma: central twist}.
\end{proof}

\subsection{Asymptotically rational independence of Birkhoff sums}
Now we introduce some results related to Birkhoff sums in the uniformly hyperbolic setting.

There is a concept of asymptotically rational independence introduced in \cite{GS19}. For real numbers $a,\ b\in\mathbb{R}$, define
$$(a,\ b) := \inf\{|ka + lb|: k,\ l\in\mathbb{Z},\ ka + lb\neq 0\}.$$
Recall that they are \textit{rationally independent} if and only if $(a,\ b) = 0$. Inspired by this, the asymptotically rational independence is defined as follows.

\begin{definition}\label{definition: ARI}
    Let $b\in\mathbb{R}$. A sequence $\{a_m\}\subseteq\mathbb{R}$ is \textit{asymptotically rationally independent} of $b$, if $(a_m,\ b) \to 0$.
\end{definition}

To understand asymptotically rational independence, we state the following results.

\begin{lemma}\label{lemma: ARI}
    If $a,\ b\in\mathbb{R}$ and $b\neq 0$, then $(a,\ b) < \varepsilon$ for some $\varepsilon > 0$ if and only if each orbit of $T_a(x) := x + a\ (\mathrm{mod}\ b\mathbb{Z})$ is $\varepsilon$-dense in $\mathbb{R}/(b\mathbb{Z})$.
\end{lemma}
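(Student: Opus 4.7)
The plan is to reduce the equivalence to the classification of additive subgroups of $\mathbb{R}$ together with the rotation structure on a circle.

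First I would observe that $T_a$ is a translation on the compact abelian group $\mathbb{R}/(b\mathbb{Z})$, and hence an isometry with respect to the quotient metric. Therefore every orbit of $T_a$ is a translate of the orbit of $0$, and $\varepsilon$-density of one orbit is equivalent to $\varepsilon$-density of all orbits. Moreover, the orbit of $0$ under $T_a$ is precisely the image $G$ in $\mathbb{R}/(b\mathbb{Z})$ of the additive subgroup $H := a\mathbb{Z}+b\mathbb{Z}\subseteq\mathbb{R}$, since $T_a^k(0) = ka \pmod{b\mathbb{Z}}$.

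Next I would invoke the classical dichotomy that every additive subgroup of $\mathbb{R}$ is either dense in $\mathbb{R}$ or of the form $c\mathbb{Z}$ for some $c > 0$. If $a/b \notin \mathbb{Q}$, then $H$ is dense in $\mathbb{R}$, which immediately yields $(a,b) = 0$ and $G$ dense in $\mathbb{R}/(b\mathbb{Z})$; in this case both sides of the equivalence hold for every $\varepsilon > 0$. If $a/b \in \mathbb{Q}$, then $H = c\mathbb{Z}$, where $c = \min\{h\in H : h > 0\}$, and comparing with the definition of $(a,b)$ gives $c = (a,b)$. In this rational case $G = c\mathbb{Z}/(b\mathbb{Z})$ is a finite cyclic subgroup of the circle whose consecutive points are exactly $c$ apart.

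The equivalence then follows by inspecting the geometry of $G$: it is $\varepsilon$-dense in $\mathbb{R}/(b\mathbb{Z})$ precisely when its spacing $c = (a,b)$ is controlled by $\varepsilon$, which under the natural convention is the condition $(a,b) < \varepsilon$. There is no substantive obstacle here; the lemma is essentially a quantitative repackaging of the structure theorem for closed subgroups of $\mathbb{R}$, and the only care needed is to match the convention on $\varepsilon$-density (i.e. whether one measures the maximum gap in $G$ or the maximum distance from a point of $\mathbb{R}/(b\mathbb{Z})$ to $G$), which only affects constants.
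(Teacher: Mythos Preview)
Your argument is correct. The paper's own proof is a single sentence, ``The conclusion follows directly from the definition of $(a,\ b)$,'' so your proposal is strictly more detailed than what the authors provide: you explicitly invoke the dichotomy for additive subgroups of $\mathbb{R}$ and identify $(a,b)$ with the positive generator of $a\mathbb{Z}+b\mathbb{Z}$ in the discrete case, whereas the paper treats all of this as implicit in the definition. Your closing caveat about the convention for $\varepsilon$-density is well placed; under the ``maximal gap'' reading (gap between consecutive orbit points $< \varepsilon$) the equivalence is exact, which is evidently the convention the paper has in mind.
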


\begin{proof}
    The conclusion follows directly from the definition of $(a,\ b)$.
\end{proof}

\begin{lemma}\label{lemma: small distance}
    If $a_1,\ a_2,\ b\in\mathbb{R}$ and $b\neq 0$, then $0 < |a_1 - a_2| < \varepsilon\ (\mathrm{mod}\ b\mathbb{Z})$ implies that there exists $a\in\{a_1,\ a_2\}$ such that $(a,\ b) < \varepsilon$.
\end{lemma}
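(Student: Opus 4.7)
The plan is to argue by contradiction, leveraging the dichotomy that subgroups of $\mathbb R$ are either discrete or dense. First I would extract from the hypothesis an integer $m$ such that $c := a_1 - a_2 - mb$ satisfies $0 < |c| < \varepsilon$. Since $c = 1\cdot a_1 + (-1)\cdot a_2 + (-m)\cdot b$ lies in the subgroup $\langle a_1, a_2, b\rangle \subset \mathbb R$, this immediately yields $(a_1-a_2, b) \le |c| < \varepsilon$; the content of the lemma is then to \emph{localize} the smallness onto one of the two generators $a_1$ or $a_2$ (together with $b$).

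Suppose for contradiction that $(a_1, b) \ge \varepsilon$ and $(a_2, b) \ge \varepsilon$. Then by Lemma~\ref{lemma: ARI} (or directly from the definition of $(a,b)$), both $\langle a_i, b\rangle$ are discrete subgroups of $\mathbb R$, which forces $a_i/b \in \mathbb Q$ for $i = 1, 2$. Writing $a_i/b = p_i/q_i$ in lowest terms with $q_i > 0$, we have $(a_i, b) = |b|/q_i \ge \varepsilon$, so $q_i \le |b|/\varepsilon$. Substituting into the expression for $c$ gives $c = |b|\bigl((p_1 q_2 - p_2 q_1)/(q_1 q_2) - m\bigr)$, and the infimum of $|c|$ over $m\in\mathbb Z$ (subject to $c\neq 0$) is at least $|b|/\operatorname{lcm}(q_1, q_2)$.

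The main obstacle, and the step I expect to be technically delicate, is converting this joint lower bound into one involving only one of the $q_i$: a priori $\operatorname{lcm}(q_1, q_2)$ can be of order $q_1 q_2$, and the resulting estimate $|c| \ge \varepsilon^2/|b|$ does not, on its own, contradict $|c| < \varepsilon$. To close this gap I would perform a pigeonhole/divisibility analysis on $p_1 q_2 - p_2 q_1$ modulo $q_1 q_2$, using that $c \neq 0$ to isolate the index $i \in \{1, 2\}$ for which the assumed upper bound $q_i \le |b|/\varepsilon$ must actually fail. That would furnish $(a_i, b) = |b|/q_i < \varepsilon$ for this $i$, contradicting the contrary assumption and completing the proof.
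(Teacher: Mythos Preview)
Your set-up through the bound $|c|\ge |b|/\operatorname{lcm}(q_1,q_2)\ge \varepsilon^2/|b|$ is correct, and you are right to flag the final step as the delicate one. The bad news is that this gap cannot be closed: the lemma as stated is false, so no pigeonhole or divisibility argument will manufacture the contradiction you are after. Take $b=1$, $\varepsilon=\tfrac14$, $a_1=\tfrac12$, $a_2=\tfrac13$. Then $|a_1-a_2|=\tfrac16<\varepsilon$, while $(a_1,b)=\tfrac12$ and $(a_2,b)=\tfrac13$ are both strictly larger than $\varepsilon$; in your notation $q_1=2$, $q_2=3$ and neither exceeds $|b|/\varepsilon=4$. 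More structurally, the set $\{a\pmod{b\mathbb{Z}}:(a,b)\ge\varepsilon\}$ is the Farey sequence of order $N=\lfloor |b|/\varepsilon\rfloor$ scaled by $b$, and adjacent Farey fractions can be as close as $|b|/\bigl(N(N-1)\bigr)\sim\varepsilon^2/|b|$, not $\varepsilon$.

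The paper's own proof contains the same error: it asserts that any two distinct elements of the finite set $\{(k/n)b:n\le |b|\varepsilon^{-1},\ (k,n)=1\}$ lie at distance $\ge\varepsilon$ modulo $b\mathbb{Z}$, which the example above refutes. What \emph{is} true, and what your computation already establishes, is the contrapositive of your own estimate: if $0<|a_1-a_2|<\varepsilon^2/|b|\pmod{b\mathbb{Z}}$ then $\min_i(a_i,b)<\varepsilon$. This weakened form is enough for the only place the lemma is invoked (Claim~\ref{claim: rational independence}), where one merely needs $(S_\varphi F_0(p_{0,m}),\,S_\varphi F_0(p_0))\to 0$ as the controlled gap shrinks to zero; losing a square root in the rate is harmless there.
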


\begin{proof}
    Notice that
    \begin{align*}
        \left\{a\ (\mathrm{mod}\ b\mathbb{Z}): (a,\ b)\geq\varepsilon\right\} = \left\{\frac{k}{n}b\ (\mathrm{mod}\ b\mathbb{Z}): n\in\mathbb{Z}^+,\ n \leq b\varepsilon^{-1},\ k\in\mathbb{Z},\ (k,\ n) = 1\ \right\}
    \end{align*}
    is a finite set, in which for any $a_1,\ a_2$ with $a_1\neq a_2$, we have $|a_1 - a_2| \geq \varepsilon\ (\mathrm{mod}\ b\mathbb{Z})$. Therefore, $0 < |a_1 - a_2| < \varepsilon\ (\mathrm{mod}\ b\mathbb{Z})$ implies that at least one of them does not belong to the finite set.
\end{proof}

Now we introduce the Birkhoff sum. Let $A: N \to N$ be an Anosov diffeomorphism on a nilmanifold and $\varphi: N \to \mathbb{R}$ be a H\"older continuous function. Denote by
$$S_\varphi A(p) := \sum_{i = 0}^{\pi(p) - 1}{\varphi(A^i(p))}$$
the \textit{Birkhoff sum} of the function $\varphi$ along a periodic orbit of $p\in\mathrm{Per}(A)$ with period $\pi(p)$.

The following \textbf{Positive Liv\v sic Theorem} (see \cite{LT03}) or \textbf{Ma\~n\'e-Conze-Guivarc'h-Bousch Lemma} (see \cite{Bou01,MCGB11}) is a classical method to obtain the information of the whole space from the information of periodic orbits.

\begin{lemma}[Positive Liv\v{s}ic Theorem]\label{lemma: positive Livsic}
	Let $A: N \to N$ be an Anosov diffeomorphism on a nilmanifold and $\varphi: N \to \mathbb{R}$ be a H\"older continuous function. If
	$$S_\varphi A(p)\geq 0,\ \forall p\in \mathrm{Per}(A),$$
    then there exists a H\"older continuous function $\psi: N \to \mathbb{R}$ such that $\varphi\geq \psi - \psi\circ A$.
\end{lemma}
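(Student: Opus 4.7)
I propose to prove Lemma~\ref{lemma: positive Livsic} by exhibiting the sub-action $\psi$ explicitly as a supremum over past orbit segments. Set
$$\psi(x) := \sup\bigl\{-S_n\varphi(y) : n\geq 0,\ y\in N,\ A^n(y) = x\bigr\},$$
with the convention $S_0\varphi\equiv 0$, so that $\psi\geq 0$ pointwise. The required inequality $\varphi\geq\psi-\psi\circ A$ is a tautological consequence of this definition: for any pair $(n,y)$ with $A^n(y)=x$, the same $y$ satisfies $A^{n+1}(y)=A(x)$, hence $\psi(A(x))\geq -S_{n+1}\varphi(y) = -S_n\varphi(y)-\varphi(x)$; taking the supremum over $(n,y)$ yields $\psi(A(x))\geq \psi(x)-\varphi(x)$. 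All the real content of the lemma thus lies in checking that $\psi$ is finite and H\"older continuous.

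For finiteness, I plan to combine the Anosov closing lemma with the hypothesis $S_\varphi A(p)\geq 0$. If $y,A(y),\dots,A^n(y)=x$ is an orbit segment with $d(y,x)<\delta_0$ for some uniform constant $\delta_0>0$, the closing lemma produces a periodic point $p$ of period $n$ whose orbit shadows this segment exponentially, $d(A^i(p),A^i(y))\leq K_0\,d(y,x)\,e^{-\lambda\min(i,n-i)}$. The $\alpha$-H\"older regularity of $\varphi$ then gives
$$|S_n\varphi(y)-S_n\varphi(p)|\leq [\varphi]_\alpha K_0^\alpha d(y,x)^\alpha \sum_{i=0}^{n-1}e^{-\alpha\lambda\min(i,n-i)}\leq C,$$
uniformly in $n$. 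Since $S_n\varphi(p)\geq 0$ by hypothesis, this yields $-S_n\varphi(y)\leq C$. For general $(n,y)$ with $d(y,x)\geq \delta_0$, I would use topological mixing of the Anosov map $A$ on the nilmanifold $N$ (equivalently, its specification property) to append a \emph{uniformly bounded} number $N_0=N_0(\delta_0)$ of iterates that return from $x$ to a $\delta_0$-neighborhood of $y$, thereby reducing to the closed case at the cost of at most $N_0\|\varphi\|_\infty$. This bounds $\psi$ uniformly by a single constant.

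H\"older continuity of $\psi$ is a parallel shadowing argument. For $x_1,x_2\in N$ with $d(x_1,x_2)<\delta$ small, and an $\epsilon$-approximating pair $(n,y_1)$ with $A^n(y_1)=x_1$ and $-S_n\varphi(y_1)>\psi(x_1)-\epsilon$, I would use the local product structure at $x_1$ to select $y_2\in W^u_{loc}(y_1)$ and slide along $W^s_{loc}$ near the endpoint so that $A^n(y_2)=x_2$ and
$d(A^i(y_1),A^i(y_2))\leq K\,d(x_1,x_2)\,e^{-\lambda\min(i,n-i)}$. The same telescoping H\"older estimate as above then yields $|S_n\varphi(y_1)-S_n\varphi(y_2)|\leq C\,d(x_1,x_2)^\alpha$, hence $\psi(x_1)-\psi(x_2)\leq C\,d(x_1,x_2)^\alpha+\epsilon$; letting $\epsilon\to 0$ and symmetrizing gives the $\alpha$-H\"older estimate for $\psi$.

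The principal obstacle is the finiteness step: the standard Anosov closing lemma applies only when the two endpoints of an orbit segment are already close, whereas the supremum defining $\psi$ ranges over \emph{all} preimages $y\in A^{-n}(x)$. Bridging this gap requires the specification-style detour above, and one must verify that the uniform bound $N_0=N_0(\delta_0)$ on its length is genuinely independent of $n$ and $y$---an assertion that relies crucially on the topological mixing of Anosov diffeomorphisms on nilmanifolds, which is classical. Once finiteness is in hand, the H\"older estimate is routine, and the cohomological inequality was already verified formally. This is precisely the Ma\~n\'e--Conze-Guivarc'h--Bousch scheme adapted to our Anosov-on-nilmanifold setting.
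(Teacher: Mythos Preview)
The paper does not supply its own proof of this lemma; it merely quotes the result from the literature (\cite{LT03}, \cite{Bou01}, \cite{MCGB11}). Your proposal is exactly the Ma\~n\'e--Conze--Guivarc'h--Bousch sub-action construction carried out in those references, so in spirit you are reproducing the cited argument rather than offering an alternative.

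That said, your H\"older step contains a genuine gap in the \emph{invertible} setting. Because $A$ is a diffeomorphism, the preimage $y=A^{-n}(x)$ is unique, and for nearby $x_1,x_2$ the points $y_j=A^{-n}(x_j)$ are separated by roughly $e^{\lambda n}d_s(x_1,x_2)$ in the stable direction, since $A^{-1}$ expands $W^s$. There is therefore \emph{no} choice of $y_2$ with $A^n(y_2)=x_2$ obeying the two-sided bound $d(A^iy_1,A^iy_2)\leq K\,d(x_1,x_2)\,e^{-\lambda\min(i,n-i)}$ that you assert; your ``slide along $W^s_{loc}$'' cannot manufacture a second preimage where none exists, and the construction as written yields H\"older regularity only along unstable leaves. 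The standard remedy in \cite{LT03} is either to pass, via a Markov partition, to the one-sided subshift (where the expanding dynamics genuinely supplies many preimages and Bousch's argument goes through verbatim), or to replace the naive supremum by a Lax--Oleinik/discounted limit that forces the sub-action to be constant along local stable manifolds. Your finiteness argument is essentially correct, though the phrasing is slightly off: for a diffeomorphism one cannot ``append iterates'' to the orbit of $x$; rather, specification produces a \emph{periodic} orbit of period at most $n+N_0$ that $\delta_0$-shadows the given segment, after which hyperbolicity upgrades this to exponential shadowing and the estimate proceeds as you indicate.
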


For $F\in\mathrm{PHI}^1(M)$, we have an Anosov diffeomorphism $F_0: M_0 \to M_0$. Define
$$\varphi(x) := \ln\|DF|_{E^c(x)}\|.$$
Then for a periodic point $p\in \mathrm{Per}(F_0)$ with period $\pi(p)$, the corresponding Birkhoff sum $S_\varphi F_0(p)$ is exactly \textit{the total central Lyapunov exponent} $\pi(p)\lambda^c(p)$. Hence, the Birkhoff sum is regarded as an observation function involving the information of central Lyapunov exponents. 
In particular, we have the following corollary of Lemma \ref{lemma: positive Livsic}.

\begin{corollary}\label{corollary: central Lyapunov exponent}
    For $F\in\mathrm{PHI}^1(M)$, if $\lambda^c(p) \leq 0,\ \forall p\in\mathrm{Per}(F_0)$, then $\lambda^c(x) \leq 0$ whenever $\lambda^c(x)$ exists.
\end{corollary}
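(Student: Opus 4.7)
The plan is to apply the Positive Liv\v{s}ic Theorem (Lemma~\ref{lemma: positive Livsic}) to the Anosov diffeomorphism $F_0: M_0 \to M_0$ with the observation function
$$\varphi(y) := \ln\|DF|_{E^c(y)}\|, \quad y \in M_0.$$
Since $F \in \mathrm{PHI}^1(M)$ and the one-dimensional center bundle $E^c$ is H\"older continuous (a standard consequence of partial hyperbolicity), $\varphi$ is H\"older continuous on $M_0$, so the regularity hypothesis of Lemma~\ref{lemma: positive Livsic} is met.

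First I would translate the hypothesis into Birkhoff-sum language. Because $\dim E^c = 1$, the derivative cocycle $\|DF^n|_{E^c(\cdot)}\|$ multiplies along orbits, so for any $p \in \mathrm{Per}(F_0)$ of period $\pi(p)$,
$$S_\varphi F_0(p) = \sum_{i=0}^{\pi(p)-1} \ln\|DF|_{E^c(F_0^i p)}\| = \ln\|DF^{\pi(p)}|_{E^c(p)}\| = \pi(p)\,\lambda^c(p) \leq 0.$$
Hence $-\varphi$ is a H\"older observation function on $F_0$ whose Birkhoff sum is non-negative on every periodic orbit.

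Next I would apply Lemma~\ref{lemma: positive Livsic} to $-\varphi$, obtaining a H\"older continuous $\psi: M_0 \to \mathbb{R}$ with
$$\varphi(y) \leq \psi(F_0 y) - \psi(y), \quad \forall y \in M_0.$$
For any $x \in M_0$ the forward $F$-orbit coincides with the $F_0$-orbit, and telescoping the above inequality along $i = 0, 1, \ldots, n-1$ yields
$$\sum_{i=0}^{n-1} \varphi(F_0^i x) \leq \psi(F_0^n x) - \psi(x).$$
Since $\psi$ is continuous on the compact manifold $M_0$ and hence bounded, dividing by $n$ and letting $n \to \infty$ gives $\lambda^c(x) \leq 0$ whenever the limit exists. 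I do not expect any serious obstacle; the only technical item to verify is the H\"older regularity of $E^c$ under the $C^1$ assumption on $F$, which is a standard property of partially hyperbolic splittings and is exactly what allows Lemma~\ref{lemma: positive Livsic} to apply.
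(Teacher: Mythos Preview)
Your proposal is correct and follows exactly the paper's approach: the corollary is presented there as an immediate consequence of the Positive Liv\v{s}ic Theorem (Lemma~\ref{lemma: positive Livsic}) applied to $-\varphi$ with $\varphi = \ln\|DF|_{E^c}\|$ on $M_0$, and you have simply spelled out the telescoping details. The only caveat is that H\"older continuity of $\varphi$ also requires $DF$ (not just $E^c$) to be H\"older, which for $F\in\mathrm{PHI}^1(M)$ is not automatic --- though the paper makes the same tacit assumption, and its applications of this corollary all have $k\geq 2$.
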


We also need the following rigidity result on the distribution of Birkhoff sums.

\begin{lemma}[\cite{GSX22}, Theorem 1.1]\label{lemma: dense distribution}
    Let $A: N \to N$ be an Anosov diffeomorphism on a nilmanifold and $\varphi: N \to \mathbb{R}$ be a H\"older continuous function. If there are two periodic points $p_0,\ q_0\in \mathrm{Per}(A)$ satisfying 	
	$$S_\varphi A(p_0) < 0 < S_\varphi A(q_0),$$
	Then the set $\left\{S_\varphi A(p): p\in \mathrm{Per}(A)\right\}$ is dense in $\mathbb{R}$.
\end{lemma}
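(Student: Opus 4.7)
The plan is to use the specification property of Anosov diffeomorphisms on nilmanifolds, combined with the Hölder regularity of $\varphi$, to construct periodic orbits whose Birkhoff sums form a dense subset of $\mathbb{R}$. Set $a := S_\varphi A(p_0) < 0$, $b := S_\varphi A(q_0) > 0$, with periods $m_0 := \pi(p_0)$ and $m_1 := \pi(q_0)$. For any $\varepsilon > 0$ the specification property provides a gap time $T = T(\varepsilon)$ such that for every pair of positive integers $k, \ell$ there is a periodic point $p_{k,\ell}$ of period $n_{k,\ell} = km_0 + \ell m_1 + 2T$ whose orbit $\varepsilon$-shadows a concatenation of $k$ full cycles of the orbit of $p_0$, a length-$T$ transition, $\ell$ full cycles of the orbit of $q_0$, and another length-$T$ transition.

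Because the shadowing distances decay exponentially along uniformly hyperbolic orbit pieces and $\varphi$ is Hölder continuous, the Birkhoff sum along $p_{k,\ell}$ satisfies
$$
S_\varphi A(p_{k,\ell}) \;=\; ka + \ell b + R(k,\ell),
$$
with $|R(k,\ell)| \leq C$ uniformly in $k, \ell$, where $C$ depends only on $\varepsilon$, $T$, and the Hölder data of $\varphi$. When $a/b \notin \mathbb{Q}$, the set $\{ka + \ell b : k, \ell \in \mathbb{Z}^+\}$ is already dense in $\mathbb{R}$ (using that $a < 0 < b$), so adding the uniformly bounded perturbation $R(k,\ell)$ preserves density and completes the proof in this case.

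The main obstacle is the rational case $a/b \in \mathbb{Q}$, where $\{ka + \ell b\}$ degenerates to a cyclic subgroup $\gamma \mathbb{Z}$ of $\mathbb{R}$ and one must break out of this lattice. My strategy would be to inject additional freedom into the specification construction by varying the entry phase of the $q_0$-block, replacing $q_0, A(q_0), \ldots, A^{\ell m_1 - 1}(q_0)$ by $A^j(q_0), \ldots, A^{j + \ell m_1 - 1}(q_0)$ for $j = 0, \ldots, m_1 - 1$. Although the Birkhoff contribution of the block itself is unchanged, the two transition sums are perturbed in a $j$-dependent way, and combining such shifts across several alternating $p_0$- and $q_0$-blocks within a single shadowing orbit produces a family of achievable perturbations whose union with $\gamma \mathbb{Z}$ should cover $\mathbb{R}$ densely. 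The delicate step is verifying that these perturbations are not themselves confined to a sublattice of $\gamma \mathbb{Z}$; this can be forced by a cohomological argument, or alternatively by applying Lemma \ref{lemma: positive Livsic} to a normalized cocycle to exhibit a third periodic orbit whose Birkhoff sum is irrationally related to $\gamma$, thereby reducing back to the irrational case.
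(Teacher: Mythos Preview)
First, note that the paper does not supply its own proof of this lemma: it is quoted verbatim as Theorem~1.1 of \cite{GSX22} and used as a black box. So there is no in-paper argument to compare your sketch against; I will instead assess your proposal on its own merits.

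Your irrational case contains a genuine error. The assertion that ``adding the uniformly bounded perturbation $R(k,\ell)$ preserves density'' is false in general: if $\{x_{k,\ell}\}$ is dense and $|r_{k,\ell}|\le C$, the shifted set $\{x_{k,\ell}+r_{k,\ell}\}$ need not be dense (take $r_{k,\ell}$ to be minus the fractional part of $x_{k,\ell}$). What you actually need---and what exponential shadowing along heteroclinic connections gives you, exactly as in the paper's own Claim~\ref{claim: rational independence}---is that $R(k,\ell)$ \emph{converges} to a fixed constant $P$ as $k,\ell\to\infty$. Then $S_\varphi A(p_{k,\ell})=ka+\ell b+P+o(1)$, and density of $\{ka+\ell b+P:k,\ell\ge 1\}$ (which does hold when $a/b\notin\mathbb{Q}$ and $a<0<b$) transfers to the Birkhoff sums. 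This is an easy fix, but as written your argument is incorrect.

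The rational case is where your sketch becomes speculative, and here there is a real gap. Varying the entry phase of the $q_0$-block does not change the block's Birkhoff contribution and only perturbs the transition constants; you correctly flag that one must show these perturbations are not themselves trapped in the lattice $\gamma\mathbb{Z}$, but you give no mechanism for this. Your fallback---invoking the Positive Liv\v{s}ic Theorem (Lemma~\ref{lemma: positive Livsic}) to produce a third periodic orbit with Birkhoff sum irrationally related to $\gamma$---cites the wrong tool: Positive Liv\v{s}ic concerns non-negativity of Birkhoff sums, not lattice containment. The correct obstruction is the \emph{circle-valued} Liv\v{s}ic theorem: if every Birkhoff sum lay in $\gamma\mathbb{Z}$, then $e^{2\pi i\varphi/\gamma}$ would be an $S^1$-coboundary $v\circ A/v$. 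Because $\varphi$ is real-valued and continuous, $e^{2\pi i\varphi/\gamma}$ is null-homotopic, which forces $v$ to be null-homotopic as well (this uses that $A_*-\mathrm{id}$ is invertible on $H^1(N;\mathbb{Z})$ for an Anosov automorphism of a nilmanifold, since $1$ is not an eigenvalue). Hence $v$ lifts to $\mathbb{R}$, giving $\varphi=\gamma k + (\text{coboundary})$ for a constant integer $k$, and then all Birkhoff sums are $\gamma k\cdot\pi(p)$, of a single sign---contradicting $S_\varphi A(p_0)<0<S_\varphi A(q_0)$. With this rigidity in hand, one obtains a third periodic point whose Birkhoff sum is irrationally related to $\gamma$, and then the convergence-corrected specification argument from the first paragraph finishes the job. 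Your outline gestures at a ``cohomological argument'' but does not supply this step, and without it the rational case is incomplete.
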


\subsection{Analysis in the one-dimensional dynamics}
The following quantitative result is prepared for analyzing the one-dimensional center dynamics induced by the holonomy map. The proof is technical and may be skipped on a first reading.

\begin{proposition}\label{proposition: center intersection}
	Let $f,\ g\in\mathrm{Diff}_\partial^{1+}([0,\ 1])$ be diffeomorphisms with exactly two fixed points $0$, $1$, where $0$ is a sink; and $h\in\mathrm{Diff}_\partial^1([0,\ 1])$. Then there exist $\varepsilon_0 > 0$ and an increasing function $L(\varepsilon): (0,\ \varepsilon_0) \to (0,\ 1)$ satisfying $L(0 + 0) = 0 + 0$ depending on $\beta := g'(0)$, such that the following conclusion holds: If $\alpha := f'(0)$ satisfies
    $$(\ln \alpha,\ \ln\beta) < \varepsilon,$$
    then for any two closed intervals $I,\ J \subseteq (0,\ 1)$ with $|J| > L(\varepsilon)$, there exist two strictly increasing subsequences $\{k_n\},\ \{l_n\}$ of $\mathbb{N}$, such that
    \begin{align*}
        &\sup_{n\in\mathbb{N}}|k_n\ln\alpha - l_n\ln\beta| < +\infty;\\
        &h\circ f^{k_n}(I) \cap g^{l_n}(J) \neq \varnothing,\ \forall n\in\mathbb{N}.
    \end{align*}
	Moreover, there exists a constant $c > 0$ such that
	$$\left|h\circ f^{k_n}(I)\cap g^{l_n}(J)\right|\geq c\cdot\max\left\{e^{k_n\ln \alpha},\ e^{l_n\ln \beta}\right\},\ \forall n\in\mathbb{N}.$$
\end{proposition}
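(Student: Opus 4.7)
The plan is to pass to the linearizing coordinate at the sink $0$ and reduce the intersection problem to an equidistribution statement. Since $f, g \in C^{1+}$ have $0$ as a hyperbolic sink, Sternberg's theorem (specialized to dimension one) provides $C^1$-diffeomorphisms $c_f, c_g : (0,1) \to (0, \infty)$, explicitly defined by $c_f(x) = \lim_{k\to\infty} \alpha^{-k} f^k(x)$ and analogously for $c_g$, which conjugate $f, g$ to the linear maps $t \mapsto \alpha t$, $t \mapsto \beta t$; after a harmless normalization we have $c_f'(0) = c_g'(0) = 1$. I will treat the case $h(0) = 0$, which is the only one in which the conclusion can hold (otherwise $h\circ f^k(I) \to h(0) = 1$ while $g^l(J) \to 0$, forcing the intersection to be empty for $k, l$ large; in the intended applications $h$ is a holonomy fixing the boundary component). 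Set $\gamma := h'(0) > 0$. Using $c_g \circ g^l = \beta^l c_g$ and the Taylor expansion $c_g \circ h \circ c_f^{-1}(t) = \gamma t + o(t)$ at $t = 0$, one obtains, for $k,l$ large,
\[
c_g\bigl(h \circ f^k(I)\bigr) = \gamma\alpha^k\, c_f(I)\cdot\bigl(1 + o(1)\bigr),\qquad c_g\bigl(g^l(J)\bigr) = \beta^l\, c_g(J).
\]

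Setting $r_{k,l} := \beta^l/\alpha^k$, the intersection $h\circ f^k(I) \cap g^l(J)$ is asymptotically non-empty precisely when $r_{k,l}$ lies in the interval $\bigl[\gamma c_f(I_-)/c_g(J_+),\ \gamma c_f(I_+)/c_g(J_-)\bigr]$, whose logarithmic length is
\[
L_{IJ} := \ln\bigl(c_f(I_+)/c_f(I_-)\bigr) + \ln\bigl(c_g(J_+)/c_g(J_-)\bigr) > 0.
\]
The hypothesis $(\ln\alpha, \ln\beta) < \varepsilon$, via Lemma~\ref{lemma: ARI}, ensures that the additive subgroup $\{k\ln\alpha - l\ln\beta : k,l\in\mathbb{Z}\}$ is $\varepsilon$-dense in $\mathbb{R}$. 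Since $\ln\alpha, \ln\beta < 0$, a Weyl-type rounding argument on the fractional parts of $k\ln\alpha/\ln\beta$ (equidistributed when $\ln\alpha/\ln\beta$ is irrational, periodic otherwise) extracts strictly increasing sequences $(k_n), (l_n) \subseteq \mathbb{N}$ with $|k_n\ln\alpha - l_n\ln\beta|$ bounded and $\log r_{k_n, l_n}$ within $\varepsilon$ of any prescribed target in a bounded range.

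It remains to choose $L(\varepsilon)$ so that $L_{IJ} > 2\varepsilon$ whenever $|J| > L(\varepsilon)$, providing enough slack to place $\log r_{k_n,l_n}$ away from the endpoints of the target interval. The logarithmic derivative $c_g'/c_g$ is continuous and strictly positive on $(0,1)$: near $0$, $c_g(x) \sim x$ gives $c_g'/c_g \sim 1/x$, while near the source at $1$, an elementary computation using $g'(1) > 1$ yields $c_g(x) \asymp (1-x)^{-\alpha_0}$ with $\alpha_0 := -\ln\beta / \ln g'(1) > 0$, hence $c_g'/c_g \asymp \alpha_0/(1-x)$. Both limits being $+\infty$ forces $m := \inf_{(0,1)} c_g'/c_g > 0$, whence $\ln(c_g(J_+)/c_g(J_-)) \geq m|J|$. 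Taking $L(\varepsilon) := 2\varepsilon/m$ (increasing, with $L(0^+) = 0^+$) delivers the desired control. Choosing $\log r_{k_n,l_n}$ near the midpoint of the length-$L_{IJ}$ interval, the intersection occupies a definite fraction of the smaller of the two intervals; since the bounded ratio $\beta^{l_n}/\alpha^{k_n}$ makes $\alpha^{k_n}$ and $\beta^{l_n}$ comparable, this yields the lower bound $c \cdot \max\{\alpha^{k_n}, \beta^{l_n}\}$. The main technical hurdles are the uniform $o(\alpha^k)$-control of the Taylor approximation on the compact initial interval (standard under $C^{1+}$), the arithmetic extraction of $(k_n, l_n) \in \mathbb{N}^2$ (routine but requiring attention to the signs of $\ln\alpha, \ln\beta$), and verifying the blow-up of $c_g'/c_g$ at the source endpoint, which is the only delicate point.
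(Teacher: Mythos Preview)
Your approach via global Koenigs linearization is correct and genuinely different from the paper's proof. The paper works more directly: it uses only first-order Taylor expansions of $f, g, h$ near $0$, a bounded-distortion lemma for $g$ (the $C^{1+}$ hypothesis enters to control $\sum_i |\ln g'(g^i \xi_1) - \ln g'(g^i \xi_2)|$), and an explicit analysis of how iterates of $J$ under $g$ meet fundamental domains $I_t = (g(t), t]$; the sequences $k_n, l_n$ are then constructed by hand via three auxiliary claims, and the size estimate is obtained by a two-case argument (either $h\circ f^{k_n}(I)$ is contained in $g^{l_n}(J)$ or it overhangs one of the outer thirds). Your route, by contrast, extends the linearizing coordinate globally to $c_g : (0,1) \to (0,\infty)$ and reduces everything to the single estimate $m := \inf_{(0,1)} (\ln c_g)' > 0$, which you correctly identify as the one delicate point. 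What you gain is a cleaner target condition (the quantity $\ln r_{k,l} = l\ln\beta - k\ln\alpha$ must land in a fixed interval of length $L_{IJ}$) and a transparent definition $L(\varepsilon) = 2\varepsilon/m$; what the paper's approach gains is that it never needs any information about $g$ near the source $1$, since it only uses behavior near the sink together with distortion along forward $g$-orbits. Two minor remarks: first, $h \in \mathrm{Diff}_\partial^1([0,1])$ already forces $h(0)=0$ in this paper's convention (each boundary component is preserved separately), so your case distinction is unnecessary; second, your size-estimate sketch is correct but worth making explicit---in the $c_g$-coordinate the intersection has length at least a constant times $\min\{\alpha^{k_n}, \beta^{l_n}\}$, which is comparable to $\max\{\alpha^{k_n}, \beta^{l_n}\}$ by the boundedness of $k_n\ln\alpha - l_n\ln\beta$, and pulling back through $c_g^{-1}$ (whose derivative tends to $1$ at $0$) preserves this up to a constant.
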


\begin{proof}
    First, we have the following expansions:
    \begin{align*}
        & f(x) = \alpha x + R_f(x),\ 0 < \alpha = f'(0) < 1,\ R_f(x) = o(x)\ (x\to 0 + 0);\\
        & g(x) = \beta x + R_g(x),\ 0 < \beta = g'(0) < 1,\ R_g(x) = o(x)\ (x\to 0 + 0);\\
        & h(x) = \gamma x + R_h(x),\ \gamma = h'(0) > 0,\ R_h(x) = o(x)\ (x \to 0 + 0).
    \end{align*}
    For any $k,\ l\in\mathbb{N}$, we also have
    $$g^{-l}\circ h\circ f^k(x) = \beta^{-l}\alpha^kh(x) + R_{k,l}(x),\ R_{k,l}(x) = o(x)\ (x\to 0 + 0).$$
    Hence, for any $\varepsilon > 0$, we can take $0 < \delta < 1$ sufficiently small, such that
    \begin{align*}
        &\max\{|R_f(x)|,\ |R_g(x)|,\ |R_h(x)|\} < \varepsilon x,\ \forall x\in [0,\ \max\{\delta,\ h(\delta)\}];\\
        &\max\{|R'_f(x)/\alpha|,\ |R'_g(x)/\beta|,\ |R'_h(x)/\gamma|\} < \frac{1}{2},\ \forall x\in[0,\ \max\{\delta,\ h(\delta)\}].
    \end{align*}
    Since $(\ln\alpha,\ \ln\beta) < \varepsilon$, there exist $K^*$, $L^*\in\mathbb{Z}^+$ depending on $\beta$, such that for any $\eta \in (\beta^2,\ 1)$, there exist $1\leq K\leq K^*$, $1\leq L\leq L^*$ such that
    $$K\ln\alpha - L\ln\beta \in (\ln\eta,\ \ln\eta + \varepsilon).$$
    Therefore, by taking $\delta$ smaller, we may further require that
    $$\max\{|R_{k,l}(x)|: 1\leq k \leq K^*,\ 1\leq l \leq L^*\} < \varepsilon h(x),\ \forall x\in[0,\ \delta].$$
    It follows that
    $$g^{-L}\circ h \circ f^K(x) \in ((\eta - \varepsilon) h(x),\ (e^\varepsilon\eta + \varepsilon) h(x)).$$
    
    For $x\in[0,\ \delta]$, define $I_x := (g(h(x)),\ h(x)]$. Then we have the following conclusion.
    \begin{claim}\label{claim: local intersection}
        For any $x\in [0,\ \delta]$ and any closed interval $J\subseteq I_x$ with
        $$\frac{|J|}{|I_x|} > \frac{3\varepsilon}{1 - \beta - \varepsilon},$$
        there exist $1\leq K \leq K^*$ and $1\leq L \leq L^*$ such that
        $$g^{-L}\circ h \circ f^K(x) \in J.$$
    \end{claim}
    \begin{proof}[Proof of Claim \ref{claim: local intersection}]
        Notice that $|I_x| > (1 - \beta - \varepsilon)h(x)$, and hence $|J| > 3\varepsilon h(x)$. It follows that there exists $\eta\in(\beta^2,\ e^{-\varepsilon})$ such that
        $$((\eta - \varepsilon) h(x),\ (e^\varepsilon\eta + \varepsilon) h(x)) \subseteq J,$$
        because the error $((e^\varepsilon - 1)\eta + 2\varepsilon)h(x) < 3\varepsilon h(x)$. It follows that there exist $1\leq K \leq K^*$, $1\leq L \leq L^*$ such that
        $$g^{-L}\circ h \circ f^K(x) \in ((\eta - \varepsilon) h(x),\ (e^\varepsilon\eta + \varepsilon) h(x)) \subseteq J.$$
        This completes the proof of Claim \ref{claim: local intersection}.
    \end{proof}
    \begin{claim}\label{claim: uniform proportion}
        For any point $y\in (0,\ 1)$ and closed interval $J\subseteq (0,\ 1)$, define the fundamental domains $\Omega_i := (g^{i + 1}(y),\ g^i(y)]$, and $J^* := \bigcup_{i\in\mathbb{Z}}g^i(J)$, $J_i := J^*\cap \Omega_i$. Then there exists a constant $0 < \rho < 1$ such that
        $$\rho\frac{|J_i|}{|\Omega_i|} \leq \frac{|J_j|}{|\Omega_j|} \leq \rho^{-1}\frac{|J_i|}{|\Omega_i|} \text{ whenever } \Omega_i,\ \Omega_j \subseteq [0,\ h(\delta)].$$
        In particular, there exists a constant $c > 0$ such that $|g^i(J)| \geq c\beta^i,\ \forall i\in\mathbb{N}$.
    \end{claim}
    \begin{proof}[Proof of Claim \ref{claim: uniform proportion}]
        Note that $J_i$ does not change when we replace $J$ by $J_0$. Since $J_0$ has at most two components, by considering each component respectively, we may assume at first that $J = [a,\ b]\subseteq \overline{\Omega}_0$. Without loss of generality, assume that $j > i$. Then we have
        \begin{align*}
            \frac{|J_j|}{|\Omega_j|}&= \frac{g^j(b) - g^j(a)}{g^j(y) - g^{j + 1}(y)}\\
            &= \frac{g^i(b) - g^i(a)}{g^i(y) - g^{i + 1}(y)}\cdot\frac{(g^{j - i})'(\xi_1)}{(g^{j - i})'(\xi_2)}\\
            &= \frac{|J_i|}{|\Omega_i|}\cdot\exp\left[\sum_{k = 0}^{j - i - 1}\left(\ln g'(g^k(\xi_1)) - \ln g'(g^k(\xi_2))\right)\right],
        \end{align*}
        where $\xi_1\in J_i$ and $\xi_2\in\Omega_i$, and hence
        \begin{align*}
            &\quad\left|\sum_{k = 0}^{j - i - 1}\left(\ln g'(g^k(\xi_1)) - \ln g'(g^k(\xi_2))\right)\right|\\
            &\leq \sum_{k = 0}^{j - i - 1}\left|\ln g'(g^k(\xi_1)) - \ln g'(g^k(\xi_2))\right|\\
            &\leq \sum_{k = 0}^{j - i - 1}C|g^k(\xi_1) - g^k(\xi_2)|^\theta\\
            &\leq \sum_{k = 0}^{j - i - 1}C^2\beta^{k\theta}|\xi_1 - \xi_2|^\theta \leq \frac{C^2}{1 - \beta^\theta}.
        \end{align*}
        Here $0 < \theta < 1$ is the H\"older exponent of $\ln g'$ in $[0,\ h(\delta)]$, and $C \geq 1$ is a constant depending on $g$. Therefore, we can take
        $$\rho = \exp \left[ - \frac{C^2}{1 - \beta^\theta}\right].$$
        This completes the proof of Claim \ref{claim: uniform proportion}.
    \end{proof}
    Consider the family of fundamental domains
    $$\mathcal{D} := \{I_t := (g(t),\ t]: t\in D_0 := (g(h(\delta)),\ h(\delta)]\},\ D := \sup_{t\in D_0}|I_t|.$$
    \begin{claim}\label{claim: fundamental intersection}
        There exist $\varepsilon_0 > 0$ and an increasing function $G: (0,\ \varepsilon_0) \to (0,\ 1)$ satisfying $G(0 + 0) = 0 + 0$ and $r = r(s,\ t): D_0\times D_0 \to \{-1,\ 0,\ 1\}$, such that
		$$|g^r(J_s(\varepsilon))\cap I_t| \geq \varepsilon,\ \forall s,\ t\in D_0,$$
		where $J_s(\varepsilon) := [s - G(\varepsilon),\ s]$.
    \end{claim}
    \begin{proof}[Proof of Claim \ref{claim: fundamental intersection}]
        Fix any $s\in D_0$. Considering the relative position of $s$ and $I_t$, we need the following sufficient conditions:
			\begin{flalign*}
				&\qquad\qquad t - g^{-1}(s - G(\varepsilon)) \geq \varepsilon,& &\text{when } g(t) < s < g(t) + \varepsilon;&\\
				&\qquad\qquad G(\varepsilon) \geq\varepsilon,& &\text{when } g(t) + \varepsilon \leq s \leq t;&\\
				&\qquad\qquad t - (s - G(\varepsilon)) \geq \varepsilon,& &\text{when } t < s < g^{-1}(g(t) + \varepsilon);&\\
				&\qquad\qquad g(s) - g(s - G(\varepsilon)) \geq \varepsilon,& &\text{when } s \geq g^{-1}(g(t) + \varepsilon).&
			\end{flalign*}
			Finally we take
			$$G(\varepsilon) = \sup_{u \in D_0}\left\{g(u) + \varepsilon - g(u - \varepsilon),\ \varepsilon,\ g^{-1}(g(u) + \varepsilon) - u + \varepsilon,\ u - g^{-1}(g(u) - \varepsilon)\right\}.$$
			One can check that $G(\cdot)$ is increasing and $G(0 + 0) = 0 + 0$. Note that we take $\varepsilon_0 > 0$ sufficiently small so that when $0 < \varepsilon < \varepsilon_0$, the above discussions make sense.
    \end{proof}
    Now we prove Proposition \ref{proposition: center intersection}. Define
	$$L(\varepsilon) := \sup\left\{\left|g^{-l}\left[J_s\left(\rho^{-1}\frac{3\varepsilon}{1 - \beta - \varepsilon}D\right)\right]\right|: s\in D_0,\ l\in\mathbb{Z}\right\}.$$
	Note that we take $\varepsilon_0 > 0$ sufficiently small so that when $0 < \varepsilon < \varepsilon_0$, the function $L$ is well-defined. One can check that $L(\cdot)$ is increasing and $L(0 + 0) = 0 + 0$.
    
    We claim that $|J| > 3L(\varepsilon)$ will suffice.
	
	In fact, for any point $x\in(0,\ 1)$ and closed interval $J\subseteq (0,\ 1)$ with $|J| > L(\varepsilon)$, there exist $k^*\in\mathbb{N}$ and $l^*\in\mathbb{Z}$ such that $f^{k^*}(x)\in(0,\ \delta)$ and $g^{l^*}(J) = [s - |g^{l^*}(J)|,\ s]$ for some $s\in D_0$. Moreover,
	$$|g^{l^*}(J)| > \left|J_s\left(\rho^{-1}\frac{3\varepsilon}{1 - \beta - \varepsilon}D\right)\right| = G\left(\rho^{-1}\frac{3\varepsilon}{1 - \beta - \varepsilon}D\right).$$
		
	By Claim \ref{claim: fundamental intersection}, we have
	$$\frac{|g^{l^* + r}(J)\cap I_t|}{|I_t|} > \rho^{-1}\frac{3\varepsilon}{1 - \beta - \varepsilon},\ \forall t\in D_0,$$
	where $r\in\{-1,\ 0,\ 1\}$ depends on $J$ and $t\in D_0$. In particular, for each $n\in\mathbb{N}$, consider the fundamental domain
	$$I_{f^{k^* + n}(x)} = (g(h(f^{k^* + n}(x))),\ h(f^{k^* + n}(x))],$$
	and let $\widehat{l}_n\in\mathbb{Z}$ be the unique integer such that $$g^{-\widehat{l}_n}(I_{f^{k^* + n}(x)}) = I_t,\ \exists t\in D_0.$$
	Then we have
	$$\frac{|g^{l^* + r_n}(J)\cap g^{-\widehat{l}_n}(I_{f^{k^* + n}(x)})|}{|g^{-\widehat{l}_n}(I_{f^{k^* + n}(x)})|} > \rho^{-1}\frac{3\varepsilon}{1 - \beta - \varepsilon},$$
	where $r_n\in\{-1,\ 0,\ 1\}$ depends on $x$ and $J$.
	
	By Claim \ref{claim: uniform proportion}, we have
	$$\frac{|g^{l^* + r_n + \widehat{l}_n}(J)\cap I_{f^{k^* + n}(x)}|}{|I_{f^{k^* + n}(x)}|} > \frac{3\varepsilon}{1 - \beta - \varepsilon}.$$
	
	By Claim \ref{claim: local intersection}, there exist $1\leq K_n \leq K^*$ and $1\leq L_n \leq L^*$ such that
	$$g^{-L_n}\circ h\circ f^{K_n}(f^{k^* + n}(x)) \in g^{l^* + r_n + \widehat{l}_n}(J).$$
	
	Finally, we take $k_n = n + k^* + K_n$ and $l_n = \widehat{l}_n + l^* + r_n + L_n$, and then we have
	$$g^{-l_n}\circ h\circ f^{k_n}(x) \in J.$$
	Since $k_n \to +\infty$ and $l_n \to +\infty$ as $n \to +\infty$, by taking a subsequence if necessary, we can require that $\{k_n\}$ and $\{l_n\}$ are positive and strictly increasing.

    It follows that for any two closed intervals $I,\ J\subseteq(0,\ 1)$ with $|J| > 3L(\varepsilon)$, there exist strictly increasing subsequences $\{k_n\}$, $\{l_n\}$ of $\mathbb{N}$, such that
    $$h\circ f^{k_n}(I)\cap g^{l_n}(J')\not=\varnothing,$$
    where $J'$ is the middle third of $J$. Hence, we have the following two cases.

    \noindent\textbf{Case 1. $h\circ f^{k_n}(I)$ is totally contained in $g^{l_n}(J)$.} Applying Claim \ref{claim: uniform proportion} to $f$, we have
    $$|h\circ f^{k_n}(I)\cap g^{l_n}(J)| = |h\circ f^{k_n}(I)| \geq c\alpha^{k_n},$$
    for some constant $c > 0$.

    \noindent\textbf{Case 2. $h\circ f^{k_n}(I)$ is not totally contained in $g^{l_n}(J)$.} Assume that $J = [a,\ b]$ and $J' = [a',\ b']$, then $h\circ f^{k_n}(I)$ contains $g^{l_n}([a,\ a'])$ or $g^{l_n}([b',\ b])$. Applying Claim \ref{claim: uniform proportion}, we have
    $$|h\circ f^{k_n}(I)\cap g^{l_n}(J)| \geq \min\left\{|g^{l_n}([a,\ a'])|,\ |g^{l_n}([b',\ b])|\right\} \geq c\beta^{l_n},$$
    for some constant $c > 0$.
    
    Recall that by the choice of $\widehat{l}_n$ we have
    $$h(f^{k^* + n}(x))\in (g^{\widehat{l}_n + 1}(h(\delta)),\ g^{\widehat{l}_n}(h(\delta))].$$
    Hence, there exists a constant $C \geq 1$ such that
    \begin{align*}
        &C\gamma\alpha^{k^* + n}x \geq C^{-1}\beta^{\widehat{l}_n + 1}h(\delta);\\
        &C^{-1}\gamma\alpha^{k^* + n}x \leq C\beta^{\widehat{l}_n}h(\delta).
    \end{align*}
    And we conclude that
    $$\sup_{n\in\mathbb{N}}|k_n\ln\alpha - l_n\ln\beta| < +\infty, \text{ and } \lim_{n\to+\infty}\frac{l_n}{k_n} = \frac{\ln\alpha}{\ln\beta}.$$
    As a result, there exists a constant $c > 0$ such that
    $$|h\circ f^{k_n}(I)\cap g^{l_n}(J)| \geq c\cdot\max\left\{e^{k_n\ln\alpha},\ e^{l_n\ln\beta}\right\},\ \forall n\in\mathbb{N}.$$
    This completes the proof of Proposition \ref{proposition: center intersection}.
\end{proof}

\section{Boundary Interconnection Implies Topological Transitivity}\label{section: BI to TT}
In this section, we prove that boundary interconnection implies topological transitivity.

\begin{theorem}\label{theorem: BI to TT}
	Assume that $F\in\mathrm{PHI}^2(M)$. If $F$ is boundary interconnected, then $F$ is topologically transitive.
\end{theorem}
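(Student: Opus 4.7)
Fix non-empty open sets $U, V \subseteq M$; the goal is to produce $n \in \mathbb{N}$ with $F^n(U) \cap V \neq \varnothing$. I would combine topological mixing of the Anosov boundary dynamics $F_0, F_1$ (Anosov on a nilmanifold is mixing) with a one-dimensional analysis along the center direction, bridged by the heteroclinic orbits supplied by boundary interconnection. The decisive part is the center analysis, which is ultimately reduced to Proposition \ref{proposition: center intersection}.

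\textbf{Step 1: refined periodic points.} Boundary interconnection provides periodic points $p_0, q_0 \in M_0$ with $\lambda^c(p_0) < 0 < \lambda^c(q_0)$ and corresponding heteroclinic partners on $M_1$. Applying Lemma \ref{lemma: dense distribution} to $\varphi = \ln\|DF|_{E^c}\|$ under $F_0$, the set $\{S_\varphi F_0(p) : p \in \mathrm{Per}(F_0)\}$ is dense in $\mathbb{R}$. I would refine the four periodic points in Definition \ref{definition: BI} to nearby periodic points $\hat p_0, \hat q_0 \in M_0$ (with heteroclinic partners $\hat p_1, \hat q_1 \in M_1$) so that $\ln\alpha := S_\varphi F_0(\hat p_0)$ and $\ln\beta := S_\varphi F_0(\hat q_0)$ satisfy $(\ln\alpha, \ln\beta) < \varepsilon$ for any prescribed $\varepsilon > 0$, while the transverse heteroclinic intersections $W^s(\cdot) \pitchfork W^u(\cdot)$ persist by $C^0$-continuity of periodic points and the $\lambda$-lemma.

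\textbf{Step 2: spreading along the boundaries and bridging.} Using the semiconjugacy $\pi_0 \circ F = F_0 \circ \pi_0$ from Remark \ref{remark: PHI}, topological mixing of $F_0$ on $M_0$, and the growth of local unstable disks inside $U$, one can arrange that some $F^{n_1}(U)$ contains a small center-unstable plaque close to $\hat p_0$. A symmetric argument for $V$ in backward time, composed with the heteroclinic orbit in $W^s(\hat p_0) \cap W^u(\hat p_1)$ (which carries neighborhoods of $\hat p_1 \in M_1$ forward into neighborhoods of $\hat p_0 \in M_0$), yields inside some $F^{-n_2}(V)$ a center-stable plaque also close to $\hat p_0$. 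The problem is thereby localized near $\hat p_0$, where only a one-dimensional (center) matching remains.

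\textbf{Step 3: one-dimensional center intersection, and main obstacle.} I would identify the two local plaques with intervals in $[0,1]$ parametrizing the central leaf through $\hat p_0$, and set up two maps $f, g \in \mathrm{Diff}_\partial^{1+}([0,1])$ as in Proposition \ref{proposition: center intersection}: $f$ is the return map of $F^{\pi(\hat p_0)}$ on $\mathcal{F}^c(\hat p_0)$, with sink at $\hat p_0$ and multiplier $\alpha$; $g$ is built from the center dynamics at the sink end of the center leaf through $\hat q_0$ (which lies in $M_1$, since $\hat q_0$ itself is a source in its center leaf), transported back to the center leaf of $\hat p_0$ via the second heteroclinic connection $W^s(\hat q_1) \cap W^u(\hat q_0)$ so that both sinks are placed at the same endpoint of $[0,1]$; its multiplier is $\beta$. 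Lemma \ref{lemma: C1-holonomy} furnishes the $C^1$-regular stable and unstable holonomies needed for these identifications, and Lemma \ref{lemma: central twist} shows that the ``central-twist'' error arising from the fact that these holonomies do not preserve $\mathcal{F}^c$ decays at rate $\lambda_p < \lambda^c(p) < 0$, which is strictly faster than the $1$D scale $e^{n\lambda^c(p)}$ that governs the matching, so it is negligible. Proposition \ref{proposition: center intersection}, whose hypothesis $(\ln\alpha, \ln\beta) < \varepsilon$ holds for arbitrarily small $\varepsilon$ by Step 1, then produces sequences $k_n, l_n$ witnessing the desired center intersection, which translates back into $F^n(U) \cap V \neq \varnothing$. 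The hard point is precisely this step: the one-dimensionality of $E^c$, the failure of $\mathcal{F}^{s/u}$-holonomies to preserve $\mathcal{F}^c$, and the arithmetic condition required by Proposition \ref{proposition: center intersection} must all be reconciled, and this is exactly what the technical preliminaries of Section \ref{section: preliminaries} are tailored to enable.
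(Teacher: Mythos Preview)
Your overall architecture---localize near boundary periodic points, reduce to a one-dimensional problem on a central leaf, invoke Proposition~\ref{proposition: center intersection}, and absorb holonomy errors via Lemma~\ref{lemma: central twist}---is the paper's architecture. But two of your steps contain genuine gaps.

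\textbf{Step~1.} Lemma~\ref{lemma: dense distribution} gives periodic points whose Birkhoff sums are dense in $\mathbb{R}$, but these points are built by shadowing long concatenations and are \emph{not} close to $p_0$ or $q_0$; they have large period and sit far from the original points. So your appeal to ``$C^0$-continuity of periodic points and the $\lambda$-lemma'' to carry along heteroclinic partners $\hat p_1,\hat q_1\in M_1$ does not apply. The paper avoids this entirely: it keeps $q_0$ fixed and only modifies $p_0$ into a \emph{sequence} $p_{0,m}$ (Claim~\ref{claim: rational independence}), then spends a second step (Claim~\ref{claim: modification of sequence}, via Pliss times) forcing uniform lower bounds on $|I_{p_{0,m}}^c|$ and convergence $p_{0,m}\to r_0$. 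Without this uniform control the threshold $L(\varepsilon)$ in Proposition~\ref{proposition: center intersection} (which depends on $\beta$) cannot be compared to the moving intervals, and the scheme breaks.

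\textbf{Step~3.} Your description of $g$ is off in two ways. First, with $\ln\beta:=S_\varphi F_0(\hat q_0)>0$ you have $\beta>1$, which violates the hypothesis $g'(0)\in(0,1)$ of Proposition~\ref{proposition: center intersection}; the correct map is $g=(\gamma^s_{q_0})^{-1}\circ F^{-1}\circ\gamma^s_{q_0}$, the \emph{backward} central dynamics at $q_0\in M_0$, whose multiplier is $e^{-S_\varphi F_0(q_0)}\in(0,1)$. Second, the bridge between the $p$-side and $q$-side is not the heteroclinic connection $W^s(q_1)\cap W^u(q_0)$ you invoke but a purely $M_0$-based homoclinic point $r_{0,m}\in\mathcal{F}^u(p_{0,m})\pitchfork\mathcal{F}^s(q_0)$; the heteroclinic connections from Definition~\ref{definition: BI} are used only earlier, in Claim~\ref{claim: density of s and u}, to prove that $W^s(p_0)$ and $W^u(q_0)$ are dense in $M$ (which is what lets $U^*$ and $V^*$ reach $\mathcal{F}^{cs}_{loc}(p_0)$ and $\mathcal{F}^{cu}_{loc}(q_0)$ in the first place). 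Once density is established, the passage from the original $p_0$ to the refined $p_{0,m}$ is done by unstable holonomy inside $M_0$, not by finding new heteroclinic partners on $M_1$.
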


First, we characterize the stable set of a periodic point with a uniformly contracting center.

\begin{lemma}\label{lemma: W^s and W^u}
    Let $p\in\mathrm{Per}(F_0)$ be a periodic point with period $\pi(p)$ and $\lambda^c(p) < 0$. Assume that $\gamma_p: [0,\ 1] \to \mathcal{F}^c(p)$ is a $C^{1+}$ parameterization of the central leaf of $p$ with $\gamma_p(0) = p$. Then there exists $0 < s_p \leq 1$ satisfying
    $$I_p^c := W^s(p)\cap\mathcal{F}^c(p) = \gamma_p([0,\ s_p)) \text{ and } W^s(p) = \bigcup_{x\in I_p^c}\mathcal{F}^s(x).$$
    In particular, $\gamma_p^s := \gamma_p\circ s_p : [0,\ 1] \to \bar{I}_p^c$ is a $C^{1+}$ parameterization, and 
    $$(\gamma_p^s)^{-1}\circ F^{\pi(p)} \circ \gamma_p^s \in\mathrm{Diff}_\partial^{1+}([0,\ 1])$$
    is a diffeomorphism with exactly two fixed points $0$, $1$, where $0$ is a sink.
\end{lemma}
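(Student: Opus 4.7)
\medskip
\noindent\textbf{Proof proposal.}
The plan is to reduce the statement to a one-dimensional dynamical analysis on the central leaf, then lift the picture back to $M$ via the center-stable foliation. Without loss of generality I would replace $F$ with $F^{\pi(p)}$ so that $p$ becomes a fixed point; all of $\mathcal{F}^{c},\mathcal{F}^{s},\mathcal{F}^{cs}$ are $F$-invariant, so this causes no loss. Since $\mathcal{F}^{c}(p)$ is a central interval with its two endpoints on $M_0$ and $M_1$ respectively, and $M_i$ are $F$-invariant, the other endpoint $p^{*}:=\gamma_p(1)\in M_1$ is automatically fixed as well. Consequently the conjugated map
\[
f:=\gamma_p^{-1}\circ F^{\pi(p)}\circ\gamma_p\in\mathrm{Diff}^{1+}([0,1])
\]
is a $C^{1+}$ diffeomorphism of $[0,1]$ with $f(0)=0$, $f(1)=1$, and $f'(0)=e^{\pi(p)\lambda^{c}(p)}<1$, so $0$ is a hyperbolic sink.

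Next I would set $s_p:=\inf\{t\in(0,1]: f(t)=t\}$. By continuity and $f(1)=1$ this infimum exists and $s_p\in(0,1]$, with $f(s_p)=s_p$ when $s_p<1$. On $(0,s_p)$ the function $f(t)-t$ has no zero, so by continuity it has constant sign; the expansion $f(t)=f'(0)t+o(t)$ with $f'(0)<1$ forces $f(t)<t$ throughout $(0,s_p)$. Hence for any $t\in(0,s_p)$ the iterates $f^{n}(t)$ are strictly decreasing and trapped in $[0,t]$, so they converge to a fixed point in $[0,s_p)$, which can only be $0$. This gives $\gamma_p([0,s_p))\subseteq W^{s}(p)\cap\mathcal{F}^{c}(p)$. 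Conversely, $\gamma_p(s_p)$ (when $s_p<1$) and every $t\in(s_p,1]$ cannot lie in $W^{s}(p)$, because $f$ preserves the fixed point $s_p$ and orbits in $(s_p,1]$ cannot cross it. This establishes $I_p^{c}=\gamma_p([0,s_p))$.

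For the identity $W^{s}(p)=\bigcup_{x\in I_p^{c}}\mathcal{F}^{s}(x)$, the inclusion $\supseteq$ is immediate from uniform contraction on stable leaves: if $y\in\mathcal{F}^{s}(x)$ with $x\in I_p^{c}$, then $d(F^{n}(y),F^{n}(x))\to 0$ and $F^{n}(x)\to p$, so $F^{n}(y)\to p$. For $\subseteq$, let $y\in W^{s}(p)$. Then $F^{n}(y)\in\mathcal{F}^{cs}(p)$ for large $n$, and by invariance $y\in\mathcal{F}^{cs}(p)$. Inside the center-stable leaf, I project $y$ onto $\mathcal{F}^{c}(p)$ along $\mathcal{F}^{s}$: let $x:=\mathcal{F}^{s}(y)\pitchfork\mathcal{F}^{c}(p)$, which exists and is unique locally by transversality and dynamical coherence (Definition \ref{definition: PHI}). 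Then $F^{n}(x)=\mathcal{F}^{s}(F^{n}(y))\pitchfork\mathcal{F}^{c}(p)$, and since $F^{n}(y)\to p$ and stable holonomy within $\mathcal{F}^{cs}(p)$ is continuous, $F^{n}(x)\to p$, hence $x\in W^{s}(p)\cap\mathcal{F}^{c}(p)=I_p^{c}$, as required.

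Finally, I would verify the reparameterization. Defining $\gamma_p^{s}(t):=\gamma_p(s_p\,t)$ yields a $C^{1+}$ parameterization of $\overline{I_p^{c}}$ since $\gamma_p$ is $C^{1+}$. Because $\overline{I_p^{c}}$ is $F^{\pi(p)}$-invariant (its endpoints are both fixed by $F^{\pi(p)}$, and the interior is forward-invariant by the monotonicity argument above together with $f^{-1}$ doing the symmetric thing on $(0,s_p)$), the conjugate $(\gamma_p^{s})^{-1}\circ F^{\pi(p)}\circ\gamma_p^{s}$ is a well-defined $C^{1+}$ diffeomorphism of $[0,1]$ preserving the boundary, fixing $0$ and $1$, with derivative $e^{\pi(p)\lambda^{c}(p)}<1$ at $0$, and admitting no other fixed point by the definition of $s_p$. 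The only delicate step in the whole argument is the projection step in paragraph three, where one must stay within a neighborhood of $p$ inside $\mathcal{F}^{cs}(p)$ to ensure the transverse intersection $\mathcal{F}^{s}(y)\pitchfork\mathcal{F}^{c}(p)$ is a single point; this is handled by first iterating $y$ forward until $F^{n}(y)$ lies in a local product neighborhood of $p$ and then pulling the resulting central projection back.
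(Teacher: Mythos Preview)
Your proof is correct and follows essentially the same approach as the paper: reduce to the one-dimensional map $f=\gamma_p^{-1}\circ F^{\pi(p)}\circ\gamma_p$ on $[0,1]$, take $s_p$ to be its smallest positive fixed point, and then show $W^{s}(p)=\bigcup_{x\in I_p^{c}}\mathcal{F}^{s}(x)$ by projecting along stable leaves inside $\mathcal{F}^{cs}(p)$. The only cosmetic difference is that the paper uses the global central projection $\pi_0:M\to M_0$ to place $y\in W^{s}(p)$ into $\mathcal{F}^{cs}(p)$ and to produce the point $z\in\mathcal{F}^{c}(p)$ with $y\in\mathcal{F}^{s}(z)$, whereas you achieve the same by iterating forward into a local product chart and pulling back.
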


\begin{proof}[Proof of Lemma \ref{lemma: W^s and W^u}]
    Notice that $f_p := (\gamma_p)^{-1}\circ F^{\pi(p)}\circ \gamma_p \in\mathrm{Diff}_\partial^{1+}([0,\ 1])$. Since $\lambda^c(p) < 0$, we have $0 < f'_p(0) < 1$, and therefore $f_p$ has a smallest positive fixed point $0 < s_p \leq 1$. It follows that $s_p^{-1}\circ f_p\circ s_p\in \mathrm{Diff}_\partial^{1+}([0,\ 1])$ is a diffeomorphism with exactly two fixed points $0$, $1$, where $0$ is a sink.

    By the above argument, $I_p^c := W^s(p)\cap\mathcal{F}^c(p) = \gamma_p([0,\ s_p))$. Therefore we have
    $$W^s(p) \supseteq \bigcup_{x\in I_p^c}\mathcal{F}^s(x).$$
    Now take $y\in W^s(x)$, and then we have $\pi_0(y)\in\mathcal{F}^s(\pi_0(x))$. Recall that $\pi_0: M \to M_0$ is the natural projection along central leaves. It follows that $y\in\mathcal{F}^{cs}(x)$. Assume that $y\in\mathcal{F}^s(z)$ for some $z\in\mathcal{F}^c(x)$. Then $z\in W^s(p)$ and hence $z\in I_p^c$, since $W^s(p)$ is $s$-saturated. This completes the proof of Lemma \ref{lemma: W^s and W^u}.
\end{proof}

Note that similar conclusions hold for periodic points of both $F_0$ and $F_1$: The stable set (resp. unstable set) is well characterized by the uniform contracting center (resp. uniform expanding center) when the central Lyapunov exponent is negative (resp. positive). For simplicity, we omit the statements for the other cases.

Now we prove Theorem \ref{theorem: BI to TT}.

\begin{proof}[Proof of Theorem \ref{theorem: BI to TT}]
    It suffices to show that for any two non-empty open subsets $U,\ V\subseteq M$, there exists $N\in\mathbb{N}$ such that $F^N(U)\cap V\neq\varnothing$. Equivalently, we only need to show that $U^*\cap V^*\neq\varnothing$, where
    $$U^* := \bigcup_{n\in\mathbb{N}}F^n(U) \text{ and } V^* := \bigcup_{n\in\mathbb{N}}F^{-n}(V).$$
    Note that $F(U^*)\subseteq U^*$, hence $U^*$ is $u$-saturated; $F^{-1}(V^*)\subseteq V^*$ and $V^*$ is $s$-saturated.
    
    Since $F$ is boundary interconnected, there exist two pairs of periodic points $p_i,\ q_i\in M_i\ (i = 0,\ 1)$ such that
    \begin{align*}
        \lambda^c(p_0) < 0 < \lambda^c(p_1) &\text{ and } W^s(p_0)\pitchfork W^u(p_1)\neq\varnothing;\\
        \lambda^c(q_1) < 0 < \lambda^c(q_0) &\text{ and } W^s(q_1)\pitchfork W^u(q_0)\neq\varnothing.
    \end{align*}
    Without loss of generality, we assume that the periodic points $p_i$ and $q_i$ are fixed points. Define $\varphi: M \to \mathbb{R}$ by
    $$\varphi(x) := \ln\|DF|_{E^c(x)}\|.$$
    Then $\varphi$ is H\"older continuous, and there exist constants $C_\varphi \geq 1$ and $0 < \theta < 1$ such that
    $$|\varphi(x) - \varphi(y)| \leq C_\varphi d(x, y)^\theta,\ \forall x,\ y\in M.$$
    Recall that we denote by
    $$S_\varphi F_0(p) := \sum_{i = 0}^{\pi(p) - 1}\varphi(F_0^i(p)) = \pi(p)\lambda^c(p)$$
    the Birkhoff sum of the function $\varphi$, for $p\in\mathrm{Per}(F_0)$ with period $\pi(p)$. As a result,
    $$S_\varphi F_0(p_0) = \varphi(p_0) = \lambda^c(p_0) < 0 < \lambda^c(q_0) = \varphi(q_0) = S_\varphi F_0(q_0).$$
    
    \noindent \textbf{Generate new periodic points for asymptotically rational independence.} Now compared with the results in \cite{Xia23}, we do not know whether $S_\varphi F_0(p_0)$ and $S_\varphi F_0(q_0)$ are rationally independent. Therefore, we need the asymptotic version.
    
    \begin{claim}\label{claim: rational independence}
        There exists a sequence of periodic points $\{p_{0,m}\}\subseteq\mathrm{Per}(F_0)$ such that
        $$(S_\varphi F_0(p_{0,m}),\ S_\varphi F_0(q_0)) \to 0 \text{ and } \lambda^c(p_{0,m}) \to \lambda^c(p_0) \text{ as } m \to +\infty.$$
    \end{claim}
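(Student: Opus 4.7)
The plan is to fabricate $p_{0,m}$ by marrying an Anosov shadowing construction (pinning the central exponent near $\lambda^c(p_0)$) with a pigeonhole argument on Birkhoff sums modulo $S_\varphi F_0(q_0)\mathbb{Z}$, closed off by Lemma \ref{lemma: small distance}. Fix any sequence $\varepsilon_m \searrow 0$; for each $m$ I will produce a periodic point $p_{0,m}$ with $|\lambda^c(p_{0,m}) - \lambda^c(p_0)| < \varepsilon_m$ and $(S_\varphi F_0(p_{0,m}),\ S_\varphi F_0(q_0)) < \varepsilon_m$.

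First, for each large $m$ I would use the Anosov closing/specification property of $F_0$ to build a family $\mathcal{P}_m \subseteq \mathrm{Per}(F_0)$ of periodic points of period $n_m$ whose orbits $\delta_m$-shadow the fixed point $p_0$ except on a bounded ``splicing window''. H\"older continuity of $\varphi$ then forces
\[
|\lambda^c(p) - \lambda^c(p_0)| \leq C\bigl(\delta_m^\theta + 1/n_m\bigr) < \varepsilon_m, \qquad \forall\, p \in \mathcal{P}_m,
\]
as soon as $\delta_m$ is small and $n_m$ is large. By varying the splicing window (using specification to insert different short excursion orbits), $|\mathcal{P}_m|$ can be made arbitrarily large.

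Second, I would ensure that the Birkhoff sums $\{S_\varphi F_0(p) \bmod S_\varphi F_0(q_0)\mathbb{Z} : p \in \mathcal{P}_m\}$ take at least $|S_\varphi F_0(q_0)|/\varepsilon_m$ distinct values on the circle of length $|S_\varphi F_0(q_0)|$. Granting this, pigeonhole gives two distinct points $p^{(1)}, p^{(2)} \in \mathcal{P}_m$ with
\[
0 < |S_\varphi F_0(p^{(1)}) - S_\varphi F_0(p^{(2)})| < \varepsilon_m \pmod{S_\varphi F_0(q_0)\mathbb{Z}},
\]
and Lemma \ref{lemma: small distance} singles out $p_{0,m} \in \{p^{(1)}, p^{(2)}\}$ with $(S_\varphi F_0(p_{0,m}),\ S_\varphi F_0(q_0)) < \varepsilon_m$, which meets both requirements.

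The main obstacle is populating enough distinct residues in the previous step: a priori the shadows of $p_0$ might cluster on a single coset of $S_\varphi F_0(q_0)\mathbb{Z}$. To sidestep this I would invoke Lemma \ref{lemma: dense distribution}: since $S_\varphi F_0(p_0) < 0 < S_\varphi F_0(q_0)$, the set $\{S_\varphi F_0(\bar{p}) : \bar{p} \in \mathrm{Per}(F_0)\}$ is dense in $\mathbb{R}$, hence dense modulo $S_\varphi F_0(q_0)\mathbb{Z}$. For each target residue $t$ one selects such a $\bar{p}$; splicing a single traversal of the $\bar{p}$-orbit into the long $p_0$-shadow via specification perturbs the total Birkhoff sum by approximately $S_\varphi F_0(\bar{p}) \approx t$ while leaving the time-average within $O(1/n_m)$ of $\lambda^c(p_0)$. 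Choosing a grid of targets $t$ covering the circle $\mathbb{R}/S_\varphi F_0(q_0)\mathbb{Z}$ at scale $\varepsilon_m$ then supplies the required variability in $\mathcal{P}_m$, and the claim follows.
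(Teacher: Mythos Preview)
Your plan uses the same three ingredients as the paper---Lemma \ref{lemma: dense distribution}, Anosov closing/specification with H\"older control on $\varphi$, and Lemma \ref{lemma: small distance}---and is on the right track, but the organization differs and one step is not quite closed.

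The paper proceeds more economically. It first disposes of the case where $S_\varphi F_0(p_0)$ and $S_\varphi F_0(q_0)$ are rationally independent by taking $p_{0,m}\equiv p_0$. In the dependent case it invokes Lemma \ref{lemma: dense distribution} to pick a \emph{single} auxiliary periodic point $\bar p_{0,m}$ with $0<|S_\varphi F_0(\bar p_{0,m})-S_\varphi F_0(p_0)|<2^{-m+1}$, and then shadows exactly \emph{two} periodic pseudo-orbits that share the same heteroclinic transition data but differ by one extra traversal of the $\bar p_{0,m}$-orbit. Because the transition correction $P$ is identical for both, it cancels in the difference, giving $|S_\varphi F_0(p'')-S_\varphi F_0(p')-S_\varphi F_0(\bar p_{0,m})|\ll 2^{-m}$. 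Lemma \ref{lemma: small distance} is then applied with $b=S_\varphi F_0(p_0)$ (not $S_\varphi F_0(q_0)$), and the assumed rational relation transfers this to $(S_\varphi F_0(p_{0,m}),\,S_\varphi F_0(q_0))\to 0$. The Lyapunov-exponent convergence is handled by making the $p_0$-segment dominate the period.

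Your pigeonhole on a large family $\mathcal P_m$ is a legitimate alternative, but the sentence ``perturbs the total Birkhoff sum by approximately $S_\varphi F_0(\bar p)$'' hides a transition correction $P(\bar p)$ coming from the heteroclinic joins, and this correction depends on the choice of $\bar p$ and of the connecting orbit segments. Since you use a different $\bar p$ for each grid target $t$, the spliced sums are $(\text{baseline})+S_\varphi F_0(\bar p)+P(\bar p)$ with $P(\bar p)$ bounded but uncontrolled modulo $S_\varphi F_0(q_0)\mathbb Z$; you have not actually shown the residues land near the grid, and pigeonhole could even return two orbits whose residues coincide exactly, which Lemma \ref{lemma: small distance} cannot use. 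The clean repair is precisely the paper's device: fix one $\bar p$ and compare two spliced orbits differing by a single traverse, so that $P$ cancels in the difference and the gap is controlled to any prescribed accuracy.
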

    
    \begin{proof}[Proof of Claim \ref{claim: rational independence}]
        If $S_\varphi F_0(p_0)$ and $S_\varphi F_0(q_0)$ are indeed rationally independent, then we just take $p_{0,m} = p_0,\ \forall m\in\mathbb{N}$. Now we assume that $S_\varphi F_0(p_0)$ and $S_\varphi F_0(q_0)$ are rationally dependent. By Lemma \ref{lemma: dense distribution}, the set $\{S_\varphi F_0(p): p\in\mathrm{Per}(F_0)\}$ is dense in $\mathbb{R}$. Therefore, there exists a sequence of periodic points $\{\bar{p}_{0,m}\} \subseteq \mathrm{Per}(F_0)$ such that
        \begin{align}
            2^{-m} < |S_\varphi F_0(\bar{p}_{0,m}) - S_\varphi F_0(p_0)| < 2^{-m+1}.\label{formula: dense distribution}
        \end{align}
        For fixed $m\in\mathbb{N}$, we construct $p_{0,m}$ from $\bar{p}_{0,m}$ by shadowing.
        
        As an Anosov diffeomorphism on a nilmanifold $M_0$, $F_0$ is topologically transitive. Hence $\bar{p}_{0,m}$ and $p_0$ are homoclinic related, and there are points $x_0,\ y_0\in M_0$ with
        $$x_0 \in \mathcal{F}^u(p_0) \pitchfork\mathcal{F}^s(\bar{p}_{0,m}) \text{ and } y_0 \in \mathcal{F}^u(\bar{p}_{0,m}) \pitchfork \mathcal{F}^s(p_0),$$
        see Figure \ref{figure: periodic pseudo orbit}. For simplicity we omit the picture of the other points in $\mathrm{Orb}(\bar{p}_{0,m})$.

        \begin{figure}[htbp]
			\centering
            \begin{tikzpicture}[mid arrow/.style = {decoration = {markings, mark = at position 0.5 with {\arrow{>}}}, postaction = {decorate}}]
                \draw [thick, mid arrow](-1, 5)--(6, 5);
                \draw [thick, mid arrow](5, 6)--(5, -1);
                \draw [thick, mid arrow](6, 0)--(-1, 0);
                \draw [thick, mid arrow](0, -1)--(0, 6);
                \node [circle, fill, inner sep = 1pt, label = south east: $p_0$] at (5,0){};
                \node [circle, fill, inner sep = 1pt, label = south west: $x_0$] at (0,0){};
                \node [circle, fill, inner sep = 1pt, label = north west: $\bar{p}_{0,m}$] at (0,5){};
                \node [circle, fill, inner sep = 1pt, label = north east: $y_0$] at (5,5){};
                \node [label = above: $\mathcal{F}^u(\bar{p}_{0,m})$] at (2.5, 5) {};
                \node [label = right: $\mathcal{F}^s(p_0)$] at (5, 2.5) {};
                \node [label = below: $\mathcal{F}^u(p_0)$] at (2.5, 0) {};
                \node [label = left: $\mathcal{F}^s(\bar{p}_{0,m})$] at (0, 2.5) {};
                \node [circle, fill = red, inner sep = 1pt, label = below: \textcolor{red}{$F_0^{-K_m}(y_0)$}] at (1.25, 5) {};
                \node [label = below: $\textcolor{red}{\cdots}$] at (3,4.9) {};
                \node [circle, fill = red, inner sep = 1pt] at (3.75, 5) {};
                \node [circle, fill = red, inner sep = 1pt] at (5, 3.75) {};
                \node [label = left: $\textcolor{red}{\vdots}$] at (4.6,2.5) {};
                \node [circle, fill = red, inner sep = 1pt] at (5, 1.25) {};
                \node [circle, fill = red, inner sep = 1pt, label = above: \textcolor{red}{$F_0^{-L_m}(x_0)$}] at (3.75, 0) {};
                \node [label = above: $\textcolor{red}{\cdots}$] at (2,0.1) {};
                \node [circle, fill = red, inner sep = 1pt] at (1.25, 0) {};
                \node [circle, fill = red, inner sep = 1pt] at (0, 1.25) {};
                \node [label = right: $\textcolor{red}{\vdots}$] at (0.4,2.5) {};
                \node [circle, fill = red, inner sep = 1pt] at (0, 3.75) {};
            \end{tikzpicture}
			\caption{Periodic pseudo-orbit}
			\label{figure: periodic pseudo orbit}
		\end{figure}
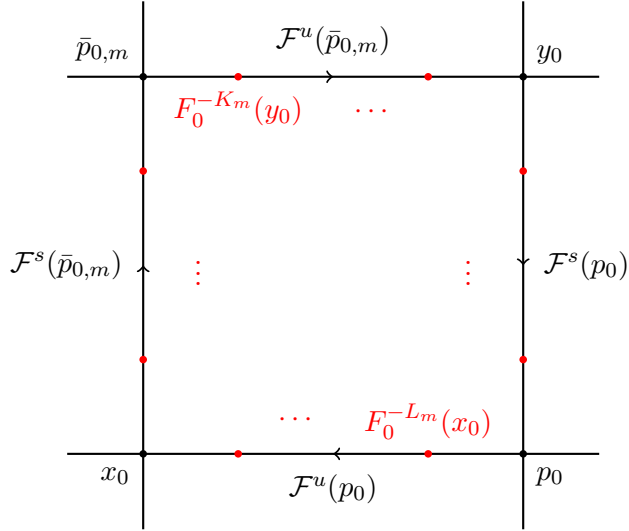
        We then consider the following pseudo orbit segment:
        $$\mathcal{P}'_m := \left(F_0^{-K_m}(y_0), \cdots, y_0, \cdots, F_0^{L_m - 1}(y_0),\ F_0^{-L_m}(x_0), \cdots, x_0, \cdots, F_0^{K_m - 1}(x_0)\right).$$
        Here $K_m,\ L_m\in\mathbb{N}$ are large multiples of $\pi(\bar{p}_{0,m})$ to be decided. The sum of $\varphi$ over $\mathcal{P}'_m$ is
        $$S_\varphi(\mathcal{P}'_m):= \sum_{x\in\mathcal{P}'_m}\varphi(x) =\sum_{i = -K_m}^{L_m - 1}\varphi(F_0^i(y_0)) + \sum_{i = -L_m}^{K_m - 1}\varphi(F_0^i(x_0)).$$
        Since $F_0$ is uniformly contracting (resp. expanding) in stable (resp. unstable) manifolds and $\varphi$ is H\"older continuous, we have the following convergent series:
        \begin{align*}
			P&=
            \sum\limits_{i = -\infty}^{-1} \left[\varphi(F_0^i(y_0)) - \varphi(F_0^i(\bar{p}_{0,m}))\right] + \sum_{i = 0}^{+\infty} \left[\varphi(F_0^i(y_0)) - \varphi(p_0)\right]\\
			&+\sum_{i = -\infty}^{-1} \left[\varphi(F_0^i(x_0)) - \varphi(p_0)\right] + \sum_{i = 0}^{+\infty} \left[\varphi(F_0^i(x_0)) - \varphi(F_0^i(\bar{p}_{0,m}))\right].
		\end{align*}
        As a result, given a sequence of positive numbers $\bar{\varepsilon}_m \to 0$, there exist $K_m^0,\ L_m^0\in\mathbb{N}$ being multiples of $\pi(\bar{p}_{0,m})$, such that when $K_m \geq K_m^0$ and $L_m \geq L_m^0$, we have
        \begin{align}
            \left|S_\varphi(\mathcal{P}'_m) - (P + 2K_m\lambda^c(\bar{p}_{0,m}) + 2L_m\lambda^c(p_0))\right| < \bar{\varepsilon}_m. \label{formula: sum over pseudo orbit}
        \end{align}
        On the other hand, by the classical \textbf{exponential Anosov shadowing lemma}, there are constants $0 < \delta_0 < 1$, $C_0 \geq 1$ and $0 < \mu_0 < 1$ such that for every $0 < \delta < \delta_0$, every $\delta$-pseudo orbit $\{x_i: i_A\leq i \leq i_B\}\ (i_A \leq 0 \leq i_B)$ is exponentially shadowed by a periodic point $p$, in the sense that
        $$d(F_0^i(p),\ x_i) < C_0\mu_0^{|i|}\delta,\ \forall i_A \leq i \leq i_B.$$
        As a result, there exist $K_m^1,\ L_m^1\in\mathbb{N}$ such that when $K_m \geq K_m^1$ and $L_m \geq L_m^1$, $\mathcal{P}'_m$ is a $\delta_0\bar{\varepsilon}_m$-pseudo orbit, exponentially shadowed by a periodic point $p_{0,m}'\in\mathrm{Per}(F_0)$. In fact, it suffices to take $K_m^1$ and $L_m^1$ to be multiples of $\pi(\bar{p}_{0,m})$ and so large that
        \begin{align*}
            &\min\left\{d(F_0^{L_m^1}(y_0),\ p_0),\ d(F_0^{-L_m^1}(x_0),\ p_0)\right\} < \frac{1}{2}\delta_0\bar{\varepsilon}_m;\\
            &\min\left\{d(F_0^{K_m^1}(x_0),\ \bar{p}_{0,m}),\ d(F_0^{-K_m^1}(y_0),\ \bar{p}_{0,m})\right\} < \frac{1}{2}\delta_0\bar{\varepsilon}_m.
        \end{align*}
        
        Now we consider $S_\varphi F_0(p_{0,m}')$, which is the sum of $\varphi$ over the orbit of $p_{0,m}'$. Since $\varphi$ is H\"older continuous, the difference between $S_\varphi F_0(p_{0,m}')$ and $S_\varphi(\mathcal{P}'_m)$ is controlled by
        \begin{align}
            \left|S_\varphi F_0(p_{0,m}') - S_\varphi(\mathcal{P}'_m)\right| < C_\varphi C_0^\theta\frac{2\delta_0^\theta}{1 - \mu_0^\theta}\cdot \bar{\varepsilon}_m^\theta. \label{formula: sum over periodic orbit}
        \end{align}
        
        Combining (\ref{formula: sum over pseudo orbit}) and (\ref{formula: sum over periodic orbit}), there exists a constant $C \geq 1$ such that
        \begin{align}
            \left|S_\varphi F_0(p_{0,m}') - (P + 2K_m\lambda^c(\bar{p}_{0,m}) + 2L_m\lambda^c(p_0))\right| < C\bar{\varepsilon}_m^\theta. \label{formula: estimate of '}
        \end{align}
        Here we finally choose $K_m = \max\{K_m^0,\ K_m^1,\ m\pi(\bar{p}_{0,m})\}$ and $L_m = \max\{L_m^0,\ L_m^1,\ mK_m\}$, so that the central Lyapunov exponent is also controlled:
        \begin{align*}
            \left|\lambda^c(p_{0,m}') - \lambda^c(p_0)\right|
            &= \left|\frac{S_\varphi F_0(p_{0,m}')}{\pi(p_{0,m}')} - \lambda^c(p_0)\right|\\
            &\leq \frac{|P| + 2K_m|\lambda^c(\bar{p}_{0,m}) - \lambda^c(p_0)| + C\bar{\varepsilon}_m^\theta}{2K_m + 2L_m}\\
            &\to 0\ ({\rm as}\ m \to +\infty).
        \end{align*}

        In the same manner as above, we can make a slight modification on the choice of the pseudo orbit segment. Precisely, we add the periodic orbit of $\bar{p}_{0,m}$ and consider the following pseudo orbit segment:
        $$\mathcal{P}''_m := \left(\bar{p}_{0,m},\ F_0(\bar{p}_{0,m}),\ \cdots,\ F_0^{\pi(\bar{p}_{0,m}) - 1}(\bar{p}_{0,m})\right)\cup \mathcal{P}_m'.$$
        Then we have a periodic point $p_{0,m}''\in\mathrm{Per}(F_0)$ shadowing $\mathcal{P}''_m$ and satisfying a similar estimate as (\ref{formula: estimate of '}):
        \begin{align}
            \left|S_\varphi F_0(p_{0,m}'') - (P + 2K_m\lambda^c(\bar{p}_{0,m}) + 2L_m\lambda^c(p_0) + S_\varphi F_0(\bar{p}_{0,m}))\right| < C\bar{\varepsilon}_m^\theta. \label{formula: estimate of ''}
        \end{align}
        Moreover, we also have that
        $$\left|\lambda^c(p_{0,m}'') - \lambda^c(p_0)\right| \to 0 \text{ as } m \to +\infty.$$

        Finally, we choose $p_{0,m} \in \{p_{0,m}',\ p_{0,m}''\}$ so that $(S_\varphi F_0(p_{0,m}),\ S_\varphi F_0(p_0)) \to 0$ as $m \to +\infty$.
        In fact, combining (\ref{formula: estimate of '}) and (\ref{formula: estimate of ''}), we have
        \begin{align}
            \left|S_\varphi F_0(p_{0,m}'') - S_\varphi F_0(p_{0,m}') - S_\varphi F_0(\bar{p}_{0,m})\right| < 2C\bar{\varepsilon}_m^\theta.\label{formula: difference}
        \end{align}
        Now take $\bar{\varepsilon}_m$ so small that $2C\bar{\varepsilon}_m^\theta < 2^{-m-1}$, and then we have the following estimate by combining (\ref{formula: dense distribution}) and (\ref{formula: difference}):
        \begin{align}
            2^{-m - 1} < \left|S_\varphi F_0(p_{0,m}'') - S_\varphi F_0(p_{0,m}') - S_\varphi F_0(p_0)\right| < 2^{ - m+2}.\label{formula: small distance}
        \end{align}
        By Lemma \ref{lemma: small distance}, there exists $p_{0,m} \in \{p_{0,m}',\ p_{0,m}''\}$ such that
        $$(S_\varphi F_0(p_{0,m}),\ S_\varphi F_0(p_0)) < 2^{ - m+2}.$$
        Since $S_\varphi F_0(p_0)$ and $S_\varphi F_0(q_0)$ are rationally dependent, there exists a sequence of positive numbers $\varepsilon_m \to 0 + 0$ such that
        $(S_\varphi F_0(p_{0,m}),\ S_\varphi F_0(q_0)) < \varepsilon_m$.
        
        This completes the proof of Claim \ref{claim: rational independence}.
    \end{proof}

    In fact, we can modify the choice of $\{p_{0,m}\}$ to obtain more information about the sequence.

    \begin{claim}\label{claim: modification of sequence}
        By replacing the periodic point $p_{0,m}$ by some points in $\mathrm{Orb}(p_{0,m})$ and taking a subsequence, we may assume further that the sequence $\{p_{0,m}\}$ constructed in Claim \ref{claim: rational independence} satisfies the following properties:
        \begin{itemize}
            \item Better Birkhoff average:
            $$\frac{1}{k}\sum_{i = 0}^{k - 1}\ln\left\|DF|_{E^c(F_0^i(p_{0,m}))}\right\| \leq \frac{\lambda^c(p_0)}{3},\ \forall k\in\mathbb{N},\ \forall m\in\mathbb{N};$$
            \item Uniform size of contracting center: there exists $\eta_0 > 0$ such that
            $$|I_{p_{0,m}}^c| \geq \eta_0,\ \forall m\in\mathbb{N};$$
            \item Convergence: there exists $r_0\in M_0$ such that $p_{0,m}\to r_0$ as $m \to +\infty$.
        \end{itemize}
    \end{claim}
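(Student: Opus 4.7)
My plan is to establish the three properties sequentially, using (a) a cyclic rearrangement within each orbit $\mathrm{Orb}(p_{0,m})$ to control every partial Birkhoff average, (b) a one-dimensional Pesin-type distortion estimate to upgrade (a) into a uniform lower bound on $|I_{p_{0,m}}^c|$, and (c) compactness of $M_0$ to extract a convergent subsequence.

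For (i), I would apply the standard cyclic rearrangement (``cyclic max'') lemma to the finite sequence $a_i := \ln\|DF|_{E^c(F_0^i(p_{0,m}))}\|$, $0 \leq i < \pi_m := \pi(p_{0,m})$, whose average equals $\lambda^c(p_{0,m})$. Taking $j_m$ to realize $\max_{0 \leq k < \pi_m}\sum_{i=0}^{k-1}(a_i - \lambda^c(p_{0,m}))$ and replacing $p_{0,m}$ by $F_0^{j_m}(p_{0,m})$, every partial sum with $1 \leq k \leq \pi_m$ satisfies $\sum_{i=0}^{k-1} a_i \leq k\lambda^c(p_{0,m})$; Euclidean division extends this bound to every $k \in \mathbb{N}$. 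Since Claim \ref{claim: rational independence} yields $\lambda^c(p_{0,m}) \to \lambda^c(p_0) < 0$, passing to a tail of $m$ gives $\lambda^c(p_{0,m}) \leq \lambda^c(p_0)/2 < \lambda^c(p_0)/3$, which is (i). The rearrangement preserves both $\lambda^c(p_{0,m})$ and the full-period sum $S_\varphi F_0(p_{0,m})$, so the asymptotic rational independence obtained in Claim \ref{claim: rational independence} is untouched.

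For (ii), I would exploit the uniform $C^{1+\theta}$ distortion available because $F \in \mathrm{PHI}^2(M)$ and $M$ is compact: in arc-length coordinates the induced one-dimensional map $\phi_x$ on central leaves has $\phi_x'$ uniformly $\theta$-H\"older with constant $C_1$, while $\phi_x'(0) = \|DF|_{E^c(x)}\|$ is bounded below by some $c_0 > 0$. Set $\lambda_0 := \lambda^c(p_0)/3$ and, for $y_0 \in \mathcal{F}^c(p_{0,m})$ at arc-distance $t_0$ from $p_{0,m}$, let $t_n$ denote the arc-distance from $F_0^n(p_{0,m})$ to $F^n(y_0)$. Integrating the H\"older estimate for $\phi_n' := \phi_{F_0^n(p_{0,m})}'$ yields $\ln(t_{n+1}/t_n) \leq \ln \phi_n'(0) + (C_1/c_0)\, t_n^\theta$. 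Summing and invoking (i) gives $\ln t_n \leq \ln t_0 + n\lambda_0 + (C_1/c_0)\sum_{i=0}^{n-1} t_i^\theta$. Choose $\eta_0 > 0$ so that $(C_1/c_0)\eta_0^\theta \leq |\lambda_0|/2$; a straightforward induction shows that if $t_0 \leq \eta_0$ then $t_n \leq t_0 e^{n\lambda_0/2}$ for every $n$, so $y_0 \in W^s(p_{0,m})$. Hence $|I_{p_{0,m}}^c| \geq \eta_0$ with $\eta_0$ independent of $m$, and (iii) is then immediate by extracting a convergent sub-subsequence from the compact manifold $M_0$.

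I expect the main obstacle to be the distortion step: the induction only closes if \emph{every} partial product $\prod_{i<k}\phi_i'(0)$ is controlled, so part (i) is genuinely needed, not merely the full-period bound $\lambda^c(p_{0,m}) < 0$ — without it the accumulated nonlinear error $(C_1/c_0)\sum t_i^\theta$ could outrun the exponential contraction before it has a chance to set in. A secondary subtlety is verifying that $C_1$, $\theta$ and $c_0$ really are independent of the orbit $\mathrm{Orb}(p_{0,m})$; this is where I would use the compactness of $M$ together with the uniform $C^{1+}$ regularity of the central foliation guaranteed by $F \in \mathrm{PHI}^2(M)$.
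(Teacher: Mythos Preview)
Your proposal is correct and follows essentially the same route as the paper. The only cosmetic differences are that the paper invokes the Pliss lemma in place of your cyclic-maximum rearrangement (equivalent for periodic orbits), and for the uniform center size it packages the same H\"older distortion of $\varphi = \ln\|DF|_{E^c}\|$ as a maximal-interval/contradiction argument rather than your direct Gronwall-type induction on $t_n$; part (iii) is identical.
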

    \begin{proof}[Proof of Claim \ref{claim: modification of sequence}]
        Since $\lambda^c(p_{0,m}) \to \lambda^c(p_0) < 0$, by taking a subsequence, we assume that $$\lambda^c(p_{0,m}) < \frac{\lambda^c(p_0)}{2},\ \forall m \in\mathbb{N}.$$
        By the classical \textbf{Pliss lemma}, for each $m\in\mathbb{N}$, there exists a hyperbolic time $n_m$ such that
        $$\frac{1}{k}\sum_{i = 0}^{k - 1}\ln\left\|DF|_{E^c(F_0^{n_m + i}(p_{0,m}))}\right\| \leq \frac{\lambda^c(p_0)}{3},\ \forall k\in\mathbb{N}.$$
        Hence we can replace $p_{0,m}$ by $F_0^{n_m}(p_{0,m})\in\mathrm{Orb}(p_{0,m})$. Since $p_{0,m}$ is a periodic point, this replacement does not influence the properties that we need in Claim \ref{claim: rational independence}.

        Now we show that the length of the contracting center $I_{p_{0,m}}^c$ has a positive lower bound with respect to $m\in\mathbb{N}$. Since $p_{0,m}$ now admits a better Birkhoff average, there exists a maximal closed interval $I_m$ containing $p_{0,m}$ as an endpoint, lying in $I_{p_{0,m}}^c$, satisfying
        $$\bar\varphi_k(x) \leq \frac{\lambda^c(p_0)}{6},\ \forall 1\leq k \leq \pi(p_{0,m}),\ \forall x\in I_m,$$
        where
        $$\bar\varphi_k(x) := \frac{1}{k}\ln\left\|DF^k|_{E^c(x)}\right\| = \frac{1}{k}\sum_{i = 0}^{k - 1}\varphi(F^i(x)).$$
        Then for $x,\ y\in I_m$, we have
        \begin{align*}
            |\bar\varphi_k(x) - \bar\varphi_k(y)|&\leq \frac{1}{k}\sum_{i = 0}^{k - 1}\left|\varphi(F^i(x)) - \varphi(F^i(y))\right|
            \leq \frac{1}{k}\sum_{i = 0}^{k - 1}C_\varphi d(F^i(x),\ F^i(y))^\theta\\
            &\leq \frac{1}{k}\sum_{i = 0}^{k - 1}C_\varphi d_c(F^i(x),\ F^i(y))^\theta
            \leq \frac{1}{k}\sum_{i = 0}^{k - 1}C_\varphi \max_{z\in I_m}\|DF^i|_{E^c(z)}\|^\theta d_c(x,\ y)^\theta\\
            &\leq \frac{1}{k}\sum_{i = 0}^{k - 1}C_\varphi e^{\frac{1}{6}\lambda^c(p_0)i\theta} d_c(x,\ y)^\theta
            \leq \frac{1}{k}\cdot\frac{C_\varphi}{1 - e^{\frac{1}{6}\lambda^c(p_0)\theta}} d_c(x, y)^\theta.
        \end{align*}
        Let $\widetilde{p}_{0,m}$ be the other endpoint of $I_m$. Since $I_m$ is maximal, there exists $1\leq k_m \leq \pi(p_{0,m})$ such that
        \begin{align*}
            \frac{\lambda^c(p_0)}{6} = \bar\varphi_{k_m}(\widetilde{p}_{0,m})
            &\leq \bar\varphi_{k_m}(p_{0,m}) + |\bar\varphi_{k_m}(\widetilde{p}_{0,m}) - \bar\varphi_{k_m}(p_{0,m})|\\
            &\leq \frac{\lambda^c(p_0)}{3} + \frac{1}{k_m}\frac{C_\varphi}{1 - e^{\frac{1}{6}\lambda^c(p_0)\theta}}|I_m|^\theta\\
            &\leq \frac{\lambda^c(p_0)}{3} + \frac{C_\varphi}{1 - e^{\frac{1}{6}\lambda^c(p_0)\theta}}|I_m|^\theta
        \end{align*}
        As a result,
        $$|I_m| \geq \left[-\frac{1}{6}\lambda^c(p_0)\cdot\frac{1}{C_\varphi}\left(1 - e^{\frac{1}{6}\lambda^c(p_0)\theta}\right)\right]^{\frac{1}{\theta}} > 0.$$
        It follows that $p_{0,m}$ has a uniform size of contracting center.
        
        Finally, since $M_0$ is compact, we can take a subsequence to ensure that $p_{0,m}\to r_0$ for some $r_0\in M_0$.
    \end{proof}
    
    \noindent \textbf{Show the density of the stable and unstable sets.} Next, we show that the intersection of the stable and unstable sets implies their density.

    \begin{claim}\label{claim: density of s and u}
        The stable and unstable sets $W^s(p_0)$, $W^u(q_0)$, $W^u(p_1)$, $W^s(q_1)$ are dense in $M$.
    \end{claim}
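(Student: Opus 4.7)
The plan is to prove the density of $W^s(p_0)$; the other three sets follow by symmetric arguments that swap $M_0 \leftrightarrow M_1$ and $(p_0, p_1) \leftrightarrow (q_1, q_0)$. I will proceed in three steps: density on $M_0$, density on $M_1$, and density in the interior.

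First, since $F_0$ is a topologically transitive Anosov diffeomorphism on the nilmanifold $M_0$, its stable foliation is minimal, so $W^s_{F_0}(p_0) \subseteq W^s(p_0)$ is dense in $M_0$, giving $M_0 \subseteq \overline{W^s(p_0)}$. For $M_1$, I pick any $z \in W^s(p_0) \cap W^u(p_1)$ supplied by boundary interconnection. Since $W^u(p_1)$ is $u$-saturated over an expanding central interval at $p_1$ (the analog of Lemma~\ref{lemma: W^s and W^u} for unstable sets), the backward iterates $F^{-n}(z) \to p_1$, and by $F$-invariance each $F^{-n}(z) \in W^s(p_0)$. Because $W^s(p_0)$ is open in $\mathcal{F}^{cs}(p_0)$ (Remark~\ref{remark: BI}), a full local center-stable plaque around each $F^{-n}(z)$ lies in $W^s(p_0)$; continuity of $\mathcal{F}^{cs}$ then forces $\mathcal{F}^{cs}_{loc}(p_1) \subseteq \overline{W^s(p_0)}$. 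Iterating by $F^{-n}$ (which expands the $F_1$-stable direction) together with the minimality of $\mathcal{F}^s_{F_1}$ on $M_1$ yields $M_1 \subseteq \overline{W^s(p_0)}$.

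Next, for density in the interior, I apply the same leaf-continuity observation at every $x \in M_1 \subseteq \overline{W^s(p_0)}$: each such $x$ is approximated by points of $W^s(p_0)$ whose local $cs$-plaques lie in $W^s(p_0)$, so $\mathcal{F}^{cs}_{loc}(x) \subseteq \overline{W^s(p_0)}$. By a dimension count, $\bigcup_{x \in M_1}\mathcal{F}^{cs}_{loc}(x)$ forms a full one-sided neighborhood $V_1$ of $M_1$ in $M$. To obtain a matching one-sided neighborhood $V_0$ of $M_0$, I will exploit the auxiliary periodic points $\{p_{0,m}\}$ constructed in Claim~\ref{claim: modification of sequence}: each $p_{0,m}$ is $F$-homoclinically related to $p_0$, since the transverse intersection $W^u_{F_0}(p_0) \pitchfork W^s_{F_0}(p_{0,m})$ inside $M_0$ is already transverse in $M$ (the extra central direction carried by $W^s(p_{0,m})$ completes the splitting $E^u \oplus E^{cs} = TM$ at intersection points). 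The $\lambda$-lemma then yields $W^s(p_{0,m}) \subseteq \overline{W^s(p_0)}$; the uniform central size $|I^c_{p_{0,m}}| \geq \eta_0$ from Claim~\ref{claim: modification of sequence}, together with the density of $W^s_{F_0}(p_{0,m})$ in $M_0$, produces $V_0 \subseteq \overline{W^s(p_0)}$. Finally, $F$-invariance of $\overline{W^s(p_0)}$ combined with the expanding (resp.\ contracting) central dynamics at $p_1$ (resp.\ $p_0$) lets the forward and backward iterates of $V_0 \cup V_1$ sweep across every central leaf, covering the interior of $M$.

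The hardest step will be the final sweep: the one-dimensional central dynamics on a general leaf $\mathcal{F}^c(y)$ may admit additional interior fixed points beyond the two boundary endpoints, so one cannot naively iterate $V_0$ and $V_1$ to cover $M$. My plan to overcome this obstacle is to combine the Pliss-type uniformity of Claim~\ref{claim: modification of sequence} with a dual construction of approximating periodic points $\{p_{1,m}\}$ on $M_1$ (using $p_1$ instead of $p_0$), producing central intervals of uniform size in $\overline{W^s(p_0)}$ through a dense subset of $M_0 \cup M_1$; the uniform-size central intervals, propagated by $F$ and $F^{-1}$, should be enough to force $\overline{W^s(p_0)} = M$ without requiring any control over the interior central dynamics.
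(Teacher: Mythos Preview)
Your step~3 has a genuine gap. You correctly identify the obstacle --- interior fixed points of the central dynamics can prevent the collars $V_0,V_1$ from sweeping out all of $M$ under iteration --- but your proposed workaround does not close it. The uniform-size intervals $I^c_{p_{0,m}}$ only populate a fixed-width collar of $M_0$; the ``dual'' family $\{p_{1,m}\}$ built from $p_1$ has $\lambda^c(p_{1,m})>0$, so their \emph{stable} sets lie in $M_1$ and give you nothing new in $\overline{W^s(p_0)}$; and even if you did obtain collars on both sides, a closed $F$-invariant set sitting in the middle band is not ruled out by anything you have written.

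The observation you are missing --- and it makes step~3 entirely unnecessary --- is that for any $y\in W^s(p_0)$, the set $W^s(p_0)\cap\mathcal{F}^c(y)$ is not merely a small plaque around $y$: by Lemma~\ref{lemma: W^s and W^u} it is the stable-holonomy image of $I^c_{p_0}$, hence a central interval with one endpoint $\pi_0(y)\in M_0$. Apply this in your step~2 to $y=F^{-n}(z)$: the $M_0$-endpoint is $F_0^{-n}(\pi_0(z))\to\widehat{p}_1:=\pi_0(p_1)$ (because $\pi_0(z)$ lies on the $F_0$-unstable leaf of $\widehat{p}_1$, via the conjugacy $\pi_0|_{M_1}$), while the interval contains $F^{-n}(z)\to p_1$. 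So these intervals converge to the \emph{entire} leaf $\mathcal{F}^c(p_1)$, not just a neighborhood of $p_1$. This is exactly the paper's proof: it iterates backward the full interval $J:=W^s(p_0)\cap\mathcal{F}^c(z)$ rather than the single point $z$, obtains $F^{-N}(J)\to\mathcal{F}^c(p_1)$, and then finishes in one stroke by invoking minimality of the center-stable foliation in $M$ (each $cs$-leaf equals $\pi_0^{-1}$ of an $F_0$-stable leaf, hence is dense). No separate treatment of $M_0$, $M_1$, and the interior is needed, and the periodic points $\{p_{0,m}\}$ play no role in this claim.
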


    \begin{proof}[Proof of Claim \ref{claim: density of s and u}]
        For convenience we define $\widehat{p}_0 := \pi_1(p_0)$ and $\widehat{p}_1 := \pi_0(p_1)$. Recall that $W^s(p_0)\pitchfork W^u(p_1)$ contains a central interval. Assume that the interval lies in a central leaf with endpoints $z_0\in M_0$ and $z_1\in M_1$, then there exists an open central interval $J \subseteq \mathcal{F}^c(z_0)$ with an endpoint $z_0$, lying in $W^s(p_0)$ and containing an interval in $W^u(p_1)$, see Figure \ref{figure: density of stable set}.
        \begin{figure}[htbp]
			\centering
            \begin{tikzpicture}[mid arrow/.style = {decoration = {markings, mark = at position 0.5 with {\arrow{>}}}, postaction = {decorate}}]
                \draw [thick, mid arrow](6, 0)--(3, 2);
                \draw [thick, mid arrow](3, 2)--(0, 0);
                \draw [thick, mid arrow](6, 5)--(3, 7);
                \draw [thick, mid arrow](3, 7)--(0, 5);
                \draw [thick, mid arrow](0, 3)--(0, 0);
                \draw [thick, mid arrow](6, 5)--(6, 2);
                \draw [thick, mid arrow](6, 2)--(3, 4);
                \draw [thick, mid arrow](3, 5)--(0, 3);
                \draw [thick](0, 5)--(0, 3);
                \draw [thick](6, 2)--(6, 0);
                \draw [thick](3, 7)--(3, 5);
                \draw [thick, color = red] (3, 5)--(3, 2);
                \draw [thick, color = red] (5.85, 0.1)--(5.85, 4.95);
                \node [label = left: $\textcolor{red}{J}$] at (3, 3.5) {};
                \node [label = left: $\textcolor{red}{F^{-N}(J)}$] at (5.85, 3.5) {};
                \node [label = left: $p_0$] at (0, 0) {};
                \node [label = left: $\gamma_{p_0}(s_{p_0})$] at (0, 3) {};
                \node [label = left: $\widehat{p}_0$] at (0, 5) {};
                \node [label = right: $p_1$] at (6, 5) {};
                \node [label = right: $\gamma_{p_1}(s_{p_1})$] at (6, 2) {};
                \node [label = right: $\widehat{p}_1$] at (6, 0) {};
                \node [label = center: $W^s(p_0)$] at (1.5, 2.5) {};
                \node [label = center: $W^u(p_1)$] at (4.5, 4.5) {};
                \node [label = below: $z_0$] at (3, 2) {};
                \node [label = above: $z_1$] at (3, 7) {};
            \end{tikzpicture}
            \caption{Density of $W^s(p_0)$}
            \label{figure: density of stable set}
        \end{figure}
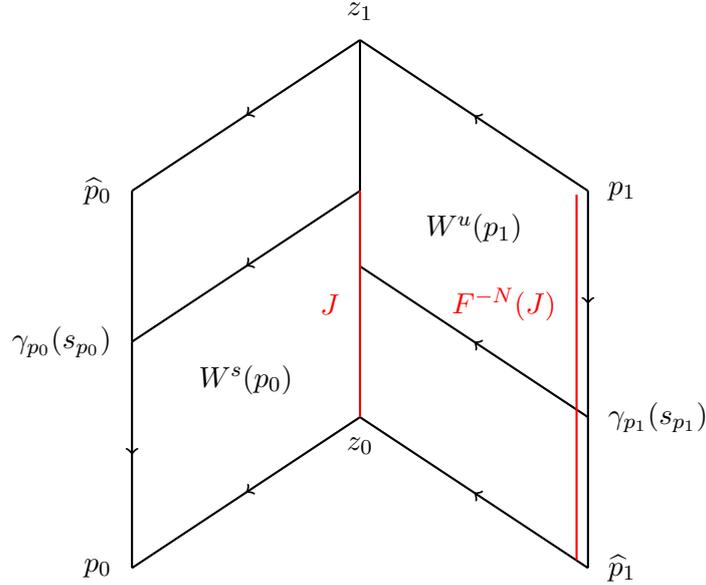

        Now consider $F^{-N}(J)$. One of its endpoints is $F_0^{-N}(z_0)$, which tends to $\widehat{p}_1$; the other endpoint tends to $p_1$, since it lies in $W^u(p_1)$. Moreover, $F^{-N}(J)$ is a central interval, hence $F^{-N}(J)$ tends to $\mathcal{F}^c(p_1)$, in the sense that
        $$\forall x\in\mathcal{F}^c(p_1),\ \forall \varepsilon > 0,\ \exists N\in\mathbb{N},\ \exists x'\in F^{-N}(J),\ \text{such that}\ d(x',\ x) < \varepsilon.$$

        Since $F_0$ is an Anosov diffeomorphism on a nilmanifold, the stable foliation restricted to $M_0$ is minimal, in the sense that each stable leaf is dense in $M_0$. It follows that $\mathcal{F}^{cs}$ is minimal, because $\mathcal{F}^{cs}(\cdot) = \pi_0^{-1}(\mathcal{F}^s(\pi_0(\cdot)))$.

        Now $\mathcal{F}^{cs}(p_1)$ is dense in $M$, for any non-empty open subset $U\subseteq M$, there exists $x\in\mathcal{F}^c(p_1)$ such that $\mathcal{F}^s(x)\cap U\neq\varnothing$. Note that
        $$\mathcal{F}^s(x) = \bigcup_{k\in\mathbb{N}} F^{-k}(\mathcal{F}^s_\varepsilon(x)),\ \forall \varepsilon > 0.$$
        Therefore, there exists $K = K(\varepsilon)\in\mathbb{N}$ such that $\mathcal{F}^s_\varepsilon(x)\cap F^K(U)\neq\varnothing$. Recall that $F^{-N}(J)$ tends to $\mathcal{F}^c(p_1)$, hence there exist $N\in\mathbb{N}$ and $x'\in F^{-N}(J)$ such that $\mathcal{F}^s_\varepsilon(x')\cap F^K(U)\neq\varnothing$. Now we have that $\mathcal{F}^s(F^N(x'))\cap F^{K + N}(U)\neq\varnothing$. Since $F^N(x')\in J\subseteq W^s(p_0)$, we conclude that
        $$W^s(p_0) \cap U = F^{-(K + N)}(W^s(p_0))\cap U \neq\varnothing.$$
        This shows the density of $W^s(p_0)$. By the same argument, $W^u(q_0),\ W^u(p_1)$ and $W^s(q_1)$ are also dense in $M$.
    \end{proof}
    
    \noindent \textbf{Move from $p_0$ to $p_{0,m}$.} We first show that the points in $U^*$ appear near the contracting central interval $I_{p_{0,m}}^c$. 
    
    Since $\lambda^s(p_0) < 0$, we have
    $$W^s(p_0) = \bigcup_{k\in\mathbb{N}}F^{-k}\left(\mathcal{F}^{cs}_{loc}(p_0)\right).$$
    Therefore, by the density of $W^s(p_0)$, we have
    $$U^*\cap \mathcal{F}^{cs}_{loc}(p_0)\neq \varnothing,$$
    and hence we can take
    $$\widetilde{p}_0\in \pi_0(U^*)\cap \mathcal{F}^s_{loc}(p_0) \subseteq M_0.$$
    Moreover, $U^*\cap \mathcal{F}^c(\widetilde{p}_0)$ contains a closed central interval $J_{\widetilde{p}_0}^c$, satisfying $\mathcal{P} := \mathcal{F}^u_{loc}(J_{\widetilde{p}_0}^c)\subseteq U^*$, see Figure \ref{figure: holonomy map p}.

    \begin{figure}[htbp]
		\centering
        \begin{tikzpicture}[mid arrow/.style = {decoration = {markings, mark = at position 0.5 with {\arrow{>}}}, postaction = {decorate}}]
            \draw [thick, mid arrow, color = red] (0,1)--(1,0);
            \draw [thick, mid arrow, color = red] (1.5,-0.5)--(1,0);
            \draw [thick, mid arrow, color = green] (1,0)--(6.5,3.3);
            \draw [thick, mid arrow, color = green] (1,0)--(0.5,-0.3);
            \node [circle, fill, inner sep = 1pt, label = below: $p_0$] at (1,0) {};
            
            \draw [thick, mid arrow, color = red] (5.5,3.5)--(7,2);
            \draw [thick, mid arrow, color = red] (7.5,1.5)--(7,2);
            \draw [thick, mid arrow, color = green] (7,2)--(6.5,1.7);
            \draw [thick, mid arrow, color = green] (7,2)--(7.5,2.3);
            \node [circle, fill, inner sep = 1pt, label = below: $p_{0,m}$] at (7,2) {};

            \node [label = below: $\widetilde{p}_0$] at (0,1) {};
            \node [label = below: $z_{0,m}$] at (6,3) {};

            \draw (0,1)--(0,3);
            \draw (7,2)--(7,4);
            \draw (1,0)--(1,2);
            
            \draw [thick] (-0.5,1.2)--(-0.5,2.2);
            \draw [thick] (0,1.5)--(0,2.5);
            \draw [thick] (0.5,1.8)--(0.5,2.8);
            \draw [thick] (-0.5,1.2)--(0.5,1.8);
            \draw [thick] (-0.5,2.2)--(0.5,2.8);
            \draw (0,2) circle (27pt);
            \node [label = center: $\mathcal{P}$] at (-0.6,1) {};
            \node [label = center: $U^*$] at (0,3.2) {};
            \node [label = center: $J_{\widetilde{p}_0}^c$] at (-0.25,1.9) {};
            
            \draw [thick] (0.3,0.1)--(0.3,0.6);
            \draw [thick] (0.8,0.4)--(0.8,0.9);
            \draw [dashed, thick] (0.8,0.2)--(0.8,0.4);
            
            \draw [thick] (0.3,0.1)--(6.3,3.7);
            \draw [thick] (0.3,0.6)--(6.3,4.2);

            \draw [dashed, thick] (5.8,3.2)--(5.8,3.4);
            \draw [color = blue, thick] (5.8,3.4)--(5.8,3.9);
            \draw [thick] (6.3,3.7)--(6.3,4.2);
            \node [label = center: $\widetilde{J}_{p_{0,m}}^c$] at (5.4,4.3) {};

            \draw [dashed, thick] (5.8,3.4)--(7,2.2);
            \draw [dashed, thick] (5.8,3.9)--(7,2.7);
            \draw [color = blue, thick] (7,2.2)--(7,2.7);
            \node [label = center: $J_{p_{0,m}}^c$] at (7.5,2.4) {};

            \node [label = center: $F^{N_U}(\mathcal{P})$] at (3.3, 3) {};
        \end{tikzpicture}
        \caption{Holonomy maps from $p_0$ to $p_{0,m}$}
        \label{figure: holonomy map p}
    \end{figure}
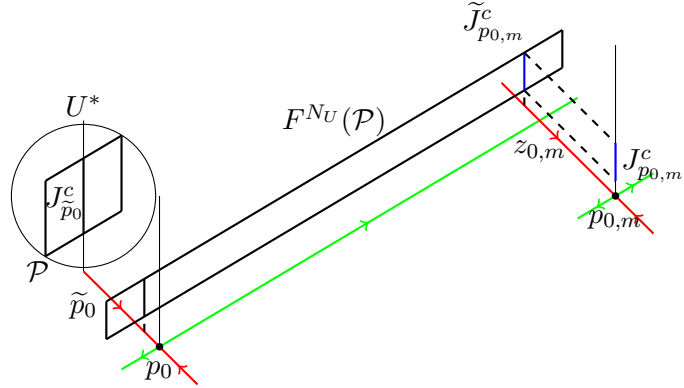

    Similarly, since $\lambda^c(q_0) > 0$ and $W^u(q_0)$ is dense in $M$, we have
    $$V^*\cap \mathcal{F}^{cu}_{loc}(q_0)\neq \varnothing,$$
    and hence we can take
    $$\widetilde{q}_0\in \pi_0(V^*)\cap \mathcal{F}^u_{loc}(q_0) \subseteq M_0.$$
    We can further require that $V^*\cap \mathcal{F}^c(\widetilde{q}_0)$ contains a closed central interval $J_{\widetilde{q}_0}^c$ satisfying $\mathcal{Q} := \mathcal{F}^s_{loc}(J_{\widetilde{q}_0}^c)\subseteq V^*$, see Figure \ref{figure: holonomy map q}.

    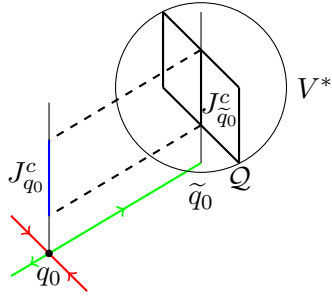
\begin{figure}[htbp]
		\centering
        \begin{tikzpicture}[mid arrow/.style = {decoration = {markings, mark = at position 0.5 with {\arrow{>}}}, postaction = {decorate}}]
            \draw [thick, mid arrow, color = red] (-0.5,0.5)--(0,0);
            \draw [thick, mid arrow, color = red] (0.5,-0.5)--(0,0);
            \draw [thick, mid arrow, color = green] (0,0)--(-0.5,-0.3);
            \draw [thick, mid arrow, color = green] (0,0)--(2,1.2);
            \node [circle, fill, inner sep = 1pt, label = below: $q_0$] at (0,0) {};
            \node [label = below: $\widetilde{q}_0$] at (2,1.2) {};
            
            \draw (0,0)--(0,2);
            \draw (2,1.2)--(2,3.2);

            \draw [thick] (2,1.7)--(2,2.7);
            \draw [thick] (1.5,2.2)--(2.5,1.2);
            \draw [thick] (1.5,3.2)--(2.5,2.2);
            \draw [thick] (1.5,2.2)--(1.5,3.2);
            \draw [thick] (2.5,1.2)--(2.5,2.2);
            \node [label = center: $J_{\widetilde{q}_0}^c$] at (2.25, 1.9) {};
            \node [label = center: $\mathcal{Q}$] at (2.5,1) {};
            
            \draw (2,2.2) circle (32pt);
            \node [label = center: $V^*$] at (3.5, 2.2) {};

            \draw [dashed, thick] (2,1.7)--(0,0.5);
            \draw [dashed, thick] (2,2.7)--(0,1.5);
            \draw [thick, color = blue] (0,0.5)--(0,1.5);
            \node [label = center: $J_{q_0}^c$] at (-0.3, 1) {};
        \end{tikzpicture}
        \caption{The holonomy map near $q_0$}
        \label{figure: holonomy map q}
    \end{figure}
    
    Note that $\mathcal{F}^u(p_0)$ is dense in $M_0$. Hence, we can choose $z_{0,m}\in \mathcal{F}^u(p_0)\pitchfork \mathcal{F}^s_{loc}(p_{0,m})$. Since $p_{0,m}$ converges, we may further assume that $z_{0,m}$ also converges.
    
    In fact, by taking a subsequence we assume that $p_{0,m}$ lies in the local $su$-box of $r_0$, then it suffices to ensure that $\mathcal{F}^u(p_0)\pitchfork \mathcal{F}^s_{loc}(r_0)\neq\varnothing$.
    
    It follows that there exists $N_U\in\mathbb{N}$ independent of $m\in\mathbb{N}$, such that
    $$F^{N_U}(\mathcal{P})\pitchfork \mathcal{F}^c(\mathcal{F}^s_{loc}(p_{0,m}))\neq\varnothing.$$
    In particular, $F^{N_U}(\mathcal{P})\pitchfork \mathcal{F}^c(\mathcal{F}^s_{loc}(p_{0,m}))$ contains a central interval $\widetilde{J}_{p_{0,m}}^c$, whose length has a positive lower bound with respect to $m\in\mathbb{N}$.

    Let $J_{p_{0,m}}^c \subseteq\mathcal{F}^c(p_{0,m})$ be the image of $\widetilde{J}_{p_{0,m}}^c$ under the holonomy map along the stable leaves, restricted to $\mathcal{F}^{c}(\mathcal{F}^{s}_{loc}(p_{0,m}))$. By Lemma \ref{lemma: C1-holonomy}, the length of $J_{p_{0,m}}^c$ also has a positive lower bound with respect to $m\in\mathbb{N}$. Moreover, since $p_{0,m}$ has a uniform size of contracting center and $J_{\widetilde{p}_0}^c$ lies in $W^s(p_0)$, by taking a larger $N_U$, we can assume further that $J_{p_{0,m}}^c \subseteq I_{p_{0,m}}^c$.

    Recall that in Lemma \ref{lemma: W^s and W^u}, we have a $C^{1+}$ parameterization $\gamma_{p_{0,m}}^s : [0,\ 1] \to \bar{I}_{p_{0,m}}^c$ for each $m\in\mathbb{N}$. Since $I_{p_{0,m}}^c$ has a uniform size, we can further require that there exists $0 < \sigma_0 < 1$ such that
    $$\sigma_0 < |(\gamma_{p_{0,m}}^s)'(t)| < \sigma_0^{-1},\ \forall t\in[0,\ 1],\ \forall m\in\mathbb{N}.$$
    
    On the other hand, let $J_{q_0}^c\subseteq \mathcal{F}^c_{loc}(q_0)$ be the image of $J_{\widetilde{q}_0}^c$ under the holonomy map along the unstable leaves, restricted to $\mathcal{F}^{cu}_{loc}(q_0)$. Since $\lambda^c(q_0) > 0$, we may assume further that $J_{q_0}^c\subseteq I_{q_0}^c$. Similarly, we have a $C^{1+}$ parameterization $\gamma^s_{q_0}: [0, 1] \to \bar{I}_{q_0}^c$ satisfying
    $$\sigma_0 < |(\gamma^s_{q_0})'(t)| < \sigma_0^{-1},\ \forall t\in[0,\ 1].$$

    \noindent\textbf{Exhibit central intersection.} From now on, we fix $m\in\mathbb{N}$ so large that
    $$L(\varepsilon_m) < \min\left\{\inf_{m\in\mathbb{N}}|(\gamma_{p_{0,m}}^s)^{-1}(J_{p_{0,m}}^c)|,\ |(\gamma_{q_0}^s)^{-1}(J_{q_0}^c)|\right\}.$$
    
    Here $L(\cdot)$ is given by Proposition \ref{proposition: center intersection} and $\varepsilon_m$ is defined in Claim \ref{claim: rational independence} satisfying
    $$(S_\varphi F_0(p_{0,m}),\ S_\varphi F_0(q_0)) < \varepsilon_m.$$
    
    Without loss of generality, we assume that $p_{0,m}$ is a fixed point. Fix $r_{0,m} \in \mathcal{F}^u(p_{0,m}) \pitchfork \mathcal{F}^s(q_0)$. For simplicity, we denote by $\mathrm{Hol}_x^y$ the holonomy map from $x$ to $y$ along stable or unstable manifolds defined on $cu$-leaves or $cs$-leaves, see Figure \ref{figure: central intersection}.

    \begin{figure}[htbp]
		\centering
        \begin{tikzpicture}[mid arrow/.style = {decoration = {markings, mark = at position 0.5 with {\arrow{>}}}, postaction = {decorate}}]
            \draw [thick, mid arrow, color = green] (0,0)--(-0.5,-0.3);
            \draw [thick, mid arrow, color = green] (0,0)--(6.5,3.9);
            \draw [thick, mid arrow, color = red] (-1,1)--(0,0);
            \draw [thick, mid arrow, color = red] (0.5,-0.5)--(0,0);
            \draw (0,0)--(0,2);
            \node [circle, fill, inner sep = 1pt, label = below: $p_{0,m}$] at (0,0) {};

            \draw [thick, mid arrow, color = red] (4,4)-- (8,0);
            \draw [thick, mid arrow, color = red] (8.5,-0.5)-- (8,0);
            \draw [thick, mid arrow, color = green] (8,0)-- (7.5,-0.3);
            \draw [thick, mid arrow, color = green] (8,0)-- (9.5,0.9);
            \draw (8,0)--(8,2);
            \node [circle, fill, inner sep = 1pt, label = below: $q_0$] at (8,0) {};

            \node [circle, fill, inner sep = 1pt, label = below: $r_{0,m}$] at (5,3) {};
            \draw (5,3)--(5,5.5);

            \draw [thick, color = green] (-1,0.2)--(6,4.4);
            \draw [thick, color = red] (4.6, 4.4)--(9.2, -0.2);
            
            \draw [thick, color = blue] (0,0.3)-- (0,1.3);
            \node [label = right: $F^{k_n}(J_{p_{0,m}}^c)$] at (0, 0.3) {};
            \draw [color = blue] (0,0.3)--(5,3.3);
            \draw [color = blue] (0,1.3)--(5,4.3);

            \draw [thick, color = blue] (8,0.6)-- (8,1.8);
            \node [label = left: $F^{-l_n}(J_{q_0}^c)$] at (8, 0.6) {};
            \draw [color = blue] (8,0.6)--(5,3.6);
            \draw [color = blue] (8,1.8)--(5,4.8);

            \draw [thick, color = blue] (5,3.3)--(5,4.3);
            \draw [thick, color = blue] (5,3.6)--(5,4.8);

            \draw (-0.5,0.5)--(-0.5,2.5);
            \draw (8.625,0.375)--(8.625,2.375);
            \draw (5.125,3.875)--(5.125,5.875);
            
            \draw [dashed, color = blue] (0,0.3)--(-0.5,0.7);
            \draw [dashed, color = blue] (0,1.3)--(-0.5,1.7);
            
            \draw [thick, color = blue] (-0.5,0.7)--(-0.5,1.7);
            \draw [color = blue] (-0.5,0.7)--(5.125,4.075);
            \draw [color = blue] (-0.5,1.7)--(5.125,5.075);
            \node [label = center: $F^{k_n}(\widetilde{J}_{p_{0,m}}^c)$] at (-1.5,1.5) {};

            \draw [dashed, color = blue] (8,0.6)--(8.625,0.9);
            \draw [dashed, color = blue] (8,1.8)--(8.625,2.1);

            \draw [thick, color = blue] (8.625,0.9)--(8.625,2.1);
            \draw [color = blue] (8.625,0.9)--(5.125,4.4);
            \draw [color = blue] (8.625,2.1)--(5.125,5.6);
            \node [label = center: $F^{-l_n}(J_{\widetilde{q}_0}^c)$] at (9.5,1.5) {};

            \draw [thick, color = blue] (5.125,4.075)--(5.125,5.6);
            
            \node [circle, fill, inner sep = 1pt, label = right: $r_{0,m}^n$] at (5.125,3.875) {};
            \node [circle, fill, inner sep = 1pt, label = below: $u_n$] at (4.5,3.5) {};
            \node [circle, fill, inner sep = 1pt, label = below: $v_n$] at (5.625,3.375) {};
        \end{tikzpicture}
        \caption{The picture of central intersections}
        \label{figure: central intersection}
    \end{figure}
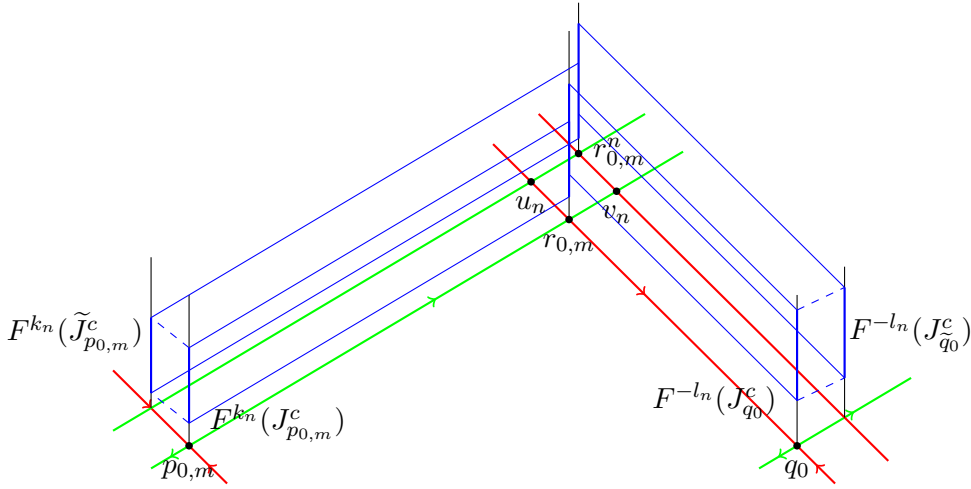
    
    \begin{claim}\label{claim: center intersection}
        There exist two strictly increasing subsequences $\{k_n\},\ \{l_n\}$ of $\mathbb{N}$ such that
        $$H\circ F^{k_n}(J_{p_{0,m}}^c)\cap F^{-l_n}(J_{q_0}^c)\neq\varnothing.$$
        Here $$H := \mathrm{Hol}_{r_{0,m}}^{q_0}\circ\mathrm{Hol}_{p_{0,m}}^{r_{0,m}}.$$ 
        Moreover, there exists a constant $c > 0$ such that
        $$\left|H\circ F^{k_n}(J_{p_{0,m}}^c)\cap F^{-l_n}(J_{q_0}^c)\right| \geq c\cdot\max\{e^{k_n\lambda^c(p_{0,m})},\ e^{-l_n\lambda^c(q_0)}\},\ \forall n\in\mathbb{N}.$$
    \end{claim}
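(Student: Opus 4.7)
The plan is to reduce the statement to the one-dimensional situation handled by Proposition \ref{proposition: center intersection}, using $\gamma_{p_{0,m}}^s$ and $\gamma_{q_0}^s$ to pull everything back to $[0,1]$.

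First, I would set up the two one-dimensional maps. Since $p_{0,m}$ is a fixed point with $\lambda^c(p_{0,m})<0$, Lemma \ref{lemma: W^s and W^u} applied to $p_{0,m}$ shows that
$$
f := (\gamma_{p_{0,m}}^s)^{-1}\circ F\circ \gamma_{p_{0,m}}^s \in \mathrm{Diff}_\partial^{1+}([0,1])
$$
has exactly two fixed points, a sink at $0$ and a source at $1$, with $\alpha := f'(0) = e^{\lambda^c(p_{0,m})}\in(0,1)$. The analogous lemma for $F^{-1}$ at the expanding-center fixed point $q_0$ yields
$$
g := (\gamma_{q_0}^s)^{-1}\circ F^{-1}\circ \gamma_{q_0}^s \in \mathrm{Diff}_\partial^{1+}([0,1]),
$$
with sink at $0$, source at $1$, and $\beta := g'(0) = e^{-\lambda^c(q_0)}\in(0,1)$. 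By Lemma \ref{lemma: C1-holonomy}, the holonomy $H$ restricts to a $C^1$ diffeomorphism $\mathcal{F}^c_{loc}(p_{0,m})\to \mathcal{F}^c_{loc}(q_0)$ with $H(p_{0,m})=q_0$, so $h_0 := (\gamma_{q_0}^s)^{-1}\circ H\circ \gamma_{p_{0,m}}^s$ is a $C^1$ map near $0$ with $h_0(0)=0$ and $h_0'(0)>0$. Extend $h_0$ to a global $h\in \mathrm{Diff}_\partial^1([0,1])$ coinciding with $h_0$ on a neighborhood of $0$; this is sufficient because the proposition only evaluates $h$ on $f^{k_n}(I)$, which is forced into any prescribed neighborhood of $0$ as $k_n\to+\infty$.

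Second, I would verify the rational-independence hypothesis. Since $p_{0,m}$ and $q_0$ are fixed points,
$$
\ln\alpha = \lambda^c(p_{0,m}) = S_\varphi F_0(p_{0,m}),\qquad \ln\beta = -\lambda^c(q_0) = -S_\varphi F_0(q_0).
$$
The definition of $(\cdot,\cdot)$ is symmetric under sign change of either argument, so
$$
(\ln\alpha,\ \ln\beta) = (S_\varphi F_0(p_{0,m}),\ S_\varphi F_0(q_0)) < \varepsilon_m
$$
by the choice of $p_{0,m}$ from Claim \ref{claim: rational independence}. Next, set $I := (\gamma_{p_{0,m}}^s)^{-1}(J_{p_{0,m}}^c)$ and $J := (\gamma_{q_0}^s)^{-1}(J_{q_0}^c)$, both closed subintervals of $(0,1)$. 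By the choice of $m$, $|J| > L(\varepsilon_m)$, so Proposition \ref{proposition: center intersection} applies, producing strictly increasing sequences $\{k_n\},\{l_n\}\subseteq \mathbb{N}$ and $c_0>0$ such that
$$
h\circ f^{k_n}(I)\cap g^{l_n}(J)\neq\varnothing, \qquad |h\circ f^{k_n}(I)\cap g^{l_n}(J)| \geq c_0\cdot\max\bigl\{e^{k_n\ln\alpha},\ e^{l_n\ln\beta}\bigr\}.
$$

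Finally, I would translate back to the central leaves. Conjugation by $\gamma_{p_{0,m}}^s$ sends $f^{k_n}(I)$ to $F^{k_n}(J_{p_{0,m}}^c)\subseteq \mathcal{F}^c(p_{0,m})$; applying $H$ and then $(\gamma_{q_0}^s)^{-1}$ gives the image $h\circ f^{k_n}(I)$, i.e., $H\circ F^{k_n}(J_{p_{0,m}}^c)\subseteq \mathcal{F}^c(q_0)$; and $\gamma_{q_0}^s(g^{l_n}(J)) = F^{-l_n}(J_{q_0}^c)$. Hence
$$
H\circ F^{k_n}(J_{p_{0,m}}^c)\cap F^{-l_n}(J_{q_0}^c)\neq\varnothing.
$$
Because $\sigma_0 < |(\gamma_{q_0}^s)'| < \sigma_0^{-1}$, the central length of the intersection differs from its $[0,1]$ counterpart by a bounded factor, and since $e^{k_n\ln\alpha}=e^{k_n\lambda^c(p_{0,m})}$ and $e^{l_n\ln\beta}=e^{-l_n\lambda^c(q_0)}$, absorbing $\sigma_0$ into the constant yields the claimed lower bound with some $c>0$.

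The main technical nuisance is bookkeeping: making sure the holonomy $H$ really defines the needed one-dimensional map between the two parametrized intervals and matches the $f$-$g$-$h$ framework of Proposition \ref{proposition: center intersection}, and confirming that the symmetry $(a,b)=(a,-b)$ lets us pass from the asymptotic rational independence of the Lyapunov exponents to that of $\ln\alpha$ and $\ln\beta$. Once these identifications are in place the remainder is a direct application of the proposition.
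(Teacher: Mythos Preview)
Your proposal is correct and follows essentially the same route as the paper: both reduce to Proposition~\ref{proposition: center intersection} by taking $f=(\gamma_{p_{0,m}}^s)^{-1}\circ F\circ\gamma_{p_{0,m}}^s$, $g=(\gamma_{q_0}^s)^{-1}\circ F^{-1}\circ\gamma_{q_0}^s$, and $h=(\gamma_{q_0}^s)^{-1}\circ H\circ\gamma_{p_{0,m}}^s$, then invoke the choice of $m$ to ensure $|J|>L(\varepsilon_m)$. Your added care about extending $h$ to a global element of $\mathrm{Diff}_\partial^1([0,1])$ and about transferring the length estimate via the bi-Lipschitz bound on $\gamma_{q_0}^s$ are points the paper leaves implicit, but the argument is otherwise identical.
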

    
    \begin{proof}[Proof of Claim \ref{claim: center intersection}]
        Note that
        $$(S_\varphi F_0(p_{0,m}),\ S_\varphi F_0(q_0)) < \varepsilon_m,$$
        which means that
        $$\left(\ln\left\|DF|_{E^c(p_{0,m})}\right\|,\ \ln\left\|DF|_{E^c(q_0)}\right\|\right) < \varepsilon_m,$$
        since $p_{0,m}$ and $q_0$ are assumed to be fixed points. Equivalently we have
        $$\left(\ln\left\|DF|_{E^c(p_{0,m})}\right\|,\ \ln\left\|DF^{-1}|_{E^c(q_0)}\right\|\right) < \varepsilon_m.$$
        The conclusion then follows from Proposition \ref{proposition: center intersection}, by taking
        \begin{align*}
            &f = (\gamma_{p_{0,m}}^s)^{-1}\circ F\circ\gamma_{p_{0,m}}^s,\\
            &g = (\gamma^s_{q_0})^{-1}\circ F^{-1}\circ \gamma^s_{q_0},\\
            &h = (\gamma^s_{q_0})^{-1}\circ H\circ\gamma_{p_{0,m}}^s.\\
        \end{align*}
        The initial intervals are $(\gamma_{p_{0,m}}^s)^{-1}(J_{p_{0,m}}^c)$ and $(\gamma^s_{q_0})^{-1}(J_{q_0}^c)$.
    \end{proof}

    As a result, there exists a constant $c > 0$ such that
    $$\left|J_{p,m}^n\cap J_q^n\right|\geq c\cdot\max\left\{e^{k_n\lambda^c(p_{0,m})},\ e^{-l_n\lambda^c(q_0)}\right\},\forall n\in\mathbb{N},$$
    where
    \begin{align*}
        &J_{p,m}^n := \mathrm{Hol}_{p_{0,m}}^{r_{0,m}}\circ F^{k_n}(J_{p_{0,m}}^c);\\
        &J_q^n := (\mathrm{Hol}_{r_{0,m}}^q)^{-1}\circ F^{-l_n}(J_{q_0}^c).
    \end{align*}
    
    Now consider the intersection between $F^{k_n}(\mathcal{F}^u_{loc}(\widetilde{J}_{p_{0,m}}^c))$ and $F^{-l_n}(\mathcal{F}^s_{loc}(J_{\widetilde{q}_0}^c))$. Since $F_0$ is an Anosov diffeomorphism on a nilmanifold, for $n$ sufficiently large, their projections on $M_0$, $\pi_0(F^{k_n}(\mathcal{F}^u_{loc}(\widetilde{J}_{p_{0,m}}^c)))$ and $\pi_0(F^{-l_n}(\mathcal{F}^s_{loc}(J_{\widetilde{q}_0}^c)))$, intersect at $r_{0,m}^n\in M_0$, which converges to $r_{0,m}$ as $n \to +\infty$. Moreover, we have
    \begin{align*}
        u_n &\in \mathcal{F}^u_{loc}(r_{0,m}^n)\pitchfork \mathcal{F}^s_{loc}(r_{0,m}),\\
        v_n &\in \mathcal{F}^s_{loc}(r_{0,m}^n)\pitchfork \mathcal{F}^u_{loc}(r_{0,m}).
    \end{align*}
    For simplicity, we denote the intersections by
    \begin{align*}
        &\widehat{J}_{p,m}^n := F^{k_n}(\mathcal{F}^u_{loc}(\widetilde{J}_{p_{0,m}}^c))\cap \mathcal{F}^c(r_{0,m}^n);\\
        &\widehat{J}_q^n := F^{-l_n}(\mathcal{F}^s_{loc}(J_{\widetilde{q}_0}^c)) \cap \mathcal{F}^c(r_{0,m}^n).
    \end{align*}
    Our aim is to show that $\widehat{J}_{p,m}^n\cap \widehat{J}_q^n\neq\varnothing$, see Figure \ref{figure: central perturbations}.

    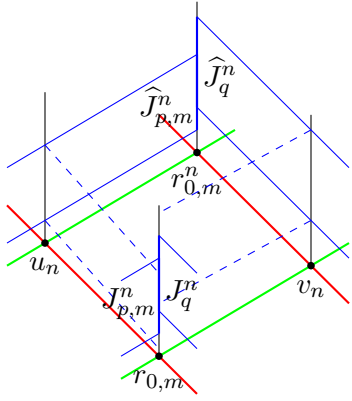
\begin{figure}[htbp]
		\centering
        \begin{tikzpicture}[mid arrow/.style = {decoration = {markings, mark = at position 0.5 with {\arrow{>}}}, postaction = {decorate}}]
            \draw [thick, color = green] (-0.5,-0.3)--(2.5,1.5);
            \draw [thick, color = green] (-2,1.2)--(1,3);
            \draw [thick, color = red] (-2,2)--(0.5,-0.5);
            \draw [thick, color = red] (0,3.2)--(2.5,0.7);
            \node [circle, fill, inner sep = 1pt, label = below: $r_{0,m}$] at (0,0) {};
            \node [circle, fill, inner sep = 1pt, label = below: $r^n_{0,m}$] at (0.5,2.7) {};
            \node [circle, fill, inner sep = 1pt, label = below: $u_n$] at (-1.5,1.5) {};
            \node [circle, fill, inner sep = 1pt, label = below: $v_n$] at (2,1.2) {};
            \draw (0,0)--(0,2);
            \draw (-1.5,1.5)--(-1.5,3.5);
            \draw (0.5,2.7)--(0.5,4.7);
            \draw (2,1.2)--(2,3.2);

            \draw [thick, color = blue] (0,0.3)--(0,1.3);
            \draw [thick, color = blue] (0,0.6)--(0,1.6);
            \draw [color = blue] (0,0.3)--(-0.5,0);
            \draw [color = blue] (0,1.3)--(-0.5,1);
            \draw [color = blue] (0,0.6)--(0.5,0.1);
            \draw [color = blue] (0,1.6)--(0.5,1.1);
            
            \draw [thick, color = blue] (0.5,3)--(0.5,4);
            \draw [color = blue] (0.5,3)--(-2,1.5);
            \draw [color = blue] (0.5,4)--(-2,2.5);
            \draw [dashed, color = blue] (-1.5,1.8)--(0,0.1);
            \draw [dashed, color = blue] (-1.5,2.8)--(0,1.1);
            
            \draw [thick, color = blue] (0.5,3.3)--(0.5,4.5);
            \draw [color = blue] (0.5,3.3)--(2.5,1.3);
            \draw [color = blue] (0.5,4.5)--(2.5,2.5);
            \draw [dashed, color = blue] (2,1.8)--(0,0.7);
            \draw [dashed, color = blue] (2,3)--(0,1.9);

            \node [label = center: $\widehat{J}_{p,m}^n$] at (0.1,3.3) {};
            \node [label = center: $\widehat{J}_q^n$] at (0.8,3.7) {};
            \node [label = center: $J_{p,m}^n$] at (-0.4,0.7) {};
            \node [label = center: $J_q^n$] at (0.3,0.8) {};
        \end{tikzpicture}
        \caption{The picture near $r_{0,m}$}
        \label{figure: central perturbations}
    \end{figure}
    
    By Lemma \ref{lemma: C1-holonomy} (the $C^1$ property of holonomy maps) and Lemma \ref{lemma: central twist} (the estimate of central twists), there exists a constant $C\geq 1$ such that
    \begin{align*}
        &\mathrm{Hol}_{u_n}^{r_{0,m}}\circ \mathrm{Hol}_{r_{0,m}^n}^{u_n}(\widehat{J}_{p,m}^n) \text{ is a } Ce^{k_n\lambda_{p,m}}\text{-perturbation of } J_{p,m}^n;\\
        &\mathrm{Hol}_{v_n}^{r_{0,m}}\circ \mathrm{Hol}_{r_{0,m}^n}^{v_n}(\widehat{J}_q^n) \text{ is a } Ce^{-l_n\lambda_q} \text{-perturbation of } J_q^n,
    \end{align*}
    as intervals in $\mathcal{F}^c(r_{0,m})$, where
    \begin{align*}
        \lambda_{p,m} &:= \frac{\lambda^u(p_{0,m}) - \lambda^c(p_{0,m})}{\lambda^u(p_{0,m}) - \lambda^s(p_{0,m})}\lambda^s(p_{0,m}) < \lambda^c(p_{0,m}) < 0,\\
        \lambda_q &:= \frac{-\lambda^s(q_0) + \lambda^c(q_0)}{-\lambda^s(q_0) + \lambda^u(q_0)}\lambda^u(q_0) > \lambda^c(q_0) > 0.
    \end{align*}
    Again, by the argument of central twist in Lemma \ref{lemma: central twist}, for $n$ sufficiently large and $x\in \widehat{J}_q^n$, we have
    \begin{align*}
        &d_c\left(\mathrm{Hol}_{v_n}^{r_{0,m}}\circ \mathrm{Hol}_{r_{0,m}^n}^{v_n}(x),\ \mathrm{Hol}_{u_n}^{r_{0,m}}\circ \mathrm{Hol}_{r_{0,m}^n}^{u_n}(x)\right)\\
        \leq& C\left[d_u(r_{0,m}^n,\ u_n) + d_s(u_n,\ r_{0,m}) + d_s(r_{0,m}^n,\ v_n) + d_u(v_n,\ r_{0,m})\right]\\
        \leq& C^2\left[e^{k_n\lambda^s(p_{0,m})} + e^{-l_n\lambda^u(q_0)}\right].
    \end{align*}
    Compared with the length of the intersection, which is controlled by
    $$\left|J_{p,m}^n\cap J_q^n\right|\geq c\cdot\max\left\{e^{k_n\lambda^c(p_{0,m})},\ e^{-l_n\lambda^c(q_0)}\right\},$$
    we conclude that there exists $n_0\in\mathbb{N}$, such that $\widehat{J}_{p,m}^n\cap \widehat{J}_q^n\neq\varnothing,\ \forall n \geq n_0$. Since $\widehat{J}_p^n\subseteq U^*$, $\widehat{J}_q^n\subseteq V^*$, we have $U^*\cap V^*\neq\varnothing$. This ends the proof of Theorem \ref{theorem: BI to TT}.
\end{proof}

\begin{corollary}\label{corollary: C1 and C2}
    If $F\in\mathrm{PHI}^1(M)$ is boundary interconnected, then there exists an open neighborhood $\mathcal{U}\subseteq \mathrm{PHI}^1(M)$ of $F$ such that every $G\in\mathcal{U}\cap\mathrm{PHI}^2(M)$ is topologically transitive.
\end{corollary}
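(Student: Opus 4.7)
The plan is to combine two observations: the property of boundary interconnection is $C^1$-open in $\mathrm{PHI}^1(M)$, and Theorem \ref{theorem: BI to TT} promotes boundary interconnection to topological transitivity whenever the diffeomorphism is $C^2$. So given $F\in\mathrm{PHI}^1(M)$ that is boundary interconnected, I would first construct a $C^1$-neighborhood $\mathcal{U}\subseteq\mathrm{PHI}^1(M)$ of $F$ inside which every map remains boundary interconnected, and then invoke Theorem \ref{theorem: BI to TT} on each $G\in\mathcal{U}\cap\mathrm{PHI}^2(M)$.

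For the openness, let $p_i,\ q_i\in M_i$ be the pairs of periodic points witnessing boundary interconnection for $F$. Each of them is hyperbolic as a periodic point of $F_i$ since $F_i$ is Anosov, and by Remark \ref{remark: PHI} the splitting $T_{p_i}M = T_{p_i}M_i\oplus E^c(p_i)$ together with the strict sign $\lambda^c(p_i)\neq 0$ promotes each $p_i$ (and similarly each $q_i$) to a hyperbolic periodic point of $F$. Since $\mathrm{PHI}^1(M)$ is $C^1$-open in $\mathrm{Diff}_\partial^1(M)$ (Remark \ref{remark: PHI}), every $C^1$-nearby $G$ is partially hyperbolic with interval central leaves, and the hyperbolic continuations $p_i(G),\ q_i(G)$ exist, persist on the invariant boundaries $M_i$, and depend continuously on $G$ together with their central Lyapunov exponents. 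Shrinking $\mathcal{U}$ preserves the sign conditions $\lambda^c(p_0(G))<0<\lambda^c(p_1(G))$ and $\lambda^c(q_1(G))<0<\lambda^c(q_0(G))$. By Remark \ref{remark: BI}, the intersections $W^s(p_0)\cap W^u(p_1)$ and $W^s(q_1)\cap W^u(q_0)$ are transverse intersections of invariant manifolds of hyperbolic periodic points with one-dimensional intersection (a central interval), and such robust transverse intersections persist under $C^1$ perturbation by the local stable manifold theorem \cite{HPS77}. Shrinking $\mathcal{U}$ once more therefore yields the boundary interconnection of every $G\in\mathcal{U}$.

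The final step is now immediate: any $G\in\mathcal{U}\cap\mathrm{PHI}^2(M)$ is a $C^2$ partially hyperbolic diffeomorphism with interval central leaves that is boundary interconnected, so Theorem \ref{theorem: BI to TT} applies and gives the topological transitivity of $G$. The only real content lies in the openness step, and the point worth emphasizing is that everything entering the definition of boundary interconnection—the periodic continuations, the signs of the central exponents, the dimensions of $W^s(p_0)$ and $W^u(p_1)$, and their transverse intersection—is already stable in the coarser $C^1$ topology; the $C^2$ hypothesis is used only afterwards, on each individual $G$, through the holonomy regularity (Lemma \ref{lemma: C1-holonomy}) and the central twist estimate (Lemma \ref{lemma: central twist}) invoked inside the proof of Theorem \ref{theorem: BI to TT}.
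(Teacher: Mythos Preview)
Your argument is correct and follows exactly the paper's approach: the paper's proof simply asserts that boundary interconnection is $C^1$-open in $\mathrm{PHI}^1(M)$ and then invokes Theorem~\ref{theorem: BI to TT}, while you supply the details behind that openness claim (hyperbolic continuation, persistence of the sign of $\lambda^c$, and robustness of the transverse intersections). Your closing remark on where the $C^2$ hypothesis actually enters is accurate and matches the paper's own commentary in Remark~\ref{remark: main theorem A}.
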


\begin{proof}
    Notice that the boundary interconnection property is $C^1$-open in $\mathrm{PHI}^1(M)$. The conclusion then follows directly from Theorem \ref{theorem: BI to TT}.
\end{proof}

\section{Robust Topological Transitivity Implies Boundary Interconnection}\label{section: RTT to BI}
In this section, we complete the proof of Theorem \ref{theorem: main theorem A} by showing that robust topological transitivity implies boundary interconnection.

\begin{theorem}\label{theorem: RTT to BI}
    If $F\in\mathrm{PHI}^k(M)\ (k \geq 2)$ is $C^k$-robustly transitive, then $F$ is boundary interconnected.
\end{theorem}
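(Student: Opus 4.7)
The plan is to prove the contrapositive: assume $F\in\mathrm{PHI}^k(M)$ is not boundary interconnected and construct a $C^k$-small perturbation $F^{\ast}\in\mathrm{PHI}^k(M)$ that is not topologically transitive, contradicting $C^k$-robust transitivity. Since the boundary interconnection property is the conjunction of two mirror-symmetric requirements (one for each pair $(p_0,p_1)$ and $(q_0,q_1)$), it suffices by the symmetry $M_0 \leftrightarrow M_1$ to treat the failure of the first pair. Introducing
\[
\mathrm{Per}^-(F_i):=\{p\in\mathrm{Per}(F_i):\lambda^c(p)<0\},\qquad
\mathrm{Per}^+(F_i):=\{p\in\mathrm{Per}(F_i):\lambda^c(p)>0\},
\]
the failure splits naturally into a \emph{missing-sign} subcase (one of $\mathrm{Per}^-(F_0)$ or $\mathrm{Per}^+(F_1)$ is empty) and a \emph{missing-intersection} subcase (both are non-empty but $W^s(p_0)\cap W^u(p_1)=\varnothing$ for every eligible pair).

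In the missing-sign subcase, suppose $\mathrm{Per}^-(F_0)=\varnothing$, so that $\lambda^c(p)\geq 0$ for every $p\in\mathrm{Per}(F_0)$. Lemma \ref{lemma: positive Livsic} applied to $\varphi(x)=\ln\|DF|_{E^c(x)}\|$ on the Anosov boundary $(M_0,F_0)$ yields a H\"older coboundary lower bound for $\varphi$. I then compose $F$ with a $C^k$-small diffeomorphism supported in a thin one-sided collar of $M_0$ which uniformly stretches the central direction by a factor $1+\eta$; the resulting $F^{\ast}\in\mathrm{PHI}^k(M)$ satisfies $\lambda^c_{F^{\ast}}(p)\geq \eta-o(1)>0$ for every periodic $p$ of $F^{\ast}_0$, and a second application of Positive Liv\v{s}ic upgrades this to a uniform positive lower bound on $\lambda^c_{F^{\ast}}$ throughout $M_0$, making $M_0$ a normally repelling invariant submanifold for $F^{\ast}$. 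Running the mirror construction on $M_1$ (when $\mathrm{Per}^+(F_1)$ is also empty, turning $M_1$ into a normally attracting invariant submanifold) produces a proper forward-invariant open neighborhood of $M_1$, whose existence obstructs transitivity. The asymmetric case, in which only one of $\mathrm{Per}^-(F_0)$ or $\mathrm{Per}^+(F_1)$ is empty, is handled by combining the above perturbation with the heteroclinic analysis of the next subcase applied to $F^{\ast}$.

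In the missing-intersection subcase, Lemma \ref{lemma: W^s and W^u} describes $W^s(p_0)$ as an open $cs$-saturated subset of $\mathcal{F}^{cs}(p_0)$ whose central trace is a positive-length half-open interval anchored at $p_0$, and dually for $W^u(p_1)\subseteq\mathcal{F}^{cu}(p_1)$. The non-intersection condition forces, on every common central fibre lying in $\mathcal{F}^{cs}(p_0)\cap\mathcal{F}^{cu}(p_1)$ (a dense collection of fibres, by minimality of $\mathcal{F}^s$ and $\mathcal{F}^u$ on the nilmanifold boundaries, as used in the proof of Theorem \ref{theorem: BI to TT}), a strictly positive gap between the two central arcs. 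Propagating these gaps via the dense $cs$- and $cu$-foliations globalises them into a pair of disjoint $F$-invariant saturated regions, and a $C^k$-perturbation localised inside a chosen gap opens these into disjoint non-empty $F^{\ast}$-invariant open sets, contradicting transitivity of $F^{\ast}$.

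The main obstacle lies in the missing-intersection subcase: converting the a priori purely topological failure of $W^s(p_0)\cap W^u(p_1)=\varnothing$ into a $C^k$-robust dynamical barrier requires quantitative control over how gaps between central arcs persist under both iteration and perturbation. For this I expect to rely on the central-twist estimate of Lemma \ref{lemma: central twist} together with the $C^1$-regularity of holonomy maps from Lemma \ref{lemma: C1-holonomy}, which jointly ensure that the separating gap has a uniform positive length and survives $C^k$-small perturbation of $F$. Once this robustness is secured, the contradiction with $C^k$-robust transitivity is immediate and completes the proof.
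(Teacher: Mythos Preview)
Your missing-sign subcase is essentially the paper's Lemma \ref{lemma: perturbation for attractor}, but you over-complicate it. A single normally hyperbolic boundary already kills transitivity: if a perturbation $G$ makes $M_0$ an attractor (or repeller), there is a proper open neighborhood $W$ of $M_0$ with $G(\overline{W})\subseteq W$ (or $G^{-1}(\overline{W})\subseteq W$), and then $W$ and $M\setminus\overline{W}$ are disjoint non-empty open sets that no $G$-orbit can connect in the forbidden direction. You do not need to touch $M_1$ at all, and your ``asymmetric case'' detour through the heteroclinic analysis is unnecessary.

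The real gap is your missing-intersection subcase. The paper does \emph{not} perturb here. Once both signs of central exponent are present on each boundary, the transitivity of $F$ itself (which is part of robust transitivity, since $F\in\mathcal{U}$) forces $W^s(p_0)\cap W^u(p_1)\neq\varnothing$ directly. The argument is short: write $W^s(p_0)=\bigcup_n F^{-n}(\mathcal{F}^{cs}_{loc}(p_0))$ and $W^u(p_1)=\bigcup_n F^{n}(\mathcal{F}^{cu}_{loc}(p_1))$, take a point $x$ with dense forward orbit, find $i_0$ with $F^{i_0}(x)\in\mathcal{F}^s_{\varepsilon_0}(z_1)$ for some $z_1\in\mathcal{F}^{cu}_{loc}(p_1)$, and then a much later iterate $F^{i_0+N}(x)\in\mathcal{F}^u_\varepsilon(\mathcal{F}^{cs}_{loc}(p_0))$. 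Since the $s$-distance between $F^{i_0+N}(x)$ and $F^N(z_1)$ has contracted, $F^N(z_1)$ lies in $\mathcal{F}^u_{\varepsilon_0}(z_0)$ for some $z_0\in\mathcal{F}^{cs}_{loc}(p_0)$; as $W^u(p_1)$ is $F$-invariant and $u$-saturated, $z_0\in W^u(p_1)\cap W^s(p_0)$.

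By contrast, your proposed mechanism---propagating ``gaps'' along dense $cs$- and $cu$-leaves and then opening them by a local perturbation into disjoint invariant open sets---does not work as stated. The gaps live on individual central fibres indexed by a dense but measure-zero set of heteroclinic base points; there is no evident way to assemble them into a single $F$-invariant region with non-empty interior, and a perturbation supported in one gap has no reason to create a global invariant barrier. Lemmas \ref{lemma: C1-holonomy} and \ref{lemma: central twist} control how individual arcs move under holonomy and iteration, but they do not provide the kind of global separating hypersurface your argument would need. The paper sidesteps all of this: robust transitivity is used \emph{only} to obtain the sign conditions via perturbation, and after that mere transitivity finishes the job.
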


As a preparation of the proof, we need the following method to construct an attractor by perturbation.

\begin{lemma}\label{lemma: perturbation for attractor}
    Let $F\in\mathrm{PHI}^k(M)\ (k \geq 2)$. If $\lambda^c(p) \leq 0$ for every $p\in\mathrm{Per}(F_0)$, then for any neighborhood $\mathcal{U}\subseteq \mathrm{PHI}^k(M)$ of $F$, there exists $G\in\mathcal{U}$ such that $M_0$ is an attractor of $G$.
\end{lemma}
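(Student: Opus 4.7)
\medskip

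\noindent\textbf{Proof plan.} The strategy has two stages: first, use the Positive Liv\v sic Theorem to produce a H\"older coboundary bound on $\varphi(x):=\ln\|DF|_{E^c(x)}\|$ over $M_0$; second, construct an explicit $C^k$-small perturbation $G=\phi\circ F$ that contracts in the center direction near $M_0$ by a uniform factor $e^{-\varepsilon}$, so that the new central exponent, combined with the coboundary bound, drives all Birkhoff sums uniformly to $-\infty$ and yields a trapping tubular neighborhood of $M_0$.

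\medskip

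\noindent\emph{Step 1 (Liv\v sic).} Since $S_{-\varphi}F_0(p)=-\pi(p)\lambda^c(p)\geq 0$ for every $p\in\mathrm{Per}(F_0)$ and $\varphi|_{M_0}$ is H\"older (as $F\in C^2$ and $E^c$ is H\"older), Lemma \ref{lemma: positive Livsic} applied to the pair $(F_0,-\varphi|_{M_0})$ gives a H\"older continuous $\psi:M_0\to\mathbb{R}$ with
$$\varphi(x)\leq\psi(F_0(x))-\psi(x),\qquad\forall\,x\in M_0.$$
Let $C_\psi:=\|\psi\|_{C^0}<\infty$.

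\medskip

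\noindent\emph{Step 2 (Perturbation).} Fix $\varepsilon>0$ small. Choose a smooth 1-dimensional subbundle $\widetilde{E}^c$ on a tubular neighborhood $\mathcal{N}$ of $M_0$, transverse to $M_0$ and $C^0$-close to $E^c_F|_{M_0}$, and integrate it to a smooth foliation $\widetilde{\mathcal{F}}^c$ of $\mathcal{N}$. Define $\phi\in\mathrm{Diff}_\partial^k(M)$ by $\phi=\mathrm{Id}$ outside $\mathcal{N}$ and, inside $\mathcal{N}$, by acting along each leaf of $\widetilde{\mathcal{F}}^c$ as a diffeomorphism fixing both endpoints (the one in $M_0$ and the one in $\partial\mathcal{N}$), with derivative $e^{-\varepsilon}$ at the $M_0$-endpoint and equal to $\mathrm{Id}$ outside a smaller collar; a standard bump-function interpolation makes $\phi$ $C^k$-close to the identity, with distance controlled by $\varepsilon$. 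Set $G:=\phi\circ F$. For $\varepsilon$ small, $G\in\mathcal{U}\cap\mathrm{PHI}^k(M)$ and $G|_{M_0}=F_0$, since $\phi|_{M_0}=\mathrm{Id}$.

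\medskip

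\noindent\emph{Step 3 (Effect on the central exponent).} Because $G$ is $C^1$-close to $F$, the splitting $E^c_G$ at $x\in M_0$ is $C^0$-close to $E^c_F(x)$, and by construction $\widetilde{E}^c$ is close to both. Tracking the composition $DG(x)=D\phi(F_0(x))\circ DF(x)$, the contraction of $\phi$ by factor $e^{-\varepsilon}$ along $\widetilde{\mathcal{F}}^c$ translates, up to an error controlled by the $C^0$ gap between $\widetilde{E}^c$ and $E^c_G$, to
$$\varphi_G(x):=\ln\|DG|_{E^c_G(x)}\|\;\leq\;\varphi(x)-\tfrac{\varepsilon}{2},\qquad\forall\,x\in M_0,$$
provided $\widetilde{E}^c$ has been chosen close enough to $E^c_F|_{M_0}$ (which is possible since $E^c_F|_{M_0}$ is continuous and $M_0$ is a smooth closed manifold).

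\medskip

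\noindent\emph{Step 4 (Trapping region).} Combining Steps 1 and 3, for every $x\in M_0$ and $n\geq 1$,
$$\ln\|DG^n|_{E^c_G(x)}\|=\sum_{i=0}^{n-1}\varphi_G(F_0^i(x))\leq\bigl[\psi(F_0^n(x))-\psi(x)\bigr]-\tfrac{n\varepsilon}{2}\leq 2C_\psi-\tfrac{n\varepsilon}{2}.$$
Thus $\|DG^n|_{E^c_G(x)}\|\to 0$ uniformly in $x\in M_0$. Fix $n_0$ with $e^{2C_\psi}e^{-n_0\varepsilon/2}<\tfrac{1}{2}$; by continuity of $DG^{n_0}$ and $E^c_G$, there is an open neighborhood $V_0$ of $M_0$ with $\|DG^{n_0}|_{E^c_G(y)}\|<1$ for every $y\in V_0$. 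Shrinking $V_0$ to a union of central-leaf segments emanating from $M_0$, the induced one-dimensional central dynamics of $G^{n_0}$ gives $\overline{G^{n_0}(V_0)}\subset V_0$ and $\bigcap_{k\geq 0}G^{n_0 k}(V_0)=M_0$. Setting $V:=V_0\cup G(V_0)\cup\cdots\cup G^{n_0-1}(V_0)$ produces a trapping neighborhood for $G$ itself, so $M_0$ is an attractor of $G$.

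\medskip

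\noindent\emph{Main obstacle.} The delicate point is Step~2--Step~3: the true center foliation $\mathcal{F}^c$ is only H\"older transversally, so one cannot contract along $\mathcal{F}^c$ smoothly; one must contract along a smooth approximation $\widetilde{\mathcal{F}}^c$ and then argue that the error transferred to $E^c_G$ on $M_0$ is dominated by the prescribed improvement $\varepsilon/2$. The openness of the partially hyperbolic splitting and the compactness of $M_0$ make this quantitative comparison possible, and the trade-off between $\varepsilon$ and $C^k$-smallness of $\phi$ controls whether $G$ stays inside the prescribed neighborhood $\mathcal{U}$.
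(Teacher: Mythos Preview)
Your overall strategy (compose with a fiberwise contraction near $M_0$, then use the Liv\v sic bound to force uniform central decay) matches the paper's, and your Steps~1 and~4 are fine---in fact Step~4, with the explicit trapping region, is more complete than the paper's rather terse ``it suffices that $\{\lambda^c_G\leq-\varepsilon\}$ has full measure for every ergodic measure.''

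The gap is in Step~3. The pointwise inequality $\varphi_G(x)\leq\varphi(x)-\varepsilon/2$ does not follow from your construction. The issue is that $\varphi_G(x)=\ln\|DG|_{E^c_G(x)}\|$ involves the bundle $E^c_G$, while $\varphi(x)=\ln\|DF|_{E^c_F(x)}\|$ involves $E^c_F$; what you actually get is
\[
\varphi_G(x)=\varphi(x)-\varepsilon+\bigl[c(F_0x)-c(x)\bigr],
\]
where $c$ measures the ratio between the ambient norm and the ``projection onto $\widetilde{E}^c$ along $TM_0$'' norm on $E^c_G$ versus $E^c_F$. This coboundary term is bounded but need not vanish pointwise, and you cannot force $\|c\|_\infty<\varepsilon/4$ by choosing $\widetilde{E}^c$ close to $E^c_F$: the distance $\mathrm{dist}(E^c_G,E^c_F)$ is in general only H\"older in $\|G-F\|_{C^1}=O(\varepsilon)$, hence possibly $\gg\varepsilon$. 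Fortunately this does not hurt Step~4: the coboundary telescopes, so $\sum_{i=0}^{n-1}\varphi_G(F_0^ix)\le 2C_\psi+2\|c\|_\infty-n\varepsilon$, and your trapping-region argument goes through with the larger constant.

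The paper bypasses this altogether by exploiting the product structure $M=N\times[0,1]$. It takes $\widetilde{E}^c$ to be the \emph{vertical} direction (so no approximation of $E^c_F$ is needed) and $\phi=\Phi_t$ the time-$t$ map of the vertical field $s(s-1)\partial_s$. Since both $F$ and $G=\Phi_t\circ F$ preserve $M_0$, their derivatives at points of $M_0$ are block upper-triangular in the splitting $T_xM_0\oplus(\text{vertical})$, and the $(2,2)$ entry satisfies $B_G(x)=\alpha_t B_F(x)$ with $\alpha_t=\Phi_t'(0)<1$. Using the ``vertical projection'' norm on $E^c_F$ and $E^c_G$, one gets the \emph{exact} identity $\|DG^n|_{E^c_G(x)}\|_*=\alpha_t^n\,\|DF^n|_{E^c_F(x)}\|_*$, hence $\lambda^c_G(x)=\lambda^c_F(x)+\ln\alpha_t$. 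This avoids any discussion of how close $E^c_G$ is to $E^c_F$ or $\widetilde{E}^c$; the simplification is exactly that approximating $E^c_F$ is unnecessary once you notice that \emph{any} smooth line field transverse to $M_0$ would do for the norm trick, and the vertical one is already there.
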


\begin{proof}
    By Corollary \ref{corollary: central Lyapunov exponent}, we have $\lambda^c(x) \leq 0$ whenever $\lambda^c(x)$ exists. 
    To show that $M_0$ is an attractor of $G$, it suffices to check that there exists a constant $\varepsilon > 0$ such that
    $$\{x\in M_0: \lambda^c_G(x) \leq - \varepsilon\}$$
    has full measure with respect to any ergodic measure of $G_0: M_0 \to M_0$.
    
    Let $X(s) := s(s - 1)$ be a $C^\infty$ vector field on $[0,\ 1]$ and $\Phi_t(s)$ be the $C^\infty$ flow generated by $X$. Then by abuse of notation we define
    $$\Phi_t(z,\ s) := (z,\ \Phi_t(s)): N\times[0,\ 1] \to N\times [0,\ 1].$$
    It follows that $\Phi_t$ converges to identity as $t\to 0 + 0$, in any $C^k$ topology.

    Now consider $G := \Phi_t\circ F$. Since $G$ acts in the same way as $F$ on $M_0$ and $M_1$, we have $G\in\mathrm{Diff}_\partial^k(M)$ and hence when $t > 0$ is sufficiently small, $G\in\mathrm{PHI}^k(M)$.

    Note that the partially hyperbolic splitting of $G$ restricted to $M_0$ has the same stable and unstable subbundles as $F$. We denote it by
    $$T_xM = E^s(x)\oplus E^c_G(x)\oplus E^u(x),\ \forall x\in M_0.$$

    Define $\alpha_t := \Phi'_t(0) \in (0,\ 1)$ and $\beta(z) := \frac{\partial F}{\partial s}(z, 0) > 0$. Then we have
    \begin{align*}
        &D\Phi_t(z, 0) = \left(
        \begin{array}{cc}
            I&0\\
            0&\alpha_t 
        \end{array}\right);\\
        &DF(z, 0) = \left(
        \begin{array}{cc}
            DF_0(z)&*\\
            0&\beta(z)
        \end{array}
        \right);\\
        &DG(z, 0) = \left(
        \begin{array}{cc}
            DF_0(z)&* \\
            0&\alpha_t\beta(z)
        \end{array}
        \right).
    \end{align*}
    Let $v^c = (*,\ 1)^T$ and $v^c_G = (*,\ 1)^T$ be vectors of $E^c(z,\ 0)$ and $E^c_G(z,\ 0)$ respectively. Then
    $$DF(z, 0)(v^c) = (*,\ \beta(z))^T \text{ and } DG(z, 0)(v^c_G) = (*,\ \alpha_t\beta(z))^T.$$
    Let $\|\cdot\|$ be the original induced norm in $E^c$ and $E^c_G$, and define
    \begin{align*}
        &\left\|(Z,\ S)^T\right\|_F = |S|,\ \forall (Z,\ S)^T\in E^c;\\
        &\left\|(Z,\ S)^T\right\|_G = |S|,\ \forall (Z,\ S)^T\in E^c_G.
    \end{align*}
    Since $E^c$ and $E^c_G$ are both transverse to $M_0$, which is compact, the norms $\|\cdot\|_F$ and $\|\cdot\|_G$ are both equivalent with $\|\cdot\|$. That is, there exists a constant $C \geq 1$ such that
    \begin{align*}
        C^{-1}\|\cdot\| \leq \|\cdot\|_F \leq C\|\cdot\|;\\
        C^{-1}\|\cdot\| \leq \|\cdot\|_G \leq C\|\cdot\|.
    \end{align*}
    Moreover, by the above discussion we have
    $$\|DF(z, 0)(v^c)\|_F = \beta(z)\|v^c\|_F \text{ and } \|DG(z, 0)(v^c_G)\|_G = \alpha_t\beta(z)\|v^c_G\|_G.$$
    As a result, for any $x\in M_0$,
    \begin{align*}
        &\|DG^n|_{E^c_G(x)}\|_G = \alpha_t^n\|DF^n|_{E^c(x)}\|_F, \text{ and hence}\\
        &C^{-2}\alpha_t^n\|DF^n|_{E^c(x)}\|\leq \|DG^n|_{E^c_G(x)}\|\leq C^2\alpha_t^n\|DF^n|_{E^c(x)}\|.
    \end{align*}
    
    By the classical \textbf{Oseledets Theorem}, for each ergodic measure $\mu$ of $F_0 = G_0: M_0 \to M_0$, there exists a $\mu$-full measure subset $B_\mu \subseteq M_0$, such that for any $x\in B_\mu$, the Lyapunov exponents of $F$ and $G$ at $x$ both exist. Consider
    $$B_0 := \bigcup_{\mu \text{ ergodic}}B_\mu,$$
    then $\mu(B_0) = 1$ for any ergodic measure $\mu$.
    
    Fix $x\in B_0$. We have
    \begin{align*}
        &\frac{1}{n}\ln\left\|DG^n|_{E^c_G(x)}\right\|\\
        =& \frac{1}{n}\left(\ln\left\|DG^n|_{E^c_G(x)}\right\| - \ln\left\|DF^n|_{E^c(x)}\right\|\right) + \frac{1}{n}\ln\left\|DF^n|_{E^c(x)}\right\|\\
        \in& \left[\frac{1}{n}\ln\left\|DF^n|_{E^c(x)}\right\| + \ln\alpha_t - \frac{2}{n}\ln C,\  \frac{1}{n}\ln\left\|DF^n|_{E^c(x)}\right\| + \ln\alpha_t + \frac{2}{n}\ln C\right].
    \end{align*}
    Let $n \to +\infty$ and we have
    $$\lambda^c_G(x) = \lambda^c(x) + \ln\alpha_t \leq \ln\alpha_t < 0,\ \forall x\in B_0.$$
    This completes the proof of Lemma \ref{lemma: perturbation for attractor}.
\end{proof}

\begin{proof}[Proof of Theorem \ref{theorem: RTT to BI}]
    Since $F$ is $C^k$-robustly transitive, there exists an open neighborhood $\mathcal{U}\subseteq\mathrm{PHI}^k(M)$ of $F$ such that every $G\in\mathcal{U}$ is topologically transitive.
    
    We first prove that there exist $p_0,\ q_0\in\mathrm{Per}(F_0)$ such that $\lambda^c(p_0) < 0 < \lambda^c(q_0)$. Similar conclusion holds for $F_1$. For contradiction, without loss of generality, we assume that
    $$\lambda^c(p)\leq 0,\ \forall p\in \mathrm{Per}(F_0).$$
    
    Then by Lemma \ref{lemma: perturbation for attractor},
    there exists a $C^k$-perturbation $G\in\mathcal{U}$ such that $M_0$ is an attractor of $G$, which contradicts the topological transitivity of $G$.
    
    Now take $p_0,\ q_0\in M_0$ and $p_1,\ q_1\in M_1$ such that
    $$\lambda^c(p_0) < 0 < \lambda^c(q_0) \text{ and } \lambda^c(q_1) < 0 < \lambda^c(p_1).$$
    It remains to show that $W^s(p_0)\cap W^u(p_1)\neq\varnothing$, and similarly, $W^u(p_1)\cap W^s(q_1)\neq\varnothing$.

    First notice that
    $$W^s(p_0) = \bigcup_{n\in\mathbb{N}}F^{-n}(\mathcal{F}^{cs}_{loc}(p_0)) \text{ and } W^u(p_1) = \bigcup_{n\in\mathbb{N}}F^n(\mathcal{F}^{cu}_{loc}(p_1)).$$
    Since $F$ is topologically transitive, there exists $x\in M$ such that
    $$\overline{\{F^i(x): i \geq i_0\}} = M,\ \forall i_0\in\mathbb{Z}.$$
    In particular, there exists $i_0\in\mathbb{Z}$ such that $F^{i_0}(x)\in\mathcal{F}^s_{\varepsilon_0}(\mathcal{F}^{cu}_{loc}(p_1))$ for some $\varepsilon_0 > 0$. Assume that $F^{i_0}(x)\in\mathcal{F}^s_{\varepsilon_0}(z_1)$, where $z_1\in\mathcal{F}^{cu}_{loc}(p_1)$.

    By the density of $\{F^i(x): i\geq i_0\}$, for any $\varepsilon > 0$, there exists $N\in\mathbb{N}$ such that $F^{i_0 + N}(x)\in\mathcal{F}^u_\varepsilon(\mathcal{F}^{cs}_\varepsilon(p_0))$. Moreover, for fixed $\varepsilon > 0$, $N$ can be arbitrarily large. By taking $\varepsilon > 0$ small, we may assume that
    $$\mathcal{F}^s_\varepsilon(F^{i_0 + N}(x))\subseteq \mathcal{F}^u_{\varepsilon_0}(\mathcal{F}^{cs}_{loc}(p_0)).$$
    Now take $N$ large so that $F^N(z_1)\in\mathcal{F}^s_\varepsilon(F^{i_0 + N}(x))$. It follows that $F^N(z_1)\in \mathcal{F}^u_{\varepsilon_0}(z_0)$ for some $z_0\in\mathcal{F}^{cs}_{loc}(p_0)$. Since $W^u(p_1)$ is $F$-invariant and $u$-saturated, we have $z_0\in W^u(p_1)\cap W^s(p_0)$. This completes the proof of Theorem \ref{theorem: RTT to BI}.
\end{proof}

\section{Intermingled Basins}\label{section: IB}
In this section, we prove Theorem \ref{theorem: main theorem B}. Recall that for $F\in\mathrm{PHI}_-^k(M)\ (k \geq 2)$, there are boundary SRB measures $\mu_0$ on $M_0$ and $\mu_1$ on $M_1$. For $i = 0,\ 1$, since $\mu_i$ is the unique SRB measure of $F_i: M_i \to M_i$, we have the following results.

\begin{itemize}
    \item $\mathrm{supp}(\mu_i) = M_i$;
    \item $B(\mu_i)\cap M_i$ has full Lebesgue measure in $M_i$;
    \item For any $u$-segment $\gamma^u\subseteq M_i$, $\gamma^u\cap B(\mu_i)$ has full Lebesgue measure in $\gamma^u$;
    \item There exists $L_i\subseteq M_i$ with positive Lebesgue measure such that $\lambda^c(x) < 0,\ \forall x\in L_i$, and consequently, $|W^s(x)\cap\mathcal{F}^c(x)| > 0,\ \forall x\in L_i$;
    \item There exists $K_i\subseteq L_i$ with positive Lebesgue measure such that
    $$\inf_{x\in K_i}|W^s(x)\cap \mathcal{F}^c(x)| > 0.$$
\end{itemize}

\begin{theorem}\label{theorem: TT to IB}
    If $F\in\mathrm{PHI}_-^2(M)$ is topologically transitive, then $F$ admits intermingled basins.
\end{theorem}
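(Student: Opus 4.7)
The plan is to establish three properties in sequence: (i) each boundary basin $B(\mu_i)$ has positive Lebesgue volume in $M$; (ii) each basin meets every non-empty open subset of $M$ in positive volume; (iii) the union $B(\mu_0)\cup B(\mu_1)$ has full Lebesgue volume in $M$. This is the standard roadmap for intermingled basins; the specific structure afforded by $\mathrm{PHI}_-^2(M)$ is what lets each step go through.

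For (i), I would use the set $K_i\subseteq M_i$ of positive Lebesgue measure with uniform lower bound $\eta>0$ on the length of the contracting central interval $W^s(x)\cap\mathcal{F}^c(x)$ (given in the bullet list). Since $B(\mu_i)\cap M_i$ has full Lebesgue measure in $M_i$, the intersection $K_i':=K_i\cap B(\mu_i)$ retains positive Lebesgue measure. The inflated set
$$\widetilde A_i := \bigcup_{x\in K_i'}\bigl(W^s(x)\cap\mathcal{F}^c(x)\bigr)$$
is contained in $B(\mu_i)$, because each of its points lies in $W^s(x)$ for some $x\in K_i'\subseteq B(\mu_i)$ and $B(\mu_i)$ is $s$-saturated via Birkhoff's argument along stable manifolds. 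Using Fubini with respect to the central foliation (whose leaves are $C^{1+}$ and uniformly transverse to $M_i$), the inflation $\widetilde A_i$ has positive Lebesgue volume in $M$.

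For (ii), I would combine topological transitivity with $s$-saturation of $B(\mu_i)$ and the absolute continuity of stable holonomies provided by Lemma \ref{lemma: C1-holonomy}. Pick a Lebesgue density point $p$ of $K_i'$ inside $M_i$; then the open tubular neighborhood $\mathcal{N}_p\subseteq M$, consisting of the central sub-intervals of height $\eta/2$ based at a small ball around $p$ in $M_i$, has Lebesgue density of $\widetilde A_i\subseteq B(\mu_i)$ arbitrarily close to $1$ as the base-ball shrinks. For any non-empty open $V\subseteq M$, topological transitivity of $F$ yields $N\in\mathbb{N}$ with $F^N(V)\cap\mathcal{N}_p$ a non-empty open set. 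A local Fubini argument along $\mathcal{F}^s$, exploiting $s$-saturation of $B(\mu_i)$ together with the product structure of $\mathcal{N}_p$ along the central foliation, shows that $\mathrm{vol}\bigl(F^N(V)\cap\mathcal{N}_p\cap B(\mu_i)\bigr)>0$. Since $B(\mu_i)$ is $F$-invariant, pulling back by $F^{-N}$ gives $\mathrm{vol}(V\cap B(\mu_i))>0$.

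For (iii), the dichotomy of the one-dimensional central dynamics applies: both boundary SRB measures have negative central Lyapunov exponent, so a Pesin-theoretic argument combined with the uniqueness of SRB on each nilmanifold $M_i$ (as an Anosov diffeomorphism) forces any Lebesgue-generic Birkhoff average to equal $\mu_0$ or $\mu_1$. The main obstacle is step (ii): converting the high density of $\widetilde A_i$ in the large set $\mathcal{N}_p$ into positive Lebesgue density inside a potentially much smaller open subset $F^N(V)\cap\mathcal{N}_p$ requires delicate bookkeeping. Transitivity alone moves points but not volume, so the $s$-saturation of $B(\mu_i)$, the product structure of $\mathcal{N}_p$ along the central foliation, and the absolute continuity of stable holonomies must be combined uniformly so that the positive volume carried by $\widetilde A_i$ cannot be concentrated entirely away from $F^N(V)$; this uniform control is the technical heart of the proof.
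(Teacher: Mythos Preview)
Your step (i) is fine and matches how the paper uses the sets $K_i$.

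Your step (ii), however, has a genuine gap that you yourself flag but do not close. Moving an arbitrary open set $V$ forward so that $F^N(V)$ meets the tubular neighborhood $\mathcal{N}_p$ gives no control over the shape or size of $F^N(V)\cap\mathcal{N}_p$; even if $\widetilde A_i$ has density close to $1$ in $\mathcal{N}_p$, a small open subset can miss it entirely, and $s$-saturation of $B(\mu_i)$ does not automatically propagate $\widetilde A_i$ into that small set. The paper avoids this difficulty by reversing the roles: it fixes a $cu$-box $\mathcal{B}$ inside the given open set and iterates $\mathcal{B}$ \emph{forward}. Forward iteration of a $cu$-box has controlled geometry (it stretches along $\mathcal{F}^u$ while remaining a $cu$-box), and density of $\bigcup_n F^n(\mathcal{B})$ lets one select a sub-$cu$-box $\mathcal{P}\subseteq F^N(\mathcal{B})$ so close to $M_0$ that every point of $\mathcal{P}$ whose $\pi_0$-projection lies in $K_0$ is already in $W^s$ of that projection. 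Since $\pi_0(\mathcal{P})\cap K_0$ has positive measure and $B(\mu_0)\cap M_0$ has full measure, $\mathcal{P}\cap B(\mu_0)$ has positive Lebesgue measure in $\mathcal{P}$; pulling back by $F^{-N}$ gives the same for $\mathcal{B}$. The uniformity for $cu$-boxes $C^1$-close to $\mathcal{B}$ then follows from $s$-saturation and Lemma~\ref{lemma: C1-holonomy}. This $cu$-box mechanism is the missing ingredient in your argument.

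Your step (iii) is also a gap. The assertion that ``any Lebesgue-generic Birkhoff average must equal $\mu_0$ or $\mu_1$'' presupposes that $\mu_0$ and $\mu_1$ are the only SRB measures of $F$, which you have not established and which is not obvious (a priori there could be further SRB measures supported in the interior). The paper does not argue this way at all. Instead, it argues by contradiction: if the complement $\mathcal{C}$ of $B(\mu_0)\cup B(\mu_1)$ had positive volume, take a $cu$-density point $x^*$ of $\mathcal{C}$ and iterate forward. Bounded distortion of $\det(DF^{N_k}|_{E^{cu}})$ on small $cu$-disks pushes the density of $\mathcal{C}$ in a sequence of uniformly sized $cu$-boxes $\mathcal{P}_k$ near $M_0$ toward $1$, hence the density of $B(\mu_0)$ in $\mathcal{P}_k$ toward $0$. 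Passing to a limit $cu$-box $\mathcal{P}$ contradicts the uniform lower bound from the $cu$-box claim above. You should replace the Pesin hand-wave with this density-point/bounded-distortion argument.
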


\begin{proof}[Proof of Theorem \ref{theorem: TT to IB}]
    We first prove the following claim.
    \begin{claim}\label{claim: basin in cu box}
        For any $cu$-box $\mathcal{B}$, $\mathcal{B}\cap B(\mu_0)$ has positive Lebesgue measure in $\mathcal{B}$, and further, for any $cu$-box $\mathcal{B}'$ which is $C^1$-close to $\mathcal{B}$, 
    $$\mathrm{Leb}(\mathcal{B}'\cap B(\mu_0)) \geq \frac{1}{2}\mathrm{Leb}(\mathcal{B}\cap B(\mu_0)).$$
    \end{claim}
    
    \begin{proof}[Proof of Claim \ref{claim: basin in cu box}]
        Consider the open set $U = \mathcal{F}^s_\varepsilon(\mathcal{B})$. Since $F$ is topologically transitive, given any non-empty open set $V$, $F^N(U)\cap V\neq\varnothing$ for infinitely many $N\in\mathbb{N}$. It follows that $\cup_{n\in\mathbb{N}}F^n(\mathcal{B})$ is dense in $M$. In particular, there exists $N\in\mathbb{N}$ and a $cu$-box $\mathcal{P} \subseteq F^N(\mathcal{B})$ sufficiently close to $M_0$, such that $\pi_0(\mathcal{P})\cap K_0$ has positive Lebesgue measure in $\pi_0(\mathcal{P})$, and $x\in W^s(\pi_0(x))$ for any $x\in\mathcal{P}$ with $\pi_0(x)\in K_0$. On the other hand, $\pi_0(\mathcal{P})\cap B(\mu_0)$ has full Lebesgue measure in $\pi_0(\mathcal{P})$. It follows that $\mathcal{P}\cap B(\mu_0)$ has positive Lebesgue measure in $\mathcal{P}$, and hence $\mathcal{B}\cap B(\mu_0)$ also has positive Lebesgue measure in $\mathcal{B}$. Moreover, $B(\mu_0)$ is $s$-saturated, hence for any $cu$-box $\mathcal{B}'$ which is $C^1$ close to $\mathcal{B}$, $\mathrm{Leb}(\mathcal{B}'\cap B(\mu_0))$ has a uniform lower bound away from zero, given as above.
    \end{proof}
    Similar result holds for $B(\mu_1)$. Therefore, we conclude that for any non-empty open subset $U\subseteq M$, $U\cap B(\mu_0)$ and $U\cap B(\mu_1)$ both have positive Lebesgue measure.

    To show that $B(\mu_0)\cup B(\mu_1)$ has full Lebesgue measure, assume for contradiction that the complement $\mathcal{C}$ has positive Lebesgue measure. Take a density point $x^*\in M$ of $\mathcal{C}$. Since the basins are $s$-saturated, we have
    $$\frac{\mathrm{Leb}\left(\mathcal{C} \cap \mathcal{F}_\varepsilon^{cu}(x^*)\right)}{\mathrm{Leb}\left(\mathcal{F}_\varepsilon^{cu}(x^*)\right)} \to 1 \text{ as } \varepsilon \to 0.$$

    By the above argument, there exists $N_k \to +\infty$ and a $cu$-box
    $\mathcal{P}_k \subseteq F^{N_k}(\mathcal{F}_{\varepsilon_0}^{cu}(x^*))$ near $M_0$, with the same size, such that $\mathcal{P}_k\cap B(\mu_0)$ consists of central leaves. Moreover, we have that $F^{-N_k}(\mathcal{P}_k)\subseteq \mathcal{F}^c_{\varepsilon_0}(\mathcal{F}^u_{\varepsilon_k}(x^*))$, where $\varepsilon_k \to 0 + 0$. Define
    $$\mathcal{P}^*_k = F^{N_k}\left(F^{-N_k}(\mathcal{P}_k)\cap\mathcal{F}^{cu}_{\varepsilon_k}(x^*)\right).$$
    Since for each $k\in\mathbb{N}$, $\det (DF^{N_k}|_{E^{cu}})$ is nearly constant on $\mathcal{F}^{cu}_{\varepsilon_0}(x^*)$ when $\varepsilon_0$ is small, we conclude that
    $$\frac{\mathrm{Leb}(\mathcal{C}\cap \mathcal{P}^*_k)}{\mathrm{Leb}(\mathcal{P}^*_k)} \to 1 \text{ as } k \to +\infty,$$
    and consequently,
    $$\frac{\mathrm{Leb}(B(\mu_0)\cap\mathcal{P}^*_k)}{\mathrm{Leb}(\mathcal{P}^*_k)} \to 0 \text{ as } k \to +\infty.$$
    Now that $\mathcal{P}_k\cap B(\mu_0)$ consists of central leaves, we also have
    $$\frac{\mathrm{Leb}(B(\mu_0)\cap\mathcal{P}_k)}{\mathrm{Leb}(\mathcal{P}_k)} \to 0 \text{ as } k \to +\infty.$$
    By taking a subsequence, we may assume that $\mathcal{P}_k$ converges to a $cu$-box $\mathcal{P}$. This contradicts the fact that a $cu$-box near $\mathcal{P}$ intersects $B(\mu_0)$ on a subset whose Lebesgue measure has a lower bound away from zero, see Claim \ref{claim: basin in cu box}.
\end{proof}

\begin{theorem}\label{theorem: RIB to BI}
    If $F\in\mathrm{PHI}_-^k(M)\ (k \geq 2)$ admits $C^k$-robustly intermingled basins, then $F$ is boundary interconnected.
\end{theorem}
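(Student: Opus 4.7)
The plan is to verify the four ingredients in Definition~\ref{definition: BI}: periodic points $p_0\in\mathrm{Per}(F_0)$ and $q_1\in\mathrm{Per}(F_1)$ with negative central Lyapunov exponent, periodic points $q_0\in\mathrm{Per}(F_0)$ and $p_1\in\mathrm{Per}(F_1)$ with positive central Lyapunov exponent, and the two heteroclinic intersections $W^s(p_0)\cap W^u(p_1)\neq\varnothing$ and $W^s(q_1)\cap W^u(q_0)\neq\varnothing$. The existence of $p_0$ and $q_1$ is immediate from $F\in\mathrm{PHI}_-^k(M)$: since $F_i$ is Anosov on a nilmanifold, periodic measures are weak-$*$ dense in the invariant measures of $F_i$, and approximating $\mu_i$ by periodic measures $\nu_n$ forces $\int\ln\|DF|_{E^c}\|\,d\nu_n<0$ for large $n$, producing periodic points with negative central exponent.

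For the existence of $q_0\in\mathrm{Per}(F_0)$ with $\lambda^c(q_0)>0$, I would argue by contradiction. Suppose $\lambda^c(p)\leq 0$ for every $p\in\mathrm{Per}(F_0)$. Apply Lemma~\ref{lemma: perturbation for attractor} with the RIB-neighborhood $\mathcal{U}\subseteq\mathrm{PHI}_-^k(M)$ of $F$: this produces $G:=\Phi_t\circ F$ for arbitrarily small $t>0$ for which $M_0$ is an attractor, where $\Phi_t$ is the time-$t$ flow of $X(s)=s(s-1)$. Because $\Phi_t'(0)=e^{-t}$ and $\Phi_t'(1)=e^t$, a direct computation of central derivatives on the two boundaries yields
\[
\int_{M_0}\lambda^c_G\,d\mu_0=\int_{M_0}\lambda^c_F\,d\mu_0-t<0\quad\text{and}\quad\int_{M_1}\lambda^c_G\,d\mu_1=\int_{M_1}\lambda^c_F\,d\mu_1+t<0
\]
for sufficiently small $t$, so $G\in\mathrm{PHI}_-^k(M)\cap\mathcal{U}$ must admit intermingled basins. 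But $M_0$ being an attractor of $G$ produces an open basin of attraction contained (modulo a zero-measure set) in $B_G(\mu_0)$, on which $B_G(\mu_1)$ has zero Lebesgue measure, contradicting intermingled basins. The symmetric construction using the flow of $Y(s)=s(1-s)$, which pushes toward $s=1$, yields $p_1\in\mathrm{Per}(F_1)$ with $\lambda^c(p_1)>0$.

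For the heteroclinic intersection $W^s(p_0)\cap W^u(p_1)\neq\varnothing$, I would mimic the construction at the end of the proof of Theorem~\ref{theorem: RTT to BI}, with density of the basin $B(\mu_0)$ replacing the dense orbit used there. Shrink $\mathcal{F}^{cu}_{loc}(p_1)$ so that $\mathcal{F}^{cu}_{loc}(p_1)\subseteq W^u(p_1)$ (valid since $\lambda^c(p_1)>0$), and pick $y\in B(\mu_0)\cap\mathcal{F}^s_{\varepsilon_0}(z_1)$ for some $z_1\in\mathcal{F}^{cu}_{loc}(p_1)$, using that $B(\mu_0)$ has positive Lebesgue measure in every non-empty open set. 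The orbit of $y$ is Birkhoff-generic for $\mu_0$, and since $\mathrm{supp}(\mu_0)=M_0\ni p_0$, every open neighborhood of $p_0$ receives positive $\mu_0$-mass; hence there exist arbitrarily large $N$ with $F^N(y)\in\mathcal{F}^u_\varepsilon(z_0')$ for some $z_0'\in\mathcal{F}^{cs}_\varepsilon(p_0)\subseteq W^s(p_0)$. Stable contraction forces $F^N(z_1)$ to be close to $F^N(y)$ in the stable direction, so $F^N(z_1)$ lies in a local product neighborhood of $z_0'$, and the local product structure yields $z_0:=\mathcal{F}^u_{loc}(F^N(z_1))\cap\mathcal{F}^{cs}_{loc}(p_0)\in W^s(p_0)\cap W^u(p_1)$. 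The symmetric intersection $W^s(q_1)\cap W^u(q_0)\neq\varnothing$ follows from a point $y'\in B(\mu_1)\cap\mathcal{F}^s_{\varepsilon_0}(\mathcal{F}^{cu}_{loc}(q_0))$ in the same way.

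The main obstacle I anticipate is the perturbation step: one must carefully arrange that the perturbation of Lemma~\ref{lemma: perturbation for attractor} remains inside $\mathrm{PHI}_-^k(M)$ (so that the RIB hypothesis genuinely applies), and then rigorously verify that an attracting $M_0$ is incompatible with intermingled basins via the open basin of attraction argument above. The heteroclinic step is a relatively direct adaptation of the construction in Section~\ref{section: RTT to BI}, exploiting that both a dense forward orbit and a basin point $y\in B(\mu_0)$ equidistribute toward $\mu_0$ through its full support on $M_0$.
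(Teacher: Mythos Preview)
Your proposal is correct and follows essentially the same approach as the paper: use Lemma~\ref{lemma: perturbation for attractor} to rule out $\lambda^c\le 0$ on all of $\mathrm{Per}(F_0)$ (resp.\ $\mathrm{Per}(F_1)$) by creating an attracting boundary incompatible with intermingled basins, then use a basin point near $\mathcal{F}^{cu}_{loc}(p_1)$ whose forward orbit accumulates on $\mathrm{supp}(\mu_0)=M_0\ni p_0$ to produce the heteroclinic intersection. Two minor comparative remarks: the paper handles ``$G\in\mathrm{PHI}_-^k(M)$'' implicitly (since $\mathrm{PHI}_-^k(M)$ is open in $\mathrm{PHI}^k(M)$, the RIB-neighborhood $\mathcal{U}$ is already open in $\mathrm{PHI}^k(M)$, so Lemma~\ref{lemma: perturbation for attractor} applies directly), whereas you verify it by computing the shift of the boundary central exponents---both are fine; and for the heteroclinic step the paper works with $B(\mu_0)\cap\mathcal{F}^{cu}_{loc}(p_1)$ having positive leaf-Lebesgue measure and forward-iterates the $cu$-box itself, while you pick a nearby $y$ in an open $s$-tube and transfer to $z_1\in W^u(p_1)$ via stable contraction, which is the same mechanism phrased pointwise.
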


\begin{proof}[Proof of Theorem \ref{theorem: RIB to BI}]
    Since $F$ admits $C^k$-robustly intermingled basins, there exists an open neighborhood $\mathcal{U}\subseteq \mathrm{PHI}_-^k(M)$ of $F$, such that every $G\in\mathcal{U}$ admits intermingled basins.
    
    We first show that there exist $q_0\in \mathrm{Per}(F_0)$ and $p_1\in\mathrm{Per}(M_1)$ admitting positive central Lyapunov exponents.

    Assume for contradiction that $\lambda^c(p) \leq 0$, $\forall p\in\mathrm{Per}(F_0)$. By Lemma \ref{lemma: perturbation for attractor}, there exists $G\in\mathcal{U}$ such that $M_0$ is an attractor of $G$. Moreover, $G$ acts in the same way as $F$ on $M_0$ and $M_1$, hence the boundary SRB measures of $G$ are also $\mu_0$ and $\mu_1$. In this case, $B_G(\mu_1)$ cannot be dense near $M_0$, contradicting the fact that $G$ admits intermingled basins.

    Now we have $q_0\in\mathrm{Per}(F_0)$ and $p_1\in\mathrm{Per}(F_1)$ with positive central Lyapunov exponents. Fix $p_0\in\mathrm{Per}(F_0)$ and $q_1\in\mathrm{Per}(F_0)$ with negative central Lyapunov exponents. Since $F$ admits intermingled basins, $B(\mu_0)\cap \mathcal{F}^{cu}_{loc}(p_1)$ has positive Lebesgue measure in $\mathcal{F}^{cu}_{loc}(p_1)$. By forward iteration, we know that there exists a $cu$-box $\mathcal{P}\subseteq W^u(p_1)$ near $p_0$, since $\mathrm{supp}(\mu_0) = M_0$. Therefore, $\mathcal{P}\pitchfork \mathcal{F}^{cs}_{loc}(p_0)\neq\varnothing$, and hence $W^u(p_1)\pitchfork W^s(p_0)\neq\varnothing$. Similar argument shows that $W^s(q_1)\pitchfork W^u(q_0)\neq\varnothing$. Hence, $F$ is boundary interconnected.
\end{proof}

\begin{proof}[Proof of Theorem \ref{theorem: main theorem B}]
    Theorem \ref{theorem: TT to IB} shows that $(2) \implies (1)$; Theorem \ref{theorem: RIB to BI} shows that $(1) \implies (3)$. Finally, Corollary \ref{corollary: C1 and C2} shows that $(3) \implies (2)$.
\end{proof}

\section*{Acknowledgement}
The third author is very grateful to Sichuan University for the hospitality during his visits when part of the work was done. Y. Shi was partially supported by the National Key R$\&$D Program of China (2021YFA1001900) and the NSFC (12571203); M. Xia was partially supported by the Fundamental Research Funds for the Central Universities (DUT24RC(3)112).
    
\bibliographystyle{amsalpha}
\bibliography{PHI}

\end{CJK}
\end{document}